 \newtheorem{thm}{Theorem}[section]
 \newtheorem{cor}[thm]{Corollary}
 \newtheorem{lem}[thm]{Lemma}
 \newtheorem{prop}[thm]{Proposition}
 \theoremstyle{definition}
 \theoremstyle{remark}
 \newtheorem{rem}[thm]{Remark}
 \numberwithin{equation}{section}
\begin{document}

%
%
%
%
%
%
%
%
%

\title[Sobolev inequalities on homogeneous groups]
 {Sobolev type inequalities, Euler-Hilbert-So\-bo\-lev and Sobolev-Lorentz-Zygmund spaces on homogeneous groups}

\author[M. Ruzhansky]{Michael Ruzhansky}

\address{%
 Department of Mathematics\\
Imperial College London\\
180 Queen's Gate, London SW7 2AZ\\
United Kingdom}

\email{m.ruzhansky@imperial.ac.uk}

\thanks{The authors were supported in parts by the EPSRC
 grants EP/K039407/1 and EP/R003025/1, and by the Leverhulme Grants RPG-2014-02 and RPG-2017-151. The second author was also supported by the MESRF No 02.a03.21.0008 and the MESRK grant AP05130981. The third author was supported by the MESRK grant AP05133271. No new data was collected or generated during the course of research.}
\author[D. Suragan]{Durvudkhan Suragan}
\address{%
 Institute of Mathematics and Mathematical Modelling\\
125 Pushkin str.\\
050010 Almaty\\
Kazakhstan\\
and\\ RUDN University\\
6 Miklukho-Maklay str., Moscow 117198\\
Russia}
\email{suragan@math.kz}

\author[N. Yessirkegenov]{Nurgissa Yessirkegenov}

\address{%
Institute of Mathematics and Mathematical Modelling\\
125 Pushkin str.\\
050010 Almaty\\
Kazakhstan\\
and\\
 Department of Mathematics\\
Imperial College London\\
180 Queen's Gate, London SW7 2AZ\\
United Kingdom}

\email{n.yessirkegenov15@imperial.ac.uk}
\subjclass{Primary 22E30; Secondary 46E35}

\keywords{Sobolev inequality, Hardy inequality, weighted Sobolev inequality, Rellich inequality, Euler-Hilbert-Sobolev space, Sobolev-Lo\-rentz-Zygmund space, homogeneous Lie group}

\date{January 17, 2018}

\begin{abstract}
We define Euler-Hilbert-Sobolev spaces and obtain embedding results on homogeneous groups using Euler operators, which are homogeneous differential operators of order zero. Sharp remainder terms of $L^{p}$ and weighted Sobolev type and Sobolev-Rellich inequalities on homogeneous groups are given. Most inequalities are obtained with best constants. As consequences,
     we obtain analogues of the generalised classical
     Sobolev type and Sobolev-Rellich inequalities. We also discuss applications of logarithmic Hardy inequalities to Sobolev-Lorentz-Zygmund spaces.
     The obtained results are new already in the anisotropic $\mathbb R^{n}$ as well as in the isotropic $\mathbb R^{n}$ due to the freedom in the choice of any homogeneous quasi-norm.
\end{abstract}

\maketitle
\tableofcontents
\section{Introduction}

In this paper we are interested in Hardy, Poincar\'{e}, Sobolev,  Rellich and higher order inequalities of Sobolev-Rellich type in the setting of general homogeneous groups. Furthermore, we are interested in questions of best constants, their attainability, and sharp expressions for the remainders. As consequences, we define Euler-Hilbert-Sobolev and Sobolev-Lorentz-Zygmund spaces on homogeneous groups. For a short review in this direction and some further discussions we refer to recent papers \cite{Ruzhansky-Suragan:critical, Ruzhansky-Suragan:Layers, Ruzhansky-Suragan:Hardy, Ruzhansky-Suragan:identities, Ruzhansky-Suragan:uncertainty} and \cite{ORS16} as well as to references therein. In the case of $\mathbb{R}^{n}$ expressions for the remainder terms in Hardy and Rellich inequalities have been recently analysed in \cite{MOW:Hardy-Hayashi, IIO, IIO:Lp-Hardy, MOW-Rellich}.


The obtained results yield new statements already in the Euclidean setting of $\mathbb{R}^{n}$ but when we are working with anisotropic differential structures. Moreover, even in the isotropic situation in $\mathbb{R}^{n}$, the novelty of the obtained results is also in the arbitrariness of the choice of any homogeneous quasi-norm, since most of the inequalities are obtained with best constants. In this situation, the very convenient framework for working with a given dilation structure is that of homogeneous groups as was initiated in the book \cite{FS-Hardy} of Folland and Stein.
One of the ideas there is a distillation of results of harmonic analysis depending only on the group and dilation structures, and we believe that this paper contributes to this direction.  Our results hold on general anisotropic $\mathbb{R}^{n}$,
including the Heisenberg group and other nilpotent groups, where not all the Euclidean structures are available and one has to find alternative ways. Moreover, as we mentioned, they show that some inequalities are independent on the dilation structure and on the norm one takes on  $\mathbb{R}^{n}$. To the best of our knowledge, all the known results rely on the fact that the appearing norm is the Euclidean one.
In the technical part, the present paper relies on the extensive use of the Euler operator and this is one of our main ideas, to use it in proving such inequalities beyond $\mathbb{R}^{n}$.

For the convenience of the reader let us summarise the obtained results.
Let $\mathbb{G}$ be a homogeneous group of homogeneous dimension $Q$ and let us fix any homogeneous quasi-norm $|\cdot|$. Then we prove the following results:
\begin{itemize}
\item
Let $f\in C_{0}^{\infty}(\mathbb{G}\backslash\{0\})$ be a real-valued function and let $1<p<\infty$. Then we have the following identity:
\begin{equation}\label{iLH2}
\left\|\frac{p}{Q}\mathbb{E} f\right\|^{p}_{L^{p}(\mathbb{G})}
-\|f\|^{p}_{L^{p}(\mathbb{G})}=
p\int_{\mathbb{G}}I_{p}\left(f,-\frac{p}{Q}\mathbb{E} f\right)\left|f+\frac{p}{Q}\mathbb{E} f\right|^{2}dx,
\end{equation}
where $I_{p}$ is given by
$$
I_{p}(h,g)=(p-1)\int_{0}^{1}|\xi h+(1-\xi)g|^{p-2}\xi d\xi
$$
and the Euler type operator
$$\mathbb{E} =|x|\frac{d}{d|x|}$$
is defined in \eqref{EQ:def-Euler}. It can be described by the property that if $f:\mathbb G\backslash \{0\}\to\mathbb R$ is differentiable, then
$ \mathbb{E}(f)=\nu f$
if and only if a function $f$ is positively homogeneous of order $\nu$.
\item
Using that $I_{p}\geq 0$, identity \eqref{iLH2} implies the generalised $L^{p}$-Sobolev
inequality
\begin{equation}\label{iLpSoboleveq}
\|f\|_{L^{p}(\mathbb{G})}\leq\frac{p}{Q}\left\|\mathbb{E} f\right\|_{L^{p}(\mathbb{G})},
\quad 1<p<\infty,
\end{equation}
for all real-valued functions $f\in C_{0}^{\infty}(\mathbb{G}\backslash\{0\}).$
Inequality \eqref{iLpSoboleveq} is also true for complex-valued functions. In the case $p=2$ and $Q\geq3$, the inequality \eqref{iLpSoboleveq} for any complex-valued $f\in C_{0}^{\infty}(\mathbb{G}\backslash\{0\})$ is equivalent to Hardy's inequality for any complex-valued $g\in C_{0}^{\infty}(\mathbb{G}\backslash\{0\})$:
\begin{equation}\label{Hardy}
\left\|\frac{g}{|x|}\right\|_{L^{2}(\mathbb{G})}\leq\frac{2}{Q-2}\left\|\mathcal{R} g\right\|_{L^{2}(\mathbb{G})}=\frac{2}{Q-2}\left\|\frac{1}{|x|}\mathbb{E} g\right\|_{L^{2}(\mathbb{G})},
\end{equation}
where $\mathbb{E}:=|x| \mathcal{R}$, and $\mathcal{R}$ is the radial derivative operator on $\mathbb{G}$ (see \eqref{dfdr} for the precise expression).
In the case $1<p<Q$, the inequality \eqref{iLpSoboleveq} for any complex-valued $f\in C_{0}^{\infty}(\mathbb{G}\backslash\{0\})$ implies Hardy's inequality:
\begin{equation}\label{Hardy_p}
\left\|\frac{f}{|x|}\right\|_{L^{p}(\mathbb{G})}\leq\frac{p}{Q-p}\left\|\mathcal{R} f\right\|_{L^{p}(\mathbb{G})}=\frac{p}{Q-p}\left\|\frac{1}{|x|}\mathbb{E} f\right\|_{L^{p}(\mathbb{G})}.
\end{equation}
\item
For any complex-valued function $f\in C^{\infty}_{0}(\mathbb{G}\backslash\{0\})$  and any $\alpha\in\mathbb{R}$ we have the following weighted identity:
\begin{multline}\label{iawS}
\qquad \left\|\frac{1}{|x|^{\alpha}}\mathbb{E} f\right\|^{2}_{L^{2}(\mathbb{G})}=
\left(\frac{Q}{2}-\alpha\right)^{2}\left\|\frac{f}{|x|^{\alpha}}
\right\|^{2}_{L^{2}(\mathbb{G})}\\
+\left\|\frac{1}{|x|^{\alpha}}\mathbb{E} f+\frac{Q-2\alpha}{2|x|^{\alpha}}f
\right\|^{2}_{L^{2}(\mathbb{G})}.
\end{multline}
Identity \eqref{iawS} implies several different estimates. For example, for
$\alpha=1$ we get the following generalised weighted Sobolev
inequality:
\begin{equation}\label{awSoboleveq}
\left\|\frac{f}{|x|}
\right\|_{L^{2}(\mathbb{G})}\leq
\frac{2}{Q-2}\left\|\frac{1}{|x|}\mathbb{E} f\right\|_{L^{2}(\mathbb{G})},\quad Q\geq 3,
\end{equation}
for all complex-valued functions $f\in C_{0}^{\infty}(\mathbb{G}\backslash\{0\})$.
For every $\alpha\in\mathbb{R}$ for which $Q-2\alpha\neq0$, since the last term in \eqref{iawS} in non-negative, we obtain
\begin{equation}\label{awSoboleveq-g0}
\qquad \left\|\frac{f}{|x|^{\alpha}}\right\|_{L^{2}(\mathbb{G})}\leq
\frac{2}{|Q-2\alpha|}
\left\|\frac{1}{|x|^{\alpha}}\mathbb{E} f\right\|_{L^{2}(\mathbb{G})},
\end{equation}
with sharp constant, see
Corollary \ref{waSobolev}.
\item
Let $1<p<\infty$, $k\in\mathbb{N}$ and $\alpha\in\mathbb{R}$ be such that $Q\neq \alpha p$. Then for any complex-valued function $f\in C^{\infty}_{0}(\mathbb{G}\backslash\{0\})$ we have weighted higher order $L^{p}$-Sobolev type inequality:
\begin{equation}
\left\|\frac{f}{|x|^{\alpha}}\right\|_{L^{p}(\mathbb{G})}\leq
\left|\frac{p}{Q-\alpha p}\right|^{k}\left\|\frac{1}{|x|^{\alpha}}\mathbb{E}^{k}f\right\|_{L^{p}(\mathbb{G})}.
\end{equation}
In the case $p=2$ an interesting feature is that we also obtain the exact formula for the remainder which yields the sharpness of the constants as well. For $\alpha\in\mathbb{R}$ and $Q-2\alpha\neq 0$ we have
the higher order Sobolev-Rellich type inequalities:
\begin{equation}\label{awSoboleveq-high-R2}
\left\|\frac{f}{|x|^{\alpha}}
\right\|_{L^{2}(\mathbb{G})}\leq
\left(\frac{2}{|Q-2\alpha|}\right)^{k}
\left\|\frac{1}{|x|^{\alpha}}\mathbb{E}^{k}f\right\|_{L^{2}(\mathbb{G})},
\end{equation}
for all complex-valued functions $f\in C_{0}^{\infty}(\mathbb{G}\backslash\{0\})$.
The constant in the right-hand side of \eqref{awSoboleveq-high-R2} is sharp, and is attained if and only if $f=0$.
Moreover, for all $k\in\mathbb N$ and $\alpha\in\mathbb{R}$ there is an explicit formula for the remainder:
\begin{multline}\label{equality-high-rem0}
\qquad \left\|\frac{1}{|x|^{\alpha}}\mathbb{E}^{k}f\right\|^{2}_{L^{2}(\mathbb{G})}=
\left(\frac{Q-2\alpha}{2}\right)^{2k}\left\|\frac{f}{|x|^{\alpha}}
\right\|^{2}_{L^{2}(\mathbb{G})}\\
+\sum_{m=1}^{k}\left(\frac{Q-2\alpha}{2}\right)^{2k-2m}
\left\|\frac{1}{|x|^{\alpha}}\mathbb{E}^{m}f+ \frac{Q-2\alpha}{2|x|^{\alpha}}\mathbb{E}^{m-1}f\right\|^{2}_{L^{2}(\mathbb{G})}.
\end{multline}
\item
The semi-normed space $(\mathfrak{L}^{k,p}(\mathbb{G}),\|\cdot\|_{\mathfrak{L}^{k,p}(\mathbb{G})})$, $k\in \mathbb{Z}$, equipped with a semi-norm $\|f\|_{\mathfrak{L}^{k,p}(\mathbb{G})}:=\|\mathbb{E}^{k} f\|_{L^{p}(\mathbb{G})}$ is a complete space. The norm of the embedding operator $\iota : (\mathfrak{L}^{k,p}(\mathbb{G}), \|\cdot\|_{\mathfrak{L}^{k,p}(\mathbb{G})})\hookrightarrow (L^{p}(\mathbb{G}), \|\cdot\|_{L^{p}(\mathbb{G})})$ for $k\in \mathbb{N}$ satisfies
\begin{equation}
\|\iota\|_{\mathfrak{L}^{k,p}(\mathbb{G})\rightarrow L^{p}(\mathbb{G})}\leq\left(\frac{p}{Q}\right)^k, \;1<p<\infty,
\end{equation}
where we understand the embedding $\iota$ as an embedding of semi-normed subspace of $L^{p}(\mathbb{G})$.

\item
Let $|\mathbb{E}|=(\mathbb{E}\mathbb{E}^{*})^{\frac{1}{2}}$. Then the semi-normed space $(\mathbb{H}^{\beta}(\mathbb{G}),\|\cdot\|_{\mathbb{H}^{\beta}(\mathbb{G})})$, $\beta\in \mathbb{C},$ equipped with a semi-norm $\|f\|_{\mathbb{H}^{\beta}(\mathbb{G})}:=\|\mathbb{|E|}^{\beta} f\|_{L^{2}(\mathbb{G})}$ is a complete space. The norm of the embedding operator $\iota : (\mathbb{H}^{\beta}(\mathbb{G}), \|\cdot\|_{\mathbb{H}^{\beta}(\mathbb{G})})\hookrightarrow (L^{2}(\mathbb{G}), \|\cdot\|_{L^{2}(\mathbb{G})})$ satisfies
\begin{equation}
\|\iota\|_{\mathbb{H}^{\beta}(\mathbb{G})\rightarrow L^{2}(\mathbb{G})}\leq C\left(k-\frac{\beta}{2},k\right)\left(\frac{2}{Q}\right)^{{\rm Re} \beta }, \; \beta\in \mathbb{C_{+}}, \;k>\frac{{\rm Re} \beta}{2}, \;k\in \mathbb{N},
\end{equation}
where we understand the embedding $\iota$ as an embedding of semi-normed subspace of $L^{2}(\mathbb{G})$, and with
$$C(\beta,k)=\frac{\Gamma(k+1)}{|\Gamma(\beta)\Gamma(k-\beta)|}\frac{2^{k-{\rm Re}\beta}}{{\rm Re}\beta (k-{\rm Re}\beta)}.
$$
\item
Let $\Omega$ be a bounded open subset of $\mathbb{G}$. If $1<p<\infty, \;f\in \widehat{\mathfrak{L}}_{0}^{1,p}(\Omega)$ and $\mathcal{R}f\equiv\frac{1}{|x|}\mathbb{E}f\in L^{p}(\Omega)$, then we have the following Poincar\'{e} type inequality on $\Omega\subset\mathbb{G}$:
\begin{equation}
\|f\|_{L^{p}(\Omega)}\leq  \frac{R p}{Q}\|\mathcal{R}f\|_{L^{p}(\Omega)}=\frac{R p}{Q}\left\|\frac{1}{|x|}\mathbb{E}f\right\|_{L^{p}(\Omega)},
\end{equation}
where $R=\underset{x\in \Omega}{\rm sup}|x|$, and $\widehat{\mathfrak{L}}_{0}^{1,p}(\Omega)$ is the completion of $C^{\infty}_{0}(\Omega\backslash\{0\})$ with respect to the norm
$$\|f\|_{\widehat{\mathfrak{L}}^{1,p}(\Omega)}:=\|f\|_{L^{p}(\Omega)}+\|\mathbb{E}f\|_{L^{p}(\Omega)}, \;\;1<p<\infty.$$
\item
Let $1<\gamma<\infty$ and $\max\{1,\gamma-1\}<q<\infty$. Then we have the continuous embedding
$$W^{1}_{0}L_{Q,q,\frac{q-1}{q},
\frac{q-\gamma}{q}}(\mathbb{G})\hookrightarrow \mathfrak{L}_{\infty,q,-\frac{1}{q},
-\frac{\gamma}{q}}(\mathbb{G}),$$
where $W^{1}_{0}L_{Q,q,\frac{q-1}{q},\frac{q-\gamma}{q}}(\mathbb{G})$ and $\mathfrak{L}_{\infty,q,-\frac{1}{q},-\frac{\gamma}{q}}(\mathbb{G})$ are defined in \eqref{SLZdef1} and \eqref{LZdef}, respectively. In particular, for any $R>0$, the inequality
\begin{multline}\label{SLZ111}
\left(\int_{\mathbb{G}}\frac{\chi_{B(0,eR)}(x)|f-f_{R}|^{q}+
\chi_{B^{c}(0,eR)}(x)|f-f_{e^{2}R}|^{q}}{
\left|\log\left|\log\frac{e R}{|x|}\right|\right|^{\gamma}\left|\log\frac{e R}{|x|}\right|}\frac{dx}{|x|^{Q}}\right)^{\frac{1}{q}}\\
\leq \frac{q}{\gamma-1}\left(\int_{\mathbb{G}}|x|^{q-Q}
\left|\log\frac{e R}{|x|}\right|^{q-1}\left|\log\left|\log\frac{e R}{|x|}\right|\right|^{q-\gamma}\left|\frac{1}{|x|}\mathbb{E}f\right|
^{q}dx\right)^{\frac{1}{q}}
\end{multline}
holds for all $f\in W^{1}_{0}L_{{Q,q,\frac{q-1}{q},\frac{q-\gamma}{q}}}(\mathbb{G})$, where the embedding constant $\frac{q}{\gamma-1}$ is sharp and $f_{R}=f(R\frac{x}{|x|})$.
\end{itemize}

An interesting observation is that the constants in most of the obtained inequalities are sharp for any quasi-norm $|\cdot|$, that is, they do not depend on the quasi-norm $|\cdot|$. Therefore, these inequalities are new already in the Euclidean setting of $\mathbb{R}^{n}$. Moreover, one of the novelty of this paper is the analysis of the fractional powers of the Euler operators in $\mathbb{R}^{n}$, as well as on the homogeneous groups. We refer to  \cite{RTY17} and \cite{RY17} for the applications of such inequalities to nonlinear Schr\"{o}dinger type equations on homogeneous groups, namely on graded groups. In particular, the authors expressed the best constants of the Sobolev and Gagliardo-Nirenberg inequalities in the variational form as well as in terms of the ground state solutions of the corresponding nonlinear subelliptic equations.

In Section \ref{SEC:prelim} we briefly recall the main concepts of homogeneous groups and fix the notation. In Section \ref{Sec3} we derive versions of  Sobolev type inequalities on homogeneous groups and discuss their consequences including higher order Sobolev-Rellich inequalities. Euler-Hilbert-Sobolev and Euler-Sobolev spaces on homogeneous groups are defined in Section \ref{SEC:EHS}. In Section \ref{SEC:Poincare} we consider an analogue of Poincar\'{e} inequality on homogeneous groups. Finally, in Section \ref{SLZ} we discuss an analogue of the critical Hardy inequality and Sobolev-Lorentz-Zygmund spaces on homogeneous groups.

\section{Euler operator on homogeneous groups}
\label{SEC:prelim}

In this section we very briefly recall the necessary notions and fix the notation for homogeneous groups. We also describe the Euler operator that will play a crucial role in our analysis.

We work with a family of dilations of a Lie algebra $\mathfrak{g}$, which
is a family of linear mappings of the following form
$$D_{\lambda}={\rm Exp}(A \,{\rm ln}\lambda)=\sum_{k=0}^{\infty}
\frac{1}{k!}({\rm ln}(\lambda) A)^{k},$$
where $A$ is a diagonalisable linear operator on the Lie algebra $\mathfrak{g}$
with positive eigenvalues,
and each $D_{\lambda}$ is a morphism of $\mathfrak{g}$,
that is, a linear mapping
from $\mathfrak{g}$ to itself satisfying:
$$\forall X,Y\in \mathfrak{g},\, \lambda>0,\;
[D_{\lambda}X, D_{\lambda}Y]=D_{\lambda}[X,Y],$$
where $[X,Y]:=XY-YX$ is the Lie bracket. Then, a {\em homogeneous group} is a connected simply connected Lie group whose Lie algebra is equipped with dilations. It induces the dilation structure on the homogeneous group $\mathbb G$ which we denote by $D_{\lambda}x$ or just by $\lambda x$.

Let $dx$ be the Haar measure on $\mathbb{G}$ and let $|S|$ denote the volume of a measurable set $S\subset \mathbb{G}$.
Then we have
\begin{equation}
|D_{\lambda}(S)|=\lambda^{Q}|S| \quad {\rm and}\quad \int_{\mathbb{G}}f(\lambda x)
dx=\lambda^{-Q}\int_{\mathbb{G}}f(x)dx,
\end{equation}
where $Q$ is the homogeneous dimension of $\mathbb G$:
$$Q := {\rm Tr}\,A.$$
A {\em homogeneous quasi-norm} on $\mathbb G$ is
a continuous non-negative function
$$\mathbb{G}\ni x\mapsto |x|\in [0,\infty)$$
satisfying the following properties
\begin{itemize}
\item   $|x^{-1}| = |x|$ for all $x\in \mathbb{G}$,
\item  $|\lambda x|=\lambda |x|$ for all
$x\in \mathbb{G}$ and $\lambda >0$,
\item  $|x|= 0$ if and only if $x=0$.
\end{itemize}

The quasi-ball centred at $x\in\mathbb{G}$ with radius $R > 0$ can be defined by
$$B(x,R):=\{y\in \mathbb{G}: |x^{-1}y|<R\}.$$
We also use the notation
$$B^{c}(x,R):=\{y\in \mathbb{G}: |x^{-1}y|\geq R\}.$$
The polar decomposition on homogeneous groups will be very useful for our analysis, and it can be formulated as follows:
there is a (unique)
positive Borel measure $\sigma$ on the
unit sphere
\begin{equation}\label{EQ:sphere}
\wp:=\{x\in \mathbb{G}:\,|x|=1\},
\end{equation}
such that for all $f\in L^{1}(\mathbb{G})$ we have
\begin{equation}\label{EQ:polar}
\int_{\mathbb{G}}f(x)dx=\int_{0}^{\infty}
\int_{\wp}f(ry)r^{Q-1}d\sigma(y)dr.
\end{equation}
We refer to Folland and Stein \cite{FS-Hardy} for its proof (see also \cite[Section 3.1.7]{FR} for a detailed discussion).

From now on, we fix a basis $\{X_{1},\ldots,X_{n}\}$ of $\mathfrak{g}$
such that
$$AX_{k}=\nu_{k}X_{k}$$
for every $k$, so that the matrix $A$ can be taken to be
$A={\rm diag} (\nu_{1},\ldots,\nu_{n}).$
Then each $X_{k}$ is homogeneous of degree $\nu_{k}$ and
$$
Q=\nu_{1}+\cdots+\nu_{n}.
$$
The decomposition of ${\exp}_{\mathbb{G}}^{-1}(x)$ in the Lie algebra $\mathfrak g$ defines the vector
$$e(x)=(e_{1}(x),\ldots,e_{n}(x))$$
by the formula
$${\exp}_{\mathbb{G}}^{-1}(x)=e(x)\cdot \nabla\equiv\sum_{j=1}^{n}e_{j}(x)X_{j},$$
where $\nabla=(X_{1},\ldots,X_{n})$.
On the other hand, we have the equality
$$x={\exp}_{\mathbb{G}}\left(e_{1}(x)X_{1}+\ldots+e_{n}(x)X_{n}\right).$$
Taking into account the homogeneity and denoting $x=ry,\,y\in \wp,$ this implies
$$
e(x)=e(ry)=(r^{\nu_{1}}e_{1}(y),\ldots,r^{\nu_{n}}e_{n}(y)).
$$
So one has
\begin{equation*}
\frac{d}{d|x|}f(x)=\frac{d}{dr}f(ry)=
 \frac{d}{dr}f({\exp}_{\mathbb{G}}
\left(r^{\nu_{1}}e_{1}(y)X_{1}+\ldots
+r^{\nu_{n}}e_{n}(y)X_{n}\right)).
\end{equation*}
Denoting by
\begin{equation}\label{EQ:Euler}
\mathcal{R} :=\frac{d}{dr},
\end{equation}
for all $x\in \mathbb G$ this gives the equality
\begin{equation}\label{dfdr}
	\frac{d}{d|x|}f(x)=\mathcal{R}f(x),
\end{equation}
for each homogeneous quasi-norm $|x|$ on a homogeneous group $\mathbb G.$

Let us state a relation between $\mathcal{R}$ and Euler's operator:
\begin{lem}\label{L:Euler}
Define the Euler operator by
\begin{equation}\label{EQ:def-Euler}
\mathbb{E}:=|x| \mathcal{R}.
\end{equation}
If $f:\mathbb G\backslash \{0\}\to\mathbb R$ is differentiable, then
$$
\mathbb{E}(f)=\nu f
 \; \textrm{ if and only if }\;
 f(r x)=r^{\nu} f(x)\;\; (\forall r>0, \nu\in\mathbb{R}, x\not=0).$$
\end{lem}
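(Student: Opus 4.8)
The plan is to reduce the statement to the classical Euler identity for homogeneous functions of one real variable by passing to polar coordinates on $\mathbb{G}$. Writing a nonzero point as $x = ry$ with $r = |x| > 0$ and $y \in \wp$, recall from \eqref{EQ:Euler}--\eqref{dfdr} that $\mathcal{R}f(x) = \frac{d}{dr}f(ry)$, so that $\mathbb{E}f(ry) = r\,\frac{d}{dr}f(ry)$. Fixing $y\in\wp$ and setting $g_{y}(r) := f(ry)$ for $r > 0$, the identity $\mathbb{E}f = \nu f$ becomes the linear ordinary differential equation $r\, g_{y}'(r) = \nu\, g_{y}(r)$ on $(0,\infty)$, while the homogeneity relation $f(rx) = r^{\nu}f(x)$ restricted to $x\in\wp$ becomes $g_{y}(r) = r^{\nu} g_{y}(1) = r^{\nu} f(y)$.

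For the ``if'' direction I would assume $f(rx) = r^{\nu}f(x)$ for all $r>0$ and $x\neq 0$; then $g_{y}(r) = r^{\nu}f(y)$, hence $\frac{d}{dr}g_{y}(r) = \nu r^{\nu-1}f(y)$, and therefore $\mathbb{E}f(ry) = r\cdot\nu r^{\nu-1}f(y) = \nu r^{\nu}f(y) = \nu f(ry)$. Since every nonzero $x$ is of the form $ry$ with $y\in\wp$, this yields $\mathbb{E}f = \nu f$ on $\mathbb{G}\setminus\{0\}$.

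For the converse, assume $\mathbb{E}f = \nu f$, so $r\, g_{y}'(r) = \nu\, g_{y}(r)$ for every $y\in\wp$ and $r>0$. The key observation is that $h_{y}(r) := r^{-\nu} g_{y}(r)$ satisfies $h_{y}'(r) = r^{-\nu-1}\bigl(r\, g_{y}'(r) - \nu\, g_{y}(r)\bigr) = 0$, so $h_{y}$ is constant on $(0,\infty)$, equal to $h_{y}(1) = g_{y}(1) = f(y)$. Thus $f(ry) = g_{y}(r) = r^{\nu}f(y)$ for all $r>0$ and $y\in\wp$. Finally, for an arbitrary $x\neq 0$ write $x = ry$ and compute, for $s>0$, $f(sx) = f((sr)y) = (sr)^{\nu}f(y) = s^{\nu}\bigl(r^{\nu}f(y)\bigr) = s^{\nu}f(ry) = s^{\nu}f(x)$, which is the asserted positive homogeneity of order $\nu$.

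The only points needing care are the justification that $r\mapsto g_{y}(r) = f(D_{r}y)$ is differentiable --- which follows from the differentiability of $f$ on $\mathbb{G}\setminus\{0\}$ together with the smoothness of the dilation map $(r,y)\mapsto D_{r}y$ --- and the uniqueness appeal for the equation $r\,g' = \nu g$ on the half-line, which is handled cleanly by the integrating-factor computation above rather than by separation of variables (the latter would require $g_{y}$ to be nonvanishing). I do not anticipate a genuine obstacle: the entire content is the translation into polar coordinates afforded by \eqref{EQ:polar}--\eqref{dfdr}.
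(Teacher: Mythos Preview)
Your proof is correct and follows essentially the same approach as the paper: reduce to a one-variable ODE via the polar decomposition and solve it. The only cosmetic difference is that the paper fixes an arbitrary $x\neq 0$ and sets $g(r):=f(rx)$ directly (so the final extension step from $y\in\wp$ to general $x$ is unnecessary), whereas you first restrict to $y\in\wp$ and then extend; your integrating-factor computation for $h_y$ makes explicit what the paper leaves as ``Consequently, $g(r)=g(1)r^{\nu}$''.
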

\begin{proof}[Proof of Lemma \ref{L:Euler}]
If a function $f$ is positively homogeneous of order $\nu$, that is, if $f(r x)=r^{\nu}f(x)$ holds for all $r>0$ and $x:=\rho y\not=0, \,\,y\in \wp,$ then using \eqref{dfdr} for such $f$, it follows that
$$
\mathbb{E}f=\nu f(x).
$$
Conversely, let us fix $x\not=0$ and define $g(r):=f(rx)$.
Using \eqref{dfdr}, the equality $\mathbb{E}f(rx)=\nu f(rx)$ means that
$$
g'(r)=\frac{d}{dr}f(rx)=\frac{1}{r} \mathbb{E}f(rx)=\frac{\nu}{r} f(rx)=\frac{\nu}{r}g(r).
$$
Consequently, $g(r)=g(1) r^{\nu}$, i.e. $f(rx)=r^{\nu} f(x)$ and thus $f$ is positively homogeneous of order $\nu$.
\end{proof}

We now establish some properties of the Euler operator $\mathbb{E}$.

\begin{lem}\label{L2:E} We have
\begin{equation}\label{L2:E1}
\mathbb{E^{*}}=-Q\mathbb{I}-\mathbb{E}
\end{equation}
where $\mathbb{I}$ and $\mathbb{E^{*}}$ are the identity operator and the formal adjoint operator of $\mathbb{E}$, respectively.
\end{lem}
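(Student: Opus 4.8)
The plan is to verify the identity $\mathbb{E}^{*} = -Q\mathbb{I} - \mathbb{E}$ by testing against a pair of smooth compactly supported functions and using the polar decomposition \eqref{EQ:polar} together with integration by parts in the radial variable. Concretely, for $f, g \in C_{0}^{\infty}(\mathbb{G}\backslash\{0\})$ I would compute $\int_{\mathbb{G}} (\mathbb{E}f)(x)\, \overline{g(x)}\, dx$ and massage it into the form $\int_{\mathbb{G}} f(x)\, \overline{(\mathbb{E}^{*}g)(x)}\, dx$, reading off $\mathbb{E}^{*}$ from the result. Since $\mathbb{E} = |x|\mathcal{R}$ and, by \eqref{dfdr}, $\mathcal{R}f(x) = \frac{d}{dr} f(ry)$ when $x = ry$, $y \in \wp$, the operator $\mathbb{E}$ acts purely in the radial direction: $\mathbb{E}f(ry) = r \frac{d}{dr} f(ry)$.

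The key computation runs as follows. Using \eqref{EQ:polar},
\begin{equation*}
\int_{\mathbb{G}} (\mathbb{E}f)(x)\, \overline{g(x)}\, dx = \int_{0}^{\infty} \int_{\wp} \left( r \frac{d}{dr} f(ry) \right) \overline{g(ry)}\, r^{Q-1}\, d\sigma(y)\, dr = \int_{\wp} \int_{0}^{\infty} \frac{d}{dr}\big( f(ry) \big)\, \overline{g(ry)}\, r^{Q}\, dr\, d\sigma(y).
\end{equation*}
For each fixed $y \in \wp$, integrate by parts in $r$ on $(0,\infty)$; the boundary terms vanish because $f(ry)\overline{g(ry)} r^{Q}$ has compact support in $(0,\infty)$ (as $f,g$ are supported away from the origin and have compact support). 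This gives
\begin{equation*}
\int_{\wp} \int_{0}^{\infty} \frac{d}{dr}\big( f(ry) \big)\, \overline{g(ry)}\, r^{Q}\, dr\, d\sigma(y) = -\int_{\wp} \int_{0}^{\infty} f(ry)\, \frac{d}{dr}\Big( \overline{g(ry)}\, r^{Q} \Big)\, dr\, d\sigma(y),
\end{equation*}
and expanding $\frac{d}{dr}( \overline{g(ry)} r^{Q}) = \overline{\big(\tfrac{d}{dr} g(ry)\big)} r^{Q} + Q\, \overline{g(ry)}\, r^{Q-1}$ yields
\begin{equation*}
-\int_{\wp}\int_{0}^{\infty} f(ry)\, \overline{\left( r\tfrac{d}{dr} g(ry) + Q\, g(ry) \right)}\, r^{Q-1}\, dr\, d\sigma(y) = \int_{\mathbb{G}} f(x)\, \overline{\big( -\mathbb{E}g(x) - Q g(x) \big)}\, dx,
\end{equation*}
where in the last step I re-assembled the integral via \eqref{EQ:polar} and recognized $r\frac{d}{dr} g(ry) = \mathbb{E}g(ry)$. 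Comparing with the definition of the formal adjoint, $\int (\mathbb{E}f)\overline{g} = \int f\, \overline{\mathbb{E}^{*}g}$, gives $\mathbb{E}^{*}g = -\mathbb{E}g - Q g$, which is \eqref{L2:E1}.

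The only delicate point is the justification of the integration by parts and of swapping the order of integration; both are harmless here because $f$ and $g$ lie in $C_{0}^{\infty}(\mathbb{G}\backslash\{0\})$, so all integrands are smooth, compactly supported in $r \in (0,\infty)$, and bounded in $y \in \wp$, with $\sigma$ a finite measure on the compact sphere $\wp$. Hence Fubini applies and the boundary contributions at $r = 0$ and $r = \infty$ are identically zero. No genuine obstacle arises; the proof is a direct one-variable integration by parts dressed up in polar coordinates.
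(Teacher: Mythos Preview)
Your proof is correct and follows essentially the same approach as the paper: both compute $\int_{\mathbb{G}} (\mathbb{E}f)\overline{g}\,dx$ via the polar decomposition \eqref{EQ:polar}, integrate by parts in the radial variable against the weight $r^{Q}$, and read off $\mathbb{E}^{*}=-Q\mathbb{I}-\mathbb{E}$ from the result. Your version adds explicit justification for Fubini and the vanishing of the boundary terms, but the argument is otherwise identical.
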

\begin{proof}[Proof of Lemma \ref{L2:E}]
Let us calculate a formal adjoint operator of $\mathbb{E}$ on $C_{0}^{\infty}(\mathbb{G}\backslash\{0\})$. We have
\begin{align*}\int_{\mathbb{G}}\mathbb{E}f(x)\overline{g(x)}dx & =\int_{0}^{\infty}\int_{\wp}
\frac{d}{dr}f(ry)\overline{g(ry)} r^{Q}d\sigma(y)dr\\ & =-\int_{0}^{\infty}\int_{\wp}
f(ry)\left(Qr^{Q-1}\overline{g(ry)}+r^{Q}\frac{d}{dr}\overline{g(ry)}\right)d\sigma(y)dr
\\ & =
-\int_{\mathbb{G}}f(x)(Q+\mathbb{E})\overline{g(x)}dx,
\end{align*}
by integration by parts using formula \eqref{dfdr}.
\end{proof}
We now show that the operator $\mathbb{A}=\mathbb{E}\mathbb{E}^{*}$ is Komatsu-non-negative (see e.g. \cite{Kom66} or \cite{Martinez-Sanz:fractional}) in $L^{2}(\mathbb{G})$, that is, $(-\infty,0)$ is included in the resolvent set $\rho(\mathbb{A})$ of $\mathbb{A}$ and
\begin{equation*}
\exists M>0, \; \forall\lambda>0, \quad \|(\lambda+\mathbb{A})^{-1}\|_{L^{2}(\mathbb{G})\rightarrow L^{2}(\mathbb{G})}\leq M\lambda^{-1}.
\end{equation*}
\begin{lem}\label{L2:Komatsu}
The operator $\mathbb{A}=\mathbb{E}\mathbb{E}^{*}$ is Komatsu-non-negative in $L^{2}(\mathbb{G})$:
\begin{equation}\label{Komatsu0}
\quad \|(\lambda+\mathbb{A})^{-1}\|_{L^{2}(\mathbb{G})\rightarrow L^{2}(\mathbb{G})}\leq \lambda^{-1}, \forall\lambda>0.
\end{equation}
\end{lem}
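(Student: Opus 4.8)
The idea is to compute $\mathbb{A}=\mathbb{E}\mathbb{E}^{*}$ explicitly after passing to polar coordinates and an exponential change of variable, reducing it to a one-dimensional constant-coefficient operator whose resolvent is transparent. First, using the polar decomposition \eqref{EQ:polar}, one identifies $L^{2}(\mathbb{G})$ unitarily with $L^{2}((0,\infty),r^{Q-1}\,dr)\otimes L^{2}(\wp,d\sigma)$, and under this identification $\mathbb{E}=|x|\mathcal{R}$ acts only in the radial variable, as $r\,\frac{d}{dr}$. Composing further with the unitary $U\colon L^{2}((0,\infty),r^{Q-1}\,dr)\to L^{2}(\mathbb{R},dt)$ given by $(Uf)(t)=e^{Qt/2}f(e^{t})$, a direct computation gives $U\mathbb{E}U^{-1}=\frac{d}{dt}-\frac{Q}{2}$; combined with Lemma \ref{L2:E} this also yields $U\mathbb{E}^{*}U^{-1}=-\frac{d}{dt}-\frac{Q}{2}$.

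Multiplying, one finds $U\mathbb{A}U^{-1}=\bigl(\frac{d}{dt}-\frac{Q}{2}\bigr)\bigl(-\frac{d}{dt}-\frac{Q}{2}\bigr)=-\frac{d^{2}}{dt^{2}}+\frac{Q^{2}}{4}$, acting as the identity on the $L^{2}(\wp)$ factor. This is a non-negative self-adjoint operator --- it is essentially self-adjoint on the dense, invariant image of $C_{0}^{\infty}(\mathbb{G}\setminus\{0\})$, which consists of functions compactly supported in $t$, exactly as for the one-dimensional free Schr\"odinger-type operator --- and by the Fourier transform in $t$ it is unitarily equivalent to multiplication by $\xi^{2}+\frac{Q^{2}}{4}$. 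Hence the spectrum of $\mathbb{A}$ lies in $[\frac{Q^{2}}{4},\infty)$, so for every $\lambda>0$ the operator $\lambda+\mathbb{A}$ is invertible on all of $L^{2}(\mathbb{G})$, i.e. $(-\infty,0)\subset\rho(\mathbb{A})$, and $(\lambda+\mathbb{A})^{-1}$ corresponds to multiplication by $(\lambda+\xi^{2}+\frac{Q^{2}}{4})^{-1}$, whose $L^{\infty}$ norm is at most $(\lambda+\frac{Q^{2}}{4})^{-1}\le\lambda^{-1}$; this is \eqref{Komatsu0}.

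An alternative route to the inequality in \eqref{Komatsu0} alone avoids the spectral picture: for $f$ in the domain one has $\langle(\lambda+\mathbb{A})f,f\rangle_{L^{2}(\mathbb{G})}=\lambda\|f\|_{L^{2}(\mathbb{G})}^{2}+\|\mathbb{E}^{*}f\|_{L^{2}(\mathbb{G})}^{2}\ge\lambda\|f\|_{L^{2}(\mathbb{G})}^{2}$, so Cauchy--Schwarz gives $\|(\lambda+\mathbb{A})f\|_{L^{2}(\mathbb{G})}\ge\lambda\|f\|_{L^{2}(\mathbb{G})}$ and hence the bound, once one knows that $\lambda+\mathbb{A}$ is onto. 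Thus the only point that genuinely needs care is surjectivity of $\lambda+\mathbb{A}$ --- equivalently, (essential) self-adjointness of $\mathbb{A}$ rather than mere symmetry and non-negativity --- and this is precisely what the reduction to $-\frac{d^{2}}{dt^{2}}+\frac{Q^{2}}{4}$ on $L^{2}(\mathbb{R})$ secures.
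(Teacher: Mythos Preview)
Your proof is correct, but it takes a genuinely different route from the paper's. The paper expands $\|(\lambda\mathbb{I}+\mathbb{A})f\|_{L^{2}}^{2}$ directly for $f\in C_{0}^{\infty}(\mathbb{G}\setminus\{0\})$, uses $\mathbb{E}^{*}=-Q\mathbb{I}-\mathbb{E}$, and evaluates the cross term by integration by parts in polar coordinates to obtain the exact identity
\[
\|(\lambda\mathbb{I}+\mathbb{A})f\|_{L^{2}}^{2}=\lambda^{2}\|f\|_{L^{2}}^{2}+2\lambda\|\mathbb{E}f\|_{L^{2}}^{2}+\|\mathbb{A}f\|_{L^{2}}^{2},
\]
from which \eqref{Komatsu0} follows by dropping non-negative terms. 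Your ``alternative route'' via the quadratic form $\langle(\lambda+\mathbb{A})f,f\rangle=\lambda\|f\|^{2}+\|\mathbb{E}^{*}f\|^{2}$ is essentially the same computation in disguise. What is genuinely new in your argument is the unitary conjugation: passing to $t=\log r$ with the weight $e^{Qt/2}$ turns $\mathbb{A}$ into $-\tfrac{d^{2}}{dt^{2}}+\tfrac{Q^{2}}{4}$ on $L^{2}(\mathbb{R})\otimes L^{2}(\wp)$, whose spectral theory is explicit. This buys you strictly more than the paper's argument --- you get the exact spectrum $[\tfrac{Q^{2}}{4},\infty)$, the sharper bound $\|(\lambda+\mathbb{A})^{-1}\|=(\lambda+\tfrac{Q^{2}}{4})^{-1}$, and, crucially, you settle essential self-adjointness and hence surjectivity of $\lambda+\mathbb{A}$, a point the paper leaves implicit (its lower bound alone only gives injectivity and closed range). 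The paper's approach, by contrast, stays entirely within the integration-by-parts machinery used throughout and avoids the Fourier transform.
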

\begin{proof}[Proof of Lemma \ref{L2:Komatsu}] We start with $f\in C_{0}^{\infty}(\mathbb{G}\backslash\{0\})$. A direct calculation shows
\begin{multline}\label{Komatsu1}
\|(\lambda \mathbb{I}+\mathbb{A})f\|^{2}_{L^{2}(\mathbb{G})}=\|(\lambda \mathbb{I}-\mathbb{E}(Q\mathbb{I}+\mathbb{E}))f\|^{2}_{L^{2}(\mathbb{G})} \\ =\lambda^{2}\left\|f\right\|^{2}_{L^{2}(\mathbb{G})}+
\left\|\mathbb{E}(Q\mathbb{I}+\mathbb{E})f\right\|^{2}_{L^{2}(\mathbb{G})}-2\lambda {\rm Re} \int_{\mathbb{G}}f(x)
\overline{Q\mathbb{E}f+\mathbb{E}^{2}f}dx.
\end{multline}
Since
\begin{align*}
{\rm Re}\int_{G}f(x)\overline{\mathbb{E}^{2}f}dx & ={\rm Re}\int_{0}^{\infty}\int_{\wp}
f(ry)\overline{\frac{d}{dr}(\mathbb{E}f(ry))} r^{Q} d\sigma(y)dr\\
& =-{\rm Re}\int_{0}^{\infty}\int_{\wp}
\overline{(\mathbb{E}f(ry))} \left(r^{Q}\frac{d}{dr}f(ry)+Qr^{Q-1}f(ry)\right)d\sigma(y)dr\\
 & =-\left\|\mathbb{E}f\right\|^{2}_{L^{2}(\mathbb{G})}-Q{\rm Re} \int_{\mathbb{G}}\overline{\mathbb{E}f(x)}f(x)dx,
\end{align*}
we have
$$-2\lambda {\rm Re} \int_{\mathbb{G}}f(x)\overline{Q\mathbb{E}f(x)+\mathbb{E}^{2}f(x)}dx$$
\begin{equation}\label{Komatsu2}
=-2\lambda Q {\rm Re} \int_{\mathbb{G}}f(x)
\overline{\mathbb{E}f(x)}dx-2\lambda {\rm Re} \int_{\mathbb{G}}f(x)
\overline{\mathbb{E}^{2}f(x)}dx=2\lambda\left\|\mathbb{E}f\right\|^{2}_{L^{2}(\mathbb{G})}.
\end{equation}
Combining \eqref{Komatsu1} with \eqref{Komatsu2}, we obtain
$$\|(\lambda \mathbb{I}-\mathbb{E}(Q\mathbb{I}+\mathbb{E}))f\|^{2}_{L^{2}(\mathbb{G})}=\lambda^{2}\left\|f\right\|^{2}_{L^{2}(\mathbb{G})}+
2\lambda\left\|\mathbb{E}f\right\|^{2}_{L^{2}(\mathbb{G})}+\left\|\mathbb{E}(Q\mathbb{I}+\mathbb{E})f\right\|^{2}_{L^{2}(\mathbb{G})}.$$
By dropping positive terms, it follows that
$$\|(\lambda \mathbb{I}-\mathbb{E}(Q\mathbb{I}+\mathbb{E}))f\|^{2}_{L^{2}(\mathbb{G})}\geq\lambda^{2}\left\|f\right\|^{2}_{L^{2}(\mathbb{G})},$$
which implies \eqref{Komatsu0}.
\end{proof}
\begin{lem}\label{L2:E2} For all complex-valued functions $f\in C_{0}^{\infty}(\mathbb{G}\backslash\{0\})$ we have
\begin{equation}\label{L2:E3}
\left\|\mathbb{E}f\right\|_{L^{2}(\mathbb{G})}=\left\|\mathbb{E^{*}}f\right\|_{L^{2}(\mathbb{G})}.
\end{equation}
\end{lem}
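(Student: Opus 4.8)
The plan is to reduce the claimed $L^{2}$-identity to a single real inner product computation, using the explicit adjoint formula from Lemma \ref{L2:E}. Since $\mathbb{E}^{*}=-Q\mathbb{I}-\mathbb{E}$, we have $\mathbb{E}^{*}f=-(Q\mathbb{I}+\mathbb{E})f$, hence
$$
\|\mathbb{E}^{*}f\|^{2}_{L^{2}(\mathbb{G})}=\|(Q\mathbb{I}+\mathbb{E})f\|^{2}_{L^{2}(\mathbb{G})}
=Q^{2}\|f\|^{2}_{L^{2}(\mathbb{G})}+2Q\,{\rm Re}\int_{\mathbb{G}}\overline{f}\,\mathbb{E}f\,dx+\|\mathbb{E}f\|^{2}_{L^{2}(\mathbb{G})}.
$$
Thus \eqref{L2:E3} is equivalent to the identity $2\,{\rm Re}\int_{\mathbb{G}}\overline{f}\,\mathbb{E}f\,dx=-Q\|f\|^{2}_{L^{2}(\mathbb{G})}$, and it suffices to establish this.

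To prove it, I would use the polar decomposition \eqref{EQ:polar} together with $\mathbb{E}=|x|\mathcal{R}$ and $\mathcal{R}=\frac{d}{dr}$ from \eqref{EQ:Euler}: writing $x=ry$ with $y\in\wp$,
$$
\int_{\mathbb{G}}\overline{f}\,\mathbb{E}f\,dx=\int_{0}^{\infty}\int_{\wp}\overline{f(ry)}\,r\frac{d}{dr}f(ry)\,r^{Q-1}\,d\sigma(y)\,dr.
$$
Taking real parts and using $2\,{\rm Re}\!\left(\overline{f(ry)}\frac{d}{dr}f(ry)\right)=\frac{d}{dr}|f(ry)|^{2}$, we obtain
$$
2\,{\rm Re}\int_{\mathbb{G}}\overline{f}\,\mathbb{E}f\,dx=\int_{0}^{\infty}\int_{\wp}r^{Q}\frac{d}{dr}|f(ry)|^{2}\,d\sigma(y)\,dr
=-Q\int_{0}^{\infty}\int_{\wp}|f(ry)|^{2}r^{Q-1}\,d\sigma(y)\,dr=-Q\|f\|^{2}_{L^{2}(\mathbb{G})},
$$
where the integration by parts in $r$ produces no boundary terms because $f\in C_{0}^{\infty}(\mathbb{G}\backslash\{0\})$, so $r\mapsto f(ry)$ is supported in a compact subset of $(0,\infty)$ uniformly in $y$. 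Substituting this back into the expansion of $\|\mathbb{E}^{*}f\|^{2}_{L^{2}(\mathbb{G})}$ makes the terms $Q^{2}\|f\|^{2}_{L^{2}(\mathbb{G})}$ and $2Q\cdot(-\tfrac{Q}{2}\|f\|^{2}_{L^{2}(\mathbb{G})})$ cancel, leaving $\|\mathbb{E}^{*}f\|^{2}_{L^{2}(\mathbb{G})}=\|\mathbb{E}f\|^{2}_{L^{2}(\mathbb{G})}$.

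An alternative, purely algebraic route avoids recomputing the polar integral: setting $z:=\int_{\mathbb{G}}\mathbb{E}f\,\overline{f}\,dx$, the integration-by-parts identity established in the proof of Lemma \ref{L2:E} (namely $\int_{\mathbb{G}}\mathbb{E}f\,\overline{g}\,dx=-\int_{\mathbb{G}}f\,\overline{(Q+\mathbb{E})g}\,dx$) applied with $g=f$ gives $z=-Q\|f\|^{2}_{L^{2}(\mathbb{G})}-\overline{z}$, i.e. $2\,{\rm Re}\,z=-Q\|f\|^{2}_{L^{2}(\mathbb{G})}$, which is exactly the identity needed above. The only point requiring care in either approach is the bookkeeping of complex conjugates and the observation that all boundary contributions vanish by the support hypothesis on $f$; there is no genuine obstacle.
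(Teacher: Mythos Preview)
Your proof is correct and follows essentially the same approach as the paper: expand $\|\mathbb{E}^{*}f\|^{2}_{L^{2}}$ via $\mathbb{E}^{*}=-Q\mathbb{I}-\mathbb{E}$, then evaluate the cross term $2\,{\rm Re}\int_{\mathbb{G}}\overline{f}\,\mathbb{E}f\,dx=-Q\|f\|^{2}_{L^{2}}$ using the polar decomposition and integration by parts in $r$. Your alternative algebraic route (deducing $2\,{\rm Re}\,z=-Q\|f\|^{2}_{L^{2}}$ directly from the adjoint identity) is a pleasant shortcut not in the paper, but the core argument is the same.
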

\begin{rem} The following identity is easy to check from the definition:
$$\mathbb{A}=\mathbb{E}\mathbb{E^{*}}=\mathbb{E^{*}}\mathbb{E}.$$
\end{rem}
\begin{proof}[Proof of Lemma \ref{L2:E2}] Using the representation of $\mathbb{E^{*}}$ in \eqref{L2:E1}, we get
$$\left\|\mathbb{E^{*}}f\right\|^{2}_{L^{2}(\mathbb{G})}=\left\|(-Q\mathbb{I}-\mathbb{E})f\right\|^{2}_{L^{2}(\mathbb{G})}$$
\begin{equation}\label{L2:E4}
=Q^2\left\|f\right\|^{2}_{L^{2}(\mathbb{G})}+2Q{\rm Re}\int_{\mathbb{G}}f(x)\overline{\mathbb{E}f(x)}dx+\left\|\mathbb{E}f\right\|^{2}_{L^{2}(\mathbb{G})}.
\end{equation}
Then we have
\begin{align*} 2Q{\rm Re}\int_{\mathbb{G}}f(x)\overline{\mathbb{E}f(x)}dx & =2Q{\rm Re}\int_{0}^{\infty}\int_{\wp}
f(ry)\frac{d}{dr}\overline{f(ry)} r^{Q} d\sigma(y)dr
\\ & =Q\int_{0}^{\infty}r^{Q}\int_{\wp}
\frac{d}{dr}(|f(ry)|^2) d\sigma(y)dr
\\ & =-Q^{2}\int_{0}^{\infty}\int_{\wp}
|f(ry)|^2 r^{Q-1} d\sigma(y)dr
\end{align*}
\begin{equation}\label{L2:E5}=-Q^{2}\left\|f\right\|^{2}_{L^{2}(\mathbb{G})}.
\end{equation}
Combining this with \eqref{L2:E4} we obtain \eqref{L2:E3}.
\end{proof}
In \eqref{L2:E3} replacing $f$ by $\mathbb{E}f$, we get
\begin{cor}\label{AE} For all complex-valued functions $f\in C_{0}^{\infty}(\mathbb{G}\backslash\{0\})$ we have
\begin{equation}\label{A0}
\left\|\mathbb{A}f\right\|_{L^{2}(\mathbb{G})}=\left\|\mathbb{E}^{2}f\right\|_{L^{2}(\mathbb{G})}.
\end{equation}
\end{cor}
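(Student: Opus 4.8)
The plan is to obtain this as an immediate consequence of Lemma~\ref{L2:E2}, exactly as the sentence preceding the statement indicates. The key preliminary observation is that the Euler operator maps $C_{0}^{\infty}(\mathbb{G}\backslash\{0\})$ into itself: writing $x=ry$ with $y\in\wp$, we have $\mathbb{E}f(ry)=r\frac{d}{dr}f(ry)$ by \eqref{dfdr}, and since $r\mapsto f(ry)$ is smooth for every fixed $y$ whenever $f\in C_{0}^{\infty}(\mathbb{G}\backslash\{0\})$, the function $\mathbb{E}f$ is again smooth on $\mathbb{G}\backslash\{0\}$; moreover ${\rm supp}\,\mathbb{E}f\subseteq{\rm supp}\,f$, which is a compact subset of $\mathbb{G}\backslash\{0\}$. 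Hence $\mathbb{E}f\in C_{0}^{\infty}(\mathbb{G}\backslash\{0\})$, and it is legitimate to insert $\mathbb{E}f$ in place of $f$ in \eqref{L2:E3}.

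Carrying this out, Lemma~\ref{L2:E2} applied to $\mathbb{E}f$ gives
\begin{equation*}
\left\|\mathbb{E}(\mathbb{E}f)\right\|_{L^{2}(\mathbb{G})}=\left\|\mathbb{E}^{*}(\mathbb{E}f)\right\|_{L^{2}(\mathbb{G})},
\end{equation*}
that is, $\|\mathbb{E}^{2}f\|_{L^{2}(\mathbb{G})}=\|\mathbb{E}^{*}\mathbb{E}f\|_{L^{2}(\mathbb{G})}$. It then remains only to identify $\mathbb{E}^{*}\mathbb{E}$ with $\mathbb{A}=\mathbb{E}\mathbb{E}^{*}$. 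This is the content of the Remark, and it follows directly from Lemma~\ref{L2:E}: since $\mathbb{E}^{*}=-Q\mathbb{I}-\mathbb{E}$ is a polynomial in $\mathbb{E}$, the operators $\mathbb{E}$ and $\mathbb{E}^{*}$ commute, so $\mathbb{E}^{*}\mathbb{E}=-Q\mathbb{E}-\mathbb{E}^{2}=\mathbb{E}\mathbb{E}^{*}=\mathbb{A}$. Substituting this into the previous display yields $\|\mathbb{A}f\|_{L^{2}(\mathbb{G})}=\|\mathbb{E}^{2}f\|_{L^{2}(\mathbb{G})}$, which is \eqref{A0}.

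There is essentially no obstacle here beyond the bookkeeping above; the only point that genuinely needs a word of justification is the domain issue — that $\mathbb{E}f$ still belongs to the class $C_{0}^{\infty}(\mathbb{G}\backslash\{0\})$ for which Lemma~\ref{L2:E2} was proved — since a general homogeneous quasi-norm $|\cdot|$ need not be smooth away from the origin. This is handled by working in polar coordinates as above, where $\mathbb{E}$ acts only through the smooth radial variable $r$. Alternatively, one can bypass the issue entirely by repeating the integration-by-parts computation from the proof of Lemma~\ref{L2:E2} verbatim with $f$ replaced by $\mathbb{E}f$, which uses only \eqref{dfdr} and the polar decomposition \eqref{EQ:polar}.
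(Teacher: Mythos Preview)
Your argument is correct and follows exactly the route the paper takes: apply Lemma~\ref{L2:E2} with $f$ replaced by $\mathbb{E}f$, then invoke the commutation $\mathbb{E}^{*}\mathbb{E}=\mathbb{E}\mathbb{E}^{*}$ from the Remark (which you correctly justify via $\mathbb{E}^{*}=-Q\mathbb{I}-\mathbb{E}$). Your additional care about the domain issue---whether $\mathbb{E}f$ again lies in $C_{0}^{\infty}(\mathbb{G}\backslash\{0\})$ when the quasi-norm need not be smooth---goes beyond what the paper spells out, and your suggested workaround of rerunning the polar-coordinate integration by parts directly is a sound way to sidestep it.
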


\section{Sobolev type inequalities}
\label{Sec3}
In this section and in the sequel we adopt all the notation introduced in Section \ref{SEC:prelim} concerning homogeneous groups and the operator $\mathbb{E}$.
\subsection{Sobolev type inequalities} If $1<p,p^{*}<\infty$ and
\begin{equation}
\frac{1}{p}=\frac{1}{p^{*}}-\frac{1}{n},
\end{equation}
then the (Euclidean) Sobolev inequality has the form
\begin{equation}
\|g\|_{L^{p}(\mathbb{R}^{n})}\leq C(p)\|\nabla g\|_{L^{p^{*}}(\mathbb{R}^{n})},
\end{equation}
where $\nabla$ is the standard gradient in $\mathbb{R}^{n}$.

In \cite{OS09} the following Sobolev type inequality with respect to the operator $x\cdot \nabla$ instead of $\nabla$ has been considered:
\begin{equation}\label{Sob}
\|g\|_{L^{p}(\mathbb{R}^{n})}\leq C'(p)\|x\cdot\nabla g\|_{L^{q}(\mathbb{R}^{n})}.
\end{equation}
For any $\lambda>0$, by substituting $g(x)=h(\lambda x)$ into \eqref{Sob}, one readily observes that $p=q$ is a necessary condition to obtain \eqref{Sob}.

Let us now show the $L^{p}$-Sobolev type inequality and its remainder formula on the homogeneous group $\mathbb{G}$.

\begin{thm}\label{RemSobolev} Let $\mathbb{G}$ be a homogeneous group of homogeneous dimension $Q$ and
let $|\cdot|$ be any homogeneous quasi-norm on $\mathbb{G}$.
\begin{itemize}
\item[(i)] Then we have
\begin{equation}\label{LpSobolevinC}
\left\|f\right\|_{L^{p}(\mathbb{G})}\leq\frac{p}{Q}\left\|\mathbb{E} f\right\|_{L^{p}(\mathbb{G})}, \quad 1<p<\infty,
\end{equation}
for all complex-valued functions $f\in C_{0}^{\infty}(\mathbb{G}\backslash\{0\})$, where the constant $\frac{p}{Q}$ is sharp and the equality is attained if and only if $f=0$.

\item[(ii)] Denoting by
$$u:=u(x)=-\frac{p}{Q}\mathbb{E} f(x), \;v:=v(x)=f(x),$$
we have the identity
\begin{equation}\label{LH2}
\qquad
\left\|u\right\|^{p}_{L^{p}(\mathbb{G})}
-\left\|v\right\|^{p}_{L^{p}(\mathbb{G})}=p\int_{\mathbb{G}}I_{p}(v,u)|v-u|^{2}dx, \quad 1<p<\infty,
\end{equation}
for every real-valued functions $f\in C_{0}^{\infty}(\mathbb{G}\backslash\{0\})$, where
$$
I_{p}(h,g)=(p-1)\int_{0}^{1}|\xi h+(1-\xi)g|^{p-2}\xi d\xi.
$$
\item[(iii)]
In the case $p=2$, the identity \eqref{LH2} holds for all complex-valued functions $f\in C_{0}^{\infty}(\mathbb{G}\backslash\{0\})$ and has the following form
\begin{equation}\label{EQ:expL2}
\left\|\mathbb{E} f\right\|^{2}_{L^{2}(\mathbb{G})}=
\left(\frac{Q}{2}\right)^{2}\left\|f\right\|^{2}_{L^{2}(\mathbb{G})}+
\left\|\mathbb{E} f+\frac{Q}{2}f\right\|^{2}_{L^{2}(\mathbb{G})}.
\end{equation}
\item[(iv)] In the case $p=2$ and $Q\geq3$ the inequality \eqref{LpSobolevinC} is equivalent to Hardy's inequality for any $g\in C_{0}^{\infty}(\mathbb{G}\backslash\{0\})$:
\begin{equation}\label{Hardy}
\left\|\frac{g}{|x|}\right\|_{L^{2}(\mathbb{G})}\leq\frac{2}{Q-2}\left\|\mathcal{R} g\right\|_{L^{2}(\mathbb{G})}=\frac{2}{Q-2}\left\|\frac{1}{|x|}\mathbb{E} g\right\|_{L^{2}(\mathbb{G})}.
\end{equation}
\item[(v)] In the case $1<p<Q$ the inequality \eqref{LpSobolevinC} yields Hardy's inequality for any $f\in C_{0}^{\infty}(\mathbb{G}\backslash\{0\})$:
\begin{equation}\label{Hardy_p}
\left\|\frac{f}{|x|}\right\|_{L^{p}(\mathbb{G})}\leq\frac{p}{Q-p}\left\|\mathcal{R} f\right\|_{L^{p}(\mathbb{G})}.
\end{equation}
\end{itemize}
\end{thm}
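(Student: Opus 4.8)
The plan is to establish part (ii) first, since it contains the main identity, and then to obtain (i) and (iii) as immediate consequences and (iv), (v) by elementary substitutions.

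\emph{The identity (ii).} I would start from the scalar identity: for $a,b\in\mathbb{R}$ and $1<p<\infty$,
\[
|a|^{p}-|b|^{p}=p|b|^{p-2}b\,(a-b)+p\,I_{p}(b,a)\,(a-b)^{2},
\]
which is the second-order Taylor formula for $\xi\mapsto|b+\xi(a-b)|^{p}$ on $[0,1]$ with the integral remainder expanded around $\xi=1$ (equivalently, one integration by parts, which is why the remainder carries the weight $\xi$ appearing in $I_{p}$ rather than $1-\xi$); for $1<p<2$ the possible zero of the affine function $b+\xi(a-b)$ yields only an integrable singularity of $|\cdot|^{p-2}$, so the identity holds for all $a,b$. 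Applying it pointwise with $a=u(x)=-\frac{p}{Q}\mathbb{E}f(x)$, $b=v(x)=f(x)$ and integrating over $\mathbb{G}$, the asserted identity \eqref{LH2} is equivalent to the vanishing of the linear term $p\int_{\mathbb{G}}|f|^{p-2}f\,(u-v)\,dx$, i.e.\ to
\[
-\frac{p}{Q}\int_{\mathbb{G}}|f|^{p-2}f\,\mathbb{E}f\,dx=\|f\|_{L^{p}(\mathbb{G})}^{p}.
\]
This is the one genuine computation: writing $\mathbb{E}f=r\frac{d}{dr}f$ in the polar coordinates \eqref{EQ:polar}, using $|f|^{p-2}f\,\frac{d}{dr}f=\frac{1}{p}\frac{d}{dr}|f|^{p}$, and integrating by parts in $r$ (boundary terms vanish since $f\in C_{0}^{\infty}(\mathbb{G}\backslash\{0\})$) gives $\int_{\mathbb{G}}|f|^{p-2}f\,\mathbb{E}f\,dx=-\frac{Q}{p}\|f\|_{L^{p}(\mathbb{G})}^{p}$.

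\emph{The inequality (i) and the case (iii).} Since $p>1$, the kernel $I_{p}(v,u)$ is nonnegative, so \eqref{LH2} at once gives \eqref{LpSobolevinC} for real $f$. For complex $f$ I would argue directly: the same polar-coordinate integration by parts shows $\|f\|_{L^{p}}^{p}=-\frac{p}{Q}\,{\rm Re}\int_{\mathbb{G}}|f|^{p-2}\overline{f}\,\mathbb{E}f\,dx$, and Hölder's inequality with exponents $p/(p-1)$ and $p$ then yields $\|f\|_{L^{p}}^{p}\le\frac{p}{Q}\|f\|_{L^{p}}^{p-1}\|\mathbb{E}f\|_{L^{p}}$. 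Tracking equality through $I_{p}\ge0$ in the real case (resp.\ through Hölder's and the triangle inequality in the complex case) forces $\mathbb{E}f=-\frac{Q}{p}f$, hence by Lemma \ref{L:Euler} that $f$ is positively homogeneous of degree $-Q/p$; a nonzero such function cannot be compactly supported away from the origin, so equality holds only for $f=0$. For sharpness of $p/Q$ I would test on $f_{R}(x)=|x|^{-Q/p}\chi_{R}(|x|)$, with $\chi_{R}$ a smooth cut-off equal to $1$ on $[1,R]$, supported in $(1/2,2R)$, with $|\chi_{R}'|\le C$ near $1$ and $|\chi_{R}'|\le C/R$ near $R$: then $\mathbb{E}f_{R}=-\frac{Q}{p}f_{R}+|x|^{1-Q/p}\chi_{R}'(|x|)$ with the error term bounded in $L^{p}$, while $\|f_{R}\|_{L^{p}}^{p}$ diverges logarithmically in $R$, so $\|\mathbb{E}f_{R}\|_{L^{p}}/\|f_{R}\|_{L^{p}}\to Q/p$. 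Finally, for $p=2$ one has $I_{2}\equiv\frac12$, so \eqref{LH2} reads $\|u\|_{L^{2}}^{2}-\|v\|_{L^{2}}^{2}=\|u-v\|_{L^{2}}^{2}$; inserting $u=-\frac{2}{Q}\mathbb{E}f$, $v=f$ and multiplying by $Q^{2}/4$ gives \eqref{EQ:expL2}, and for complex $f$ this identity follows directly by expanding the last term of \eqref{EQ:expL2} and using $\mathbb{E}^{*}=-Q\mathbb{I}-\mathbb{E}$ (Lemma \ref{L2:E}), which gives $2\,{\rm Re}\int_{\mathbb{G}}f\,\overline{\mathbb{E}f}\,dx=-Q\|f\|_{L^{2}}^{2}$.

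\emph{The Hardy inequalities (iv), (v).} Both come from the substitution $g=|x|h$ together with the Leibniz rule $\mathbb{E}(|x|h)=|x|(h+\mathbb{E}h)$ (valid since $\mathbb{E}=|x|\mathcal{R}$ is first order and $\mathbb{E}|x|=|x|$), whence $\mathcal{R}(|x|h)=\frac{1}{|x|}\mathbb{E}(|x|h)=(\mathbb{I}+\mathbb{E})h$ and $(|x|h)/|x|=h$. For (iv), Hardy's inequality \eqref{Hardy} becomes $\|h\|_{L^{2}}\le\frac{2}{Q-2}\|(\mathbb{I}+\mathbb{E})h\|_{L^{2}}$; expanding and using again $2\,{\rm Re}\int_{\mathbb{G}}h\,\overline{\mathbb{E}h}\,dx=-Q\|h\|_{L^{2}}^{2}$ yields the exact identity $\|(\mathbb{I}+\mathbb{E})h\|_{L^{2}}^{2}=\|\mathbb{E}h\|_{L^{2}}^{2}-(Q-1)\|h\|_{L^{2}}^{2}$, so the inequality is equivalent to $\big(\frac{(Q-2)^{2}}{4}+(Q-1)\big)\|h\|_{L^{2}}^{2}\le\|\mathbb{E}h\|_{L^{2}}^{2}$; since $\frac{(Q-2)^{2}}{4}+(Q-1)=\frac{Q^{2}}{4}$, this is precisely \eqref{LpSobolevinC} with $p=2$, which (every step being an equivalence and the substitution a bijection) proves (iv). For (v), given $f\in C_{0}^{\infty}(\mathbb{G}\backslash\{0\})$ put $u=f/|x|$, so that $\|f/|x|\|_{L^{p}}=\|u\|_{L^{p}}$ and $\mathcal{R}f=\frac{1}{|x|}\mathbb{E}f=(\mathbb{I}+\mathbb{E})u$; the triangle inequality $\|\mathbb{E}u\|_{L^{p}}\le\|(\mathbb{I}+\mathbb{E})u\|_{L^{p}}+\|u\|_{L^{p}}$ combined with \eqref{LpSobolevinC} applied to $u$ gives $\big(1-\frac{p}{Q}\big)\|u\|_{L^{p}}\le\frac{p}{Q}\|(\mathbb{I}+\mathbb{E})u\|_{L^{p}}$, i.e.\ $\|u\|_{L^{p}}\le\frac{p}{Q-p}\|(\mathbb{I}+\mathbb{E})u\|_{L^{p}}$ since $p<Q$, which is \eqref{Hardy_p}.

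\emph{Main obstacle.} The only step with real content is the vanishing of the linear term in (ii), that is, the polar-coordinate integration by parts $\int_{\mathbb{G}}|f|^{p-2}f\,\mathbb{E}f\,dx=-\frac{Q}{p}\|f\|_{L^{p}}^{p}$, together with the technical care needed to justify the scalar Taylor identity when $1<p<2$ and to handle complex-valued $f$ in (i); once the Leibniz rule $\mathbb{E}(|x|h)=|x|(h+\mathbb{E}h)$ and the arithmetic identity $\frac{(Q-2)^{2}}{4}+(Q-1)=\frac{Q^{2}}{4}$ are noted, parts (iv) and (v) are routine.
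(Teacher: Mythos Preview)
Your proposal is correct and follows essentially the same route as the paper: the key integration-by-parts identity $\int_{\mathbb{G}}|f|^{p-2}f\,\mathbb{E}f\,dx=-\frac{Q}{p}\|f\|_{L^{p}}^{p}$ in polar coordinates, the pointwise Taylor-type identity $|u|^{p}+(p-1)|v|^{p}-p|v|^{p-2}vu=pI_{p}(v,u)|v-u|^{2}$ for (ii), H\"older for the complex case of (i), the substitution $g=|x|h$ together with $\|(\mathbb{I}+\mathbb{E})h\|_{L^{2}}^{2}=\|\mathbb{E}h\|_{L^{2}}^{2}-(Q-1)\|h\|_{L^{2}}^{2}$ for (iv), and the triangle inequality for (v). The only cosmetic differences are that you give an explicit test-function sequence for sharpness (the paper argues it by approximating the extremal homogeneous profile) and you invoke Lemma~\ref{L2:E} for the cross term in (iii)/(iv) where the paper redoes the integration by parts directly; neither changes the argument.
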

\begin{rem} Let us consider the following Sobolev type inequality for all $1<p,q<\infty$:
\begin{equation}\label{L_pq}
\|g\|_{L^{p}(\mathbb{G})}\leq C(p)\|\mathbb{E}g\|_{L^{q}(\mathbb{G})}.
\end{equation}
For any $\lambda>0$, substituting $g(x)=h(\lambda x)$ into \eqref{L_pq} and using the fact the Euler operator is a homogeneous operator of order zero, we obtain that $p=q$ is a necessary condition to obtain \eqref{L_pq}.
\end{rem}
\begin{rem}\label{LpHardy}
Let us show that Part (ii) implies Part (i).
By dropping nonnegative term in the right-hand side of \eqref{LH2}, we obtain
\begin{equation}\label{LpHardyeq}
\left\|f\right\|_{L^{p}(\mathbb{G})}\leq\frac{p}{Q}\left\|\mathbb{E} f\right\|_{L^{p}(\mathbb{G})},\quad 1<p<\infty,
\end{equation}
for all real-valued functions $f\in C_{0}^{\infty}(\mathbb{G}\backslash\{0\}).$
Consequently, this inequality holds for all complex-valued functions by using the identity (cf. Davies \cite[p. 176]{Davies-bk:Semigroups-1980})
\begin{equation}\label{EQ:Davies-rc}
\forall z\in\mathbb C:\;
|z|^{p}=\left(\int_{-\pi}^{\pi}|\cos\theta|^{p} d\theta\right)^{-1}
\int_{-\pi}^{\pi}\left| {\rm Re}(z)\cos\theta+{\rm Im}(z)\sin\theta\right|^{p}d\theta,
\end{equation}
which implies from the representation $z=r(\cos\phi+i\sin\phi)$ by some manipulations.

So, the inequality \eqref{LpSobolevinC} is valid with the sharp constant $\frac{p}{Q}$. We now claim that this constant is attained only for $f=0$. By virtue of \eqref{EQ:Davies-rc}, it is enough to check it only for real-valued functions $f$. If the right-hand side of \eqref{LH2} is zero, then we must obtain
$$
-\frac{p}{Q}\mathbb {E} f(x)=f(x),
$$
which implies that $\mathbb{E}(f)=-\frac{Q}{p}f$. Taking into account the property of the Euler operator in Lemma \ref{L:Euler} it means that $f$ is positively homogeneous of order $-\frac{Q}{p}$, that is, there exists a function $h:\wp\to\mathbb C$ such that
\begin{equation}\label{EQ:homr1}
f(x)=|x|^{-\frac{Q}{p}}h\left(\frac{x}{|x|}\right),
\end{equation}
where $\wp$ is defined by \eqref{EQ:sphere}.
In particular, \eqref{EQ:homr1} means that $f$ can not be compactly supported unless it is zero.
\end{rem}

So, Remark \ref{LpHardy} shows that Part (ii), namely \eqref{LH2} implies Part (i) of Theorem \ref{RemSobolev}. Thus, for the part on Sobolev type inequality, we only need to prove Part (ii). Nevertheless, we also give an independent proof of \eqref{LpSobolevinC} for complex-valued functions without using the formula \eqref{EQ:Davies-rc}, especially since this calculation will be also useful in proving Part (ii) of Theorem \ref{RemSobolev}.

\begin{proof}[Proof of Theorem \ref{RemSobolev}]

Introducing polar coordinates $(r,y)=(|x|, \frac{x}{\mid x\mid})\in (0,\infty)\times\wp$ on $\mathbb{G}$, where the quasi-sphere $\wp$ is defined in \eqref{EQ:sphere}, and applying \eqref{EQ:polar} and integrating by parts, we get
$$
\int_{\mathbb{G}}
|f(x)|^{p}dx
=\int_{0}^{\infty}\int_{\wp}
|f(ry)|^{p} r^{Q-1}d\sigma(y)dr
$$
$$
=-\frac{p}{Q}\int_{0}^{\infty} r^{Q} \,{\rm Re}\int_{\wp}
|f(ry)|^{p-2} f(ry) \overline{\frac{df(ry)}{dr}}d\sigma(y)dr
$$
\begin{equation}\label{EQ:formula1}
=-\frac{p}{Q} {\rm Re}\int_{\mathbb{G}}
|f(x)|^{p-2}f(x)
\overline{\mathbb{E}f(x)}dx.
\end{equation}
Here using the H\"older's inequality for $\frac{1}{p}+\frac{1}{q}=1$ one calculates
$$\int_{\mathbb{G}}
|f(x)|^{p}dx
=-\frac{p}{Q} {\rm Re}\int_{\mathbb{G}}
|f(x)|^{p-2}f(x)
\overline{\mathbb{E}f(x)}dx
$$
$$
\leq \frac{p}{Q}\left(\int_{\mathbb{G}}
\left||f(x)|^{p-2}f(x)\right|^{q}dx\right)^{\frac{1}{q}}
\left(\int_{\mathbb{G}}
\left|\mathbb{E}f(x)\right|^{p}dx\right)^{\frac{1}{p}}
$$
$$
=\frac{p}{Q} \left(\int_{\mathbb{G}}
|f(x)|^{p}dx\right)^{1-\frac{1}{p}}
\left\|\mathbb{E}f(x)\right\|_{L^{p}(\mathbb{G})},
$$
which gives inequality \eqref{LpSobolevinC} in Part (i).

\medskip
We now prove Part (ii). Applying notations
$$u:=u(x)=-\frac{p}{Q}\mathbb{E} f,\;v:=v(x)=f(x),$$
formula \eqref{EQ:formula1}
can be rewritten as
\begin{equation}\label{vu2}
\|v\|_{L^{p}(\mathbb{G})}^{p}={\rm Re}\int_{\mathbb{G}}|v|^{p-2}v \overline{u} dx.
\end{equation}
For any real-valued functions $f$ formula \eqref{EQ:formula1} becomes
$$
\int_{\mathbb{G}}
|f(x)|^{p}dx
=-\frac{p}{Q} \int_{\mathbb{G}}
|f(x)|^{p-2}f(x)\mathbb{E}f(x)dx
$$
and \eqref{vu2} takes the form
\begin{equation}\label{vu}
\|v\|_{L^{p}(\mathbb{G})}^{p}=\int_{\mathbb{G}}|v|^{p-2}v u dx.
\end{equation}
On the other hand, for all $L^{p}$-integrable real-valued functions $u$ and $v$ we have
\begin{multline*}
\|u\|_{L^{p}(\mathbb{G})}^{p}-\|v\|_{L^{p}(\mathbb{G})}^{p}
+p\int_{\mathbb{G}}(|v|^{p}-|v|^{p-2}v u)dx \\
=\int_{\mathbb{G}}(|u|^{p}+(p-1)|v|^{p}-p|v|^{p-2}vu)dx\\
=p\int_{\mathbb{G}}I_{p}(v,u)|v-u|^{2}dx, \;1<p<\infty,
\end{multline*}
where
$$
I_{p}(v,u)=(p-1)\int_{0}^{1}|\xi v+(1-\xi)u|^{p-2}\xi d\xi.
$$
Combining this with \eqref{vu} one obtains
$$\left\|u\right\|^{p}_{L^{p}(\mathbb{G})}
-\left\|v\right\|^{p}_{L^{p}(\mathbb{G})}=p\int_{\mathbb{G}}I_{p}(v,u)|v-u|^{2}dx.
$$
The equality \eqref{LH2} is proved.

\medskip
Now we prove Part (iii). If $p=2$, the identity \eqref{vu2} becomes
\begin{equation}\label{vu3}
\|v\|_{L^{2}(\mathbb{G})}^{2}={\rm Re}\int_{\mathbb{G}}v \overline{u} dx.
\end{equation}
Then we get
\begin{align*}
\|u\|_{L^{2}(\mathbb{G})}^{2}-\|v\|_{L^{2}(\mathbb{G})}^{2} & =
\|u\|_{L^{2}(\mathbb{G})}^{2}-\|v\|_{L^{2}(\mathbb{G})}^{2}+
2\int_{\mathbb{G}} (|v|^{2}-{\rm Re}\, v \overline{u}) dx \\
& =
\int_{\mathbb{G}} (|u|^{2}+|v|^{2}-2{\rm Re}\, v \overline{u}) dx
\\ & =\int_{\mathbb{G}} |u-v|^{2} dx,
\end{align*}
which implies \eqref{EQ:expL2}.

Now let us show Part (iv). First we show that the inequality \eqref{LpSobolevinC} implies \eqref{Hardy}. Let $g=|x|f$. Then we have
\begin{align*}\|\mathbb{E}f(x)\|_{L^{2}(\mathbb{G})}^{2}=\left\|\mathbb{E}
	\frac{g}{|x|}\right\|_{L^{2}(\mathbb{G})}^{2}
&=\int_{0}^{\infty}\int_{\wp}
\left|\left(r\frac{d}{dr}\right)\frac{g(ry)}{r}\right|^{2}r^{Q-1}d\sigma(y)dr
\\&=\left\|-\frac{g}{|x|}+\frac{d}{d|x|}g\right\|_{L^{2}(\mathbb{G})}^{2}
\end{align*}
\begin{equation}\label{Eq1}
=\left\|\frac{g}{|x|}\right\|_{L^{2}(\mathbb{G})}^{2}-2 {\rm Re} \int_{\mathbb{G}}\frac{g}{|x|} \overline{\frac{d}{d|x|}g}dx+\left\|\frac{d}{d|x|}g\right\|_{L^{2}(\mathbb{G})}^{2}.
\end{equation}
Since
\begin{equation*}
-2 {\rm Re} \int_{\mathbb{G}}\frac{g}{|x|} \overline{\frac{d}{d|x|}g}dx=
-2 {\rm Re}\int_{0}^{\infty}\int_{\wp}
\frac{g(ry)}{r} \overline{\frac{d}{dr}g(ry)}r^{Q-1}d\sigma(y)dr
\end{equation*}
$$=-{\rm Re} \int_{0}^{\infty}\int_{\wp}
\frac{d}{dr}(|\overline{g}|^{2})r^{Q-2}d\sigma(y)dr=(Q-2){\rm Re} \int_{0}^{\infty}\int_{\wp}|\overline{g}|^{2}r^{Q-3}d\sigma(y)dr$$
$$=(Q-2)\left\|\frac{g}{|x|}\right\|_{L^{2}(\mathbb{G})}^{2},$$
it follows that
\begin{equation}\label{Eq01}
\|\mathbb{E}f(x)\|_{L^{2}(\mathbb{G})}^{2}=
(Q-1)\left\|\frac{g}{|x|}\right\|_{L^{2}(\mathbb{G})}^{2}
+\left\|\frac{d}{d|x|}g\right\|_{L^{2}(\mathbb{G})}^{2}.
\end{equation}
We see from \eqref{LpSobolevinC} and \eqref{Eq01} that
$$\left\|\frac{g}{|x|}\right\|_{L^{2}(\mathbb{G})}^{2}\leq\frac{4}{Q^2}
\left((Q-1)\left\|\frac{g}{|x|}\right\|_{L^{2}(\mathbb{G})}^{2}+
\left\|\frac{d}{d|x|}g\right\|_{L^{2}(\mathbb{G})}^{2}\right),$$
which gives \eqref{Hardy}.

Conversely, we assume that \eqref{Hardy} holds. For $f=g/|x|$ we have
$$\left\|\frac{d}{d|x|}\left(|x|f\right)\right\|_{L^{2}(\mathbb{G})}^{2}=
\left\|f+\mathbb{E}f\right\|_{L^{2}(\mathbb{G})}^{2}
=\left\|f\right\|_{L^{2}(\mathbb{G})}^{2}$$
$$+2 {\rm Re} \int_{\mathbb{G}}f(x) \overline{\mathbb{E}f(x)}dx+\|\mathbb{E}f\|_{L^{2}(\mathbb{G})}^{2}.$$
Since by \eqref{L2:E5}, it follows from \eqref{Hardy} that
$$\left\|f\right\|_{L^{2}(\mathbb{G})}^{2}\leq\frac{4}{(Q-2)^2}\left(\|\mathbb{E}f\|_{L^{2}(\mathbb{G})}^{2}-(Q-1)\left\|f\right\|_{L^{2}(\mathbb{G})}^{2}\right),$$
which implies
$$\left\|f\right\|_{L^{2}(\mathbb{G})}\leq\frac{2}{Q}\|\mathbb{E}
f\|_{L^{2}(\mathbb{G})}.$$

Let us now prove Part (v). We will prove that the inequality \eqref{LpSobolevinC} gives \eqref{Hardy_p}. We have
$$\|\mathcal{R}(|x|f)\|_{L^{p}(\mathbb{G})}=\|\mathbb{E}f+f\|_{L^{p}(\mathbb{G})}\geq
\|\mathbb{E}f\|_{L^{p}(\mathbb{G})}-\|f\|_{L^{p}(\mathbb{G})}.$$
Finally, by using the inequality \eqref{LpSobolevinC} we have
$$\|\mathcal{R}(|x|f)\|_{L^{p}(\mathbb{G})}\geq\frac{Q-p}{p}\|f\|_{L^{p}(\mathbb{G})},$$
which implies the Hardy inequality \eqref{Hardy_p}.
\end{proof}
Now we establish weighted $L^{p}$-Sobolev type inequalities on the homogeneous group
$\mathbb{G}$.

\begin{thm}\label{L_p_weighted_th}
Let $\mathbb{G}$ be a homogeneous group
of homogeneous dimension $Q$ and let $\alpha\in \mathbb{R}$.
Then for all complex-valued functions $f\in C^{\infty}_{0}(\mathbb{G}\backslash\{0\}),$ $1<p<\infty,$ and any homogeneous quasi-norm $|\cdot|$ on $\mathbb{G}$ for $\alpha p \neq Q$ we have
\begin{equation}\label{L_p_weighted}
\left\|\frac{f}{|x|^{\alpha}}\right\|_{L^{p}(\mathbb{G})}\leq
\left|\frac{p}{Q-\alpha p}\right|\left\|\frac{1}{|x|^{\alpha}}\mathbb{E} f\right\|_{L^{p}(\mathbb{G})}.
\end{equation}
If $\alpha p\neq Q$ then the constant $\left|\frac{p}{Q-\alpha p}\right|$ is sharp.

For $\alpha p=Q$ we have
\begin{equation}\label{L_p_weighted_log}
\left\|\frac{f}{|x|^{\frac{Q}{p}}}\right\|_{L^{p}(\mathbb{G})}\leq
p\left\|\frac{\log|x|}{|x|^{\frac{Q}{p}}}\mathbb{E} f\right\|_{L^{p}(\mathbb{G})},
\end{equation}
where the constant $p$ is sharp.
\end{thm}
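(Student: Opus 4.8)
The plan is to follow the strategy used for Theorem~\ref{RemSobolev}(i): pass to polar coordinates via \eqref{EQ:polar}, integrate by parts in the radial variable to generate the constant, and then close the estimate by Hölder's inequality. Writing $x=ry$ with $r=|x|$ and $y\in\wp$, and using \eqref{EQ:polar},
\[
\int_{\mathbb{G}}\frac{|f(x)|^{p}}{|x|^{\alpha p}}\,dx=\int_{0}^{\infty}\int_{\wp}|f(ry)|^{p}\,r^{Q-\alpha p-1}\,d\sigma(y)\,dr .
\]
When $Q-\alpha p\neq 0$, an antiderivative of $r^{Q-\alpha p-1}$ is $\frac{1}{Q-\alpha p}r^{Q-\alpha p}$, so integrating by parts in $r$ (the boundary terms vanish since $f\in C_{0}^{\infty}(\mathbb{G}\backslash\{0\})$) and using $\frac{d}{dr}f(ry)=\frac{1}{r}\mathbb{E}f(ry)$ gives the identity
\[
\int_{\mathbb{G}}\frac{|f(x)|^{p}}{|x|^{\alpha p}}\,dx=-\frac{p}{Q-\alpha p}\,{\rm Re}\int_{\mathbb{G}}\frac{|f(x)|^{p-2}f(x)\,\overline{\mathbb{E}f(x)}}{|x|^{\alpha p}}\,dx .
\]
Writing the weight as $|x|^{-\alpha p}=|x|^{-\alpha(p-1)}|x|^{-\alpha}$ and applying Hölder's inequality with exponents $\frac{p}{p-1}$ and $p$ yields
\[
\left\|\frac{f}{|x|^{\alpha}}\right\|^{p}_{L^{p}(\mathbb{G})}\leq\left|\frac{p}{Q-\alpha p}\right|\left\|\frac{f}{|x|^{\alpha}}\right\|^{p-1}_{L^{p}(\mathbb{G})}\left\|\frac{1}{|x|^{\alpha}}\mathbb{E}f\right\|_{L^{p}(\mathbb{G})},
\]
and dividing by the first factor on the right (nonzero unless $f\equiv 0$) gives \eqref{L_p_weighted}. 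Note that, unlike the unweighted case of Theorem~\ref{RemSobolev}, this Hölder argument applies directly to complex-valued $f$, so no symmetrisation device is needed.

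For the borderline exponent $\alpha p=Q$ the computation is identical, except that now an antiderivative of $r^{Q-\alpha p-1}=r^{-1}$ is $\log r=\log|x|$, so the integration by parts instead produces
\[
\int_{\mathbb{G}}\frac{|f(x)|^{p}}{|x|^{Q}}\,dx=-p\,{\rm Re}\int_{\mathbb{G}}\frac{(\log|x|)\,|f(x)|^{p-2}f(x)\,\overline{\mathbb{E}f(x)}}{|x|^{Q}}\,dx ,
\]
and the same Hölder argument, now keeping $\log|x|$ together with $\mathbb{E}f$, gives \eqref{L_p_weighted_log} with constant $p$.

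It remains to prove sharpness of the constants. Equality in the Hölder step forces $|\mathbb{E}f|$ to be proportional to $|f|$, hence by Lemma~\ref{L:Euler} the natural candidate extremiser is the positively homogeneous function $f(x)=|x|^{\alpha-Q/p}$; substituting it formally into \eqref{L_p_weighted} indeed gives equality, with the ratio of the two sides equal to $\left|\frac{p}{Q-\alpha p}\right|$. Since this function is not admissible (the relevant radial integral $\int r^{-1}\,dr$ diverges at both endpoints), I would take a family $f_{\varepsilon}$ obtained from $|x|^{\alpha-Q/p}$ by inserting smooth radial cut-offs near $0$ and near $\infty$, evaluate the two norms, and let the cut-off parameters degenerate, checking that the quotient of the two sides of \eqref{L_p_weighted} tends to $\left|\frac{p}{Q-\alpha p}\right|$. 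For \eqref{L_p_weighted_log} one first reduces to radial $f$ and, via $t=\log|x|$, to the one-dimensional inequality $\|h\|_{L^{p}(\mathbb{R})}\le p\,\|t\,h'\|_{L^{p}(\mathbb{R})}$ (proved by the same integration by parts plus Hölder), whose constant $p$ is sharp by truncating $|t|^{-1/p}$; transporting this family back to $\mathbb{G}$ furnishes the sharp test functions. The main obstacle I anticipate is precisely this truncation analysis --- controlling the lower-order logarithmic terms arising from differentiating the cut-offs and verifying that they do not perturb the limiting ratio --- since everything else reduces to the exact identities above, requiring only the polar decomposition and a single integration by parts.
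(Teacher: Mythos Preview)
Your proof of the two inequalities \eqref{L_p_weighted} and \eqref{L_p_weighted_log} is essentially identical to the paper's: polar decomposition, integration by parts in $r$ (with antiderivative $\tfrac{1}{Q-\alpha p}r^{Q-\alpha p}$, resp.\ $\log r$), then H\"older with the same splitting of the weight.

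The only real difference is in the sharpness argument. The paper does not set up a truncation or pass through the one-dimensional model at all: it simply observes that the power functions $g(x)=|x|^{-C}$ (resp.\ $h(x)=(\log|x|)^{C}$ in the critical case) satisfy the \emph{pointwise} equality condition in the H\"older step, and takes this as establishing optimality of the constant. So the ``main obstacle'' you anticipate --- controlling the lower-order terms coming from differentiating cut-offs --- is bypassed entirely in the paper. Your proposed route is more laborious but also more complete, since the paper's extremisers are not in $C_{0}^{\infty}(\mathbb{G}\backslash\{0\})$ and the implicit approximation is not written out; your truncation plan is precisely what would make that step rigorous. For the critical case your substitution $t=\log|x|$ and test family $|t|^{-1/p}$ amount to the choice $C=-1/p$ in the paper's family $(\log|x|)^{C}$, so the two viewpoints coincide.
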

\begin{rem}\label{L_p_weighted_th_rem} In \cite[Theorem 1.2 and Theorem 7.1]{RSY17}, using these inequalities \eqref{L_p_weighted} and \eqref{L_p_weighted_log} we obtained an extended version of classical Caffarelli-Kohn-Nirenberg inequalities with respect to ranges of parameters, which are new in the Euclidean setting of $\mathbb{R}^{n}$, as well as on homogeneous groups. Moreover, our methods give an improvement by replacing the full gradient by the radial derivative. In \cite[Theorem 3.1]{LRY17}, applying Theorem \ref{L_p_weighted_th} the authors proved Hardy inequalities for the quadratic form of the Laplacian with the Landau Hamiltonian magnetic field.
\end{rem}
\begin{proof}[Proof of Theorem \ref{L_p_weighted_th}]
Using integration by parts, for $\alpha p \neq Q$ we obtain
\begin{multline*}
\int_{\mathbb{G}}\frac{|f(x)|^{p}}{|x|^{\alpha p}}dx=\int_{0}^{\infty}\int_{\wp}|f(ry)|^{p}r^{Q-1-\alpha p}d\sigma(y)dr\\
=-\frac{p}{Q-\alpha p}\int_{0}^{\infty} r^{Q-\alpha p} {\rm Re} \int_{\wp}|f(ry)|^{p-2} f(ry) \overline{\frac{df(ry)}{dr}}d\sigma(y)dr\\
\leq \left|\frac{p}{Q-\alpha p}\right|\int_{\mathbb{G}}\frac{|\mathbb{E}f(x)||f(x)|^{p-1}}{|x|^{\alpha p}}dx=
\left|\frac{p}{Q-\alpha p}\right|\int_{\mathbb{G}}\frac{|\mathbb{E}f(x)||f(x)|^{p-1}}{|x|^{\alpha+\alpha (p-1)}}dx.
\end{multline*}
By H\"{o}lder's inequality, it follows that
$$
\int_{\mathbb{G}}\frac{|f(x)|^{p}}{|x|^{\alpha p}}dx\leq \left|\frac{p}{Q-\alpha p}\right|\left(\int_{\mathbb{G}}\frac{|\mathbb{E}f(x)|^{p}}{|x|^{\alpha p}}dx\right)
^{\frac{1}{p}}\left(\int_{\mathbb{G}}\frac{|f(x)|^{p}}{|x|^{\alpha p}}dx\right)^{\frac{p-1}{p}},
$$
which gives \eqref{L_p_weighted}.

Now we show the sharpness of the constant. We need to check the equality
condition in above H\"older's inequality.
Let us consider the function
$$g(x)=\frac{1}{|x|^{C}}, $$
where $C\in\mathbb{R}, C\neq 0$ and $\alpha p\neq Q$. Then by a direct calculation we obtain
\begin{equation}\label{Holder_eq1}
\left|\frac{1}{C}\right|^{p}\left(\frac{|\mathbb{E}g(x)|}{|x|^{\alpha }}\right)^{p}=\left(\frac{|g(x)|^{p-1}}
{|x|^{\alpha (p-1)}}\right)^{\frac{p}{p-1}},
\end{equation}
which satisfies the equality condition in H\"older's inequality.
This gives the sharpness of the constant $\left|\frac{p}{Q-\alpha p}\right|$ in \eqref{L_p_weighted}.

Now let us prove \eqref{L_p_weighted_log}. Using integration by parts, we have
\begin{multline*}
\int_{\mathbb{G}}\frac{|f(x)|^{p}}{|x|^{Q}}dx=\int_{0}^{\infty}\int_{\wp}|f(ry)|^{p}r^{Q-1-Q}d\sigma(y)dr\\
=-p\int_{0}^{\infty} \log r {\rm Re} \int_{\wp}|f(ry)|^{p-2} f(ry) \overline{\frac{df(ry)}{dr}}d\sigma(y)dr\\
\leq p \int_{\mathbb{G}}\frac{|\mathbb{E}f(x)||f(x)|^{p-1}}{|x|^{Q}}|\log|x||dx=
p\int_{\mathbb{G}}\frac{|\mathbb{E}f(x)|\log|x|||}{|x|^{\frac{Q}{p}}}\frac{|f(x)|^{p-1}}{|x|^{\frac{Q(p-1)}{p}}}dx.
\end{multline*}
By H\"{o}lder's inequality, it follows that
$$
\int_{\mathbb{G}}\frac{|f(x)|^{p}}{|x|^{Q}}dx\leq p\left(\int_{\mathbb{G}}\frac{|\mathbb{E}f(x)|^{p}|\log|x||^{p}}{|x|^{Q}}dx\right)
^{\frac{1}{p}}\left(\int_{\mathbb{G}}\frac{|f(x)|^{p}}{|x|^{Q}}dx\right)^{\frac{p-1}{p}},
$$
which gives \eqref{L_p_weighted_log}.

Now we show the sharpness of the constant. We need to check the equality
condition in above H\"older's inequality.
Let us consider the function
$$h(x)=(\log|x|)^{C},$$
where $C\in\mathbb{R}$ and $C\neq 0$.
Then by a direct calculation we obtain
\begin{equation}\label{Holder_eq2}
\left|\frac{1}{C}\right|^{p}\left(\frac{|\mathbb{E}h(x)||\log|x||}{|x|^{\frac{Q}{p}}}\right)^{p}=\left(\frac{|h(x)|^{p-1}}
{|x|^{\frac{Q (p-1)}{p}}}\right)^{\frac{p}{p-1}},
\end{equation}
which satisfies the equality condition in H\"older's inequality.
This gives the sharpness of the constant $p$ in \eqref{L_p_weighted_log}.
\end{proof}

Let us consider separately the case $p=2$.

\begin{thm}\label{aSobolev}
Let $\mathbb{G}$ be a homogeneous group of homogeneous dimension $Q$ and
let $|\cdot|$ be any homogeneous quasi-norm on $\mathbb{G}$. Then for every complex-valued function $f\in C^{\infty}_{0}(\mathbb{G}\backslash\{0\})$
we have
\begin{multline}\label{awS}
\left\|\frac{1}{|x|^{\alpha}}\mathbb{E} f\right\|^{2}_{L^{2}(\mathbb{G})}=
\left(\frac{Q}{2}-\alpha\right)^{2}
\left\|\frac{f}{|x|^{\alpha}}\right\|^{2}_{L^{2}(\mathbb{G})}
+\left\|\frac{1}{|x|^{\alpha}}\mathbb{E} f+\frac{Q-2\alpha}{2|x|^{\alpha}}f
\right\|^{2}_{L^{2}(\mathbb{G})}
\end{multline}
for any $\alpha\in\mathbb{R}.$
\end{thm}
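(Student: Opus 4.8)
The plan is to prove the identity \eqref{awS} by expanding the square on the right-hand side and reducing everything to a single integration-by-parts identity in the radial variable, in the spirit of the proof of Part (iii) of Theorem \ref{RemSobolev} and of Lemma \ref{L2:E2}. Write, for brevity, $A:=|x|^{-\alpha}\mathbb{E}f$ and $B:=\tfrac{Q-2\alpha}{2}\,|x|^{-\alpha}f$, both of which belong to $L^{2}(\mathbb{G})$ because $f\in C_{0}^{\infty}(\mathbb{G}\backslash\{0\})$. Then $\|A+B\|_{L^{2}(\mathbb{G})}^{2}=\|A\|_{L^{2}(\mathbb{G})}^{2}+2\,{\rm Re}\,\langle A,B\rangle_{L^{2}(\mathbb{G})}+\|B\|_{L^{2}(\mathbb{G})}^{2}$, and since $\|B\|_{L^{2}(\mathbb{G})}^{2}=\bigl(\tfrac{Q-2\alpha}{2}\bigr)^{2}\|f/|x|^{\alpha}\|_{L^{2}(\mathbb{G})}^{2}=\bigl(\tfrac{Q}{2}-\alpha\bigr)^{2}\|f/|x|^{\alpha}\|_{L^{2}(\mathbb{G})}^{2}$, the claimed identity \eqref{awS} is equivalent to
\[
2\,{\rm Re}\,\langle A,B\rangle_{L^{2}(\mathbb{G})}=-2\Bigl(\tfrac{Q}{2}-\alpha\Bigr)^{2}\Bigl\|\tfrac{f}{|x|^{\alpha}}\Bigr\|_{L^{2}(\mathbb{G})}^{2},
\]
that is, to the single identity
\[
{\rm Re}\int_{\mathbb{G}}\frac{\mathbb{E}f(x)\,\overline{f(x)}}{|x|^{2\alpha}}\,dx=-\frac{Q-2\alpha}{2}\int_{\mathbb{G}}\frac{|f(x)|^{2}}{|x|^{2\alpha}}\,dx .
\]

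Next I would establish this last identity using the polar decomposition \eqref{EQ:polar} together with $\mathbb{E}=|x|\mathcal{R}$ from \eqref{EQ:def-Euler} and the relation \eqref{dfdr}. Passing to polar coordinates $x=ry$, $(r,y)\in(0,\infty)\times\wp$, the $r^{Q-1}$ from the Haar measure combines with the extra factor $r$ coming from $\mathbb{E}$ and with the weight $r^{-2\alpha}$, so the left-hand side becomes
\[
{\rm Re}\int_{0}^{\infty}\!\!\int_{\wp}\frac{df(ry)}{dr}\,\overline{f(ry)}\,r^{Q-2\alpha}\,d\sigma(y)\,dr=\frac12\int_{0}^{\infty}\!\!\int_{\wp}\frac{d}{dr}\bigl(|f(ry)|^{2}\bigr)\,r^{Q-2\alpha}\,d\sigma(y)\,dr .
\]
Integrating by parts in $r$, with vanishing boundary contributions at $r=0$ and $r=\infty$ since $f$ is compactly supported in $\mathbb{G}\backslash\{0\}$, this equals $-\tfrac{Q-2\alpha}{2}\int_{0}^{\infty}\!\int_{\wp}|f(ry)|^{2}\,r^{Q-1-2\alpha}\,d\sigma(y)\,dr$, which is exactly $-\tfrac{Q-2\alpha}{2}\|f/|x|^{\alpha}\|_{L^{2}(\mathbb{G})}^{2}$ by \eqref{EQ:polar}. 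Substituting this back into the expansion of $\|A+B\|_{L^{2}(\mathbb{G})}^{2}$ above then yields \eqref{awS}.

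I do not expect a serious obstacle here. The only points requiring a little care are the justification of the integration by parts (the boundary terms vanish precisely because $f\in C_{0}^{\infty}(\mathbb{G}\backslash\{0\})$, which is the same mechanism used in \eqref{L2:E5}) and the bookkeeping of the elementary identity $\bigl(\tfrac{Q}{2}-\alpha\bigr)^{2}=\bigl(\tfrac{Q-2\alpha}{2}\bigr)^{2}$. The case $\alpha=0$ recovers \eqref{EQ:expL2}, and the computation above is simply its weighted analogue; alternatively one could reach \eqref{awS} by feeding $|x|^{-\alpha}f$ into an \eqref{EQ:expL2}-type argument, but the direct radial computation is cleaner and more transparent.
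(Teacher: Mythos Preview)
Your argument is correct. Expanding $\|A+B\|_{L^2}^2$ and reducing \eqref{awS} to the single radial identity
\[
{\rm Re}\int_{\mathbb{G}}\frac{(\mathbb{E}f)\,\overline{f}}{|x|^{2\alpha}}\,dx=-\frac{Q-2\alpha}{2}\left\|\frac{f}{|x|^{\alpha}}\right\|_{L^{2}(\mathbb{G})}^{2}
\]
is valid, and your computation of that identity via polar coordinates and integration by parts is clean; the boundary terms vanish exactly as you say, and the case $Q=2\alpha$ causes no trouble since both sides of the key identity are then zero.

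Your route differs from the paper's. The paper first establishes the commutator relation $|x|^{-\alpha}\mathbb{E}f=\mathbb{E}(|x|^{-\alpha}f)+\alpha|x|^{-\alpha}f$, expands the left-hand side of \eqref{awS} accordingly, and then \emph{invokes the already-proved unweighted identity} \eqref{EQ:expL2} applied to $|x|^{-\alpha}f$; a separate polar-coordinates computation handles the cross term $2\alpha\,{\rm Re}\langle \mathbb{E}(|x|^{-\alpha}f),|x|^{-\alpha}f\rangle$. In other words, the paper reduces the weighted case to the unweighted one. Your approach is more self-contained: it bypasses both the commutator identity and the appeal to \eqref{EQ:expL2}, collapsing everything into a single weighted integration by parts. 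What the paper's approach buys is modularity (the $\alpha$-case is literally seen as a corollary of $\alpha=0$), while yours buys directness and transparency; you even anticipate the paper's alternative in your closing remark.
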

From \eqref{awS} one can get different equalities and inequalities. For example, for $\alpha=1$, we obtain the equality
\begin{equation}\label{47-0}
\left\|\frac{1}{|x|}\mathbb{E} f\right\|^{2}_{L^{2}(\mathbb{G})}=\left(\frac{Q-2}{2}\right)^{2}\left\|\frac{f}{|x|}
\right\|^{2}_{L^{2}(\mathbb{G})}+\left\|\frac{1}{|x|}\mathbb{E} f+\frac{Q-2}{2|x|}f
\right\|^{2}_{L^{2}(\mathbb{G})}.
\end{equation}

By dropping the nonnegative last term in \eqref{awS} we immediately get the following statement:

\begin{cor}\label{waSobolev}
Let $\mathbb{G}$ be a homogeneous group of homogeneous dimension $Q$ and let $|\cdot|$ be any homogeneous quasi-norm on $\mathbb{G}$. Let $\alpha\in\mathbb R$ and $Q-2\alpha\neq0$.
Then we have
\begin{equation}\label{awSoboleveq-g}
\left\|\frac{f}{|x|^{\alpha}}\right\|_{L^{2}(\mathbb{G})}\leq\frac{2}{|Q-2\alpha|}
\left\|\frac{1}{|x|^{\alpha}}\mathbb{E} f\right\|_{L^{2}(\mathbb{G})},
\end{equation}
for all complex-valued functions $f\in C^{\infty}_{0}(\mathbb{G}\backslash\{0\})$, where the constant in \eqref{awSoboleveq-g} is sharp and the equality is attained if and only if $f=0$.
\end{cor}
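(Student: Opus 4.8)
The plan is to deduce everything from the exact identity \eqref{awS} of Theorem \ref{aSobolev}. First I would observe that the last term on the right-hand side of \eqref{awS}, namely $\left\|\frac{1}{|x|^{\alpha}}\mathbb{E} f+\frac{Q-2\alpha}{2|x|^{\alpha}}f\right\|^{2}_{L^{2}(\mathbb{G})}$, is manifestly nonnegative; dropping it yields
$$\left(\frac{Q}{2}-\alpha\right)^{2}\left\|\frac{f}{|x|^{\alpha}}\right\|^{2}_{L^{2}(\mathbb{G})}\le \left\|\frac{1}{|x|^{\alpha}}\mathbb{E} f\right\|^{2}_{L^{2}(\mathbb{G})},$$
which, after taking square roots and dividing by $\left|\frac{Q}{2}-\alpha\right|\neq 0$, is exactly \eqref{awSoboleveq-g}. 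This part is immediate and needs no further work.

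Next I would settle the equality case. If equality holds in \eqref{awSoboleveq-g} for some $f\in C^{\infty}_{0}(\mathbb{G}\backslash\{0\})$, then by \eqref{awS} the discarded term must vanish, i.e. $\mathbb{E} f=-\frac{Q-2\alpha}{2}f$ almost everywhere, hence everywhere by smoothness. Lemma \ref{L:Euler} then forces $f$ to be positively homogeneous of order $-\frac{Q-2\alpha}{2}$, so that $f(x)=|x|^{-\frac{Q-2\alpha}{2}}h\!\left(\frac{x}{|x|}\right)$ for some $h:\wp\to\mathbb C$; such a function cannot be compactly supported in $\mathbb{G}\backslash\{0\}$ unless $h\equiv 0$, i.e. $f\equiv 0$. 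The converse is trivial, so equality holds precisely when $f=0$. This reproduces the reasoning already used in Remark \ref{LpHardy}.

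Finally, for the sharpness of the constant $\frac{2}{|Q-2\alpha|}$ I would note that \eqref{awSoboleveq-g} is precisely the case $p=2$ of the weighted inequality \eqref{L_p_weighted} in Theorem \ref{L_p_weighted_th}, since $\left|\frac{p}{Q-\alpha p}\right|=\frac{2}{|Q-2\alpha|}$ when $p=2$ and the hypothesis $\alpha p\neq Q$ coincides with $Q-2\alpha\neq 0$; hence the sharpness established there (via the near-extremal functions $|x|^{-C}$ and the equality condition in Hölder's inequality) applies verbatim. For a self-contained argument one instead builds a minimizing sequence $f_{j}\in C^{\infty}_{0}(\mathbb{G}\backslash\{0\})$ by truncating the formal extremizer $|x|^{-\frac{Q-2\alpha}{2}}$ with smooth radial cutoffs supported in annuli $\{e^{-j}<|x|<e^{j}\}$ and checking that the remainder $\left\|\frac{1}{|x|^{\alpha}}\mathbb{E} f_{j}+\frac{Q-2\alpha}{2|x|^{\alpha}}f_{j}\right\|^{2}_{L^{2}(\mathbb{G})}$, which is supported only in the thin transition layers, is $o(1)$ relative to $\left\|\frac{f_{j}}{|x|^{\alpha}}\right\|^{2}_{L^{2}(\mathbb{G})}$ as $j\to\infty$; then \eqref{awS} shows the quotient of the two sides of \eqref{awSoboleveq-g} converges to $\frac{2}{|Q-2\alpha|}$. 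The only genuinely technical point is this cutoff estimate, controlling the transition-layer contribution; everything else is a direct consequence of the identity \eqref{awS}.
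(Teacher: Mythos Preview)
Your proposal is correct and follows essentially the same route as the paper: the inequality comes from dropping the nonnegative remainder in the identity \eqref{awS}, and the equality case is handled exactly as in Remark \ref{LpHardy} via Lemma \ref{L:Euler} and the observation that a nontrivial positively homogeneous function cannot lie in $C^{\infty}_{0}(\mathbb{G}\backslash\{0\})$. For sharpness the paper is even terser than you are---it simply says the constant and equality statements follow ``by the same argument as that in Remark \ref{LpHardy}''---whereas you supply two explicit routes (the $p=2$ case of Theorem \ref{L_p_weighted_th}, or a direct cutoff of the formal extremizer $|x|^{-(Q-2\alpha)/2}$), both of which are valid and in the same spirit.
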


This statement on the constant and the equality follows by the same argument as that in Remark \ref{LpHardy}. We note a special case of $\alpha=1$, then \eqref{47-0}
gives the inequality
\begin{equation}\label{awSoboleveq}
\left\|\frac{f}{|x|^{}}
\right\|_{L^{2}(\mathbb{G})}\leq
\frac{2}{Q-2}\left\|\frac{1}{|x|}\mathbb{E} f\right\|_{L^{2}(\mathbb{G})}, \quad Q\geq 3,
\end{equation}
with sharp constant.

In the case $\alpha=0$, the identity \eqref{awS} recovers Part (iii) of Theorem \ref{RemSobolev}. However, in the proof of Theorem \ref{aSobolev} we will use Part (iii) of Theorem \ref{RemSobolev}.

\begin{proof}[Proof of Theorem \ref{aSobolev}]
For any $\alpha\in\mathbb{R}$ we note the following equality
\begin{equation}\label{EQ:eqE}
\frac{1}{|x|^{\alpha}}\mathbb{E} f=\mathbb{E} \frac{f}{|x|^{\alpha}}
+\alpha \frac{f}{|x|^{\alpha}}.
\end{equation}
Indeed, the equality \eqref{EQ:eqE} follows from
$$
\mathbb{E} \frac{f}{|x|^{\alpha}}=\frac{1}{|x|^{\alpha}}\mathbb{E} f+f \mathbb{E}  \frac{1}{|x|^{\alpha}}
$$
and utilising \eqref{dfdr} and \eqref{EQ:def-Euler},
$$
\mathbb{E}  \frac{1}{|x|^{\alpha}}=r\frac{d}{dr}\frac{1}{r^{\alpha}}=-\alpha\frac{1}{r^{\alpha}}=
-\alpha\frac{1}{|x|^{\alpha}},\quad r=|x|.
$$
Then we obtain
\begin{multline}
\left\|\frac{1}{|x|^{\alpha}}\mathbb{E} f\right\|^{2}_{L^{2}(\mathbb{G})}=\left\|\mathbb{E} \frac{f}{|x|^{\alpha}}
+\frac{\alpha f}{|x|^{\alpha}}
\right\|^{2}_{L^{2}(\mathbb{G})}\\
=\left\|\mathbb{E} \frac{f}{|x|^{\alpha}}
\right\|^{2}_{L^{2}(\mathbb{G})}+2\alpha{\rm Re}\int_{\mathbb{G}}
\mathbb{E} \left(\frac{f}{|x|^{\alpha}}\right)
\overline{\frac{f}{|x|^{\alpha}}}dx+\left\|\frac{\alpha f}{|x|^{\alpha}}
\right\|^{2}_{L^{2}(\mathbb{G})}.
\end{multline}
In \eqref{EQ:expL2} replacing $f$ by $\frac{f}{|x|^{\alpha}}$ and
using \eqref{EQ:eqE} we have that
\begin{equation}
\left\|\mathbb{E} \frac{f}{|x|^{\alpha}}
\right\|^{2}_{L^{2}(\mathbb{G})}=\left(\frac{Q}{2}\right)^{2}\left\|\frac{f}{|x|^{\alpha}}
\right\|^{2}_{L^{2}(\mathbb{G})}+\left\|\frac{1}{|x|^{\alpha}}\mathbb{E} f+\frac{Q-2\alpha}{2|x|^{\alpha}}f
\right\|^{2}_{L^{2}(\mathbb{G})}.
\end{equation}
Using formula \eqref{EQ:polar} for polar coordinates, one obtains
\begin{multline}
2\alpha{\rm Re}\int_{\mathbb{G}}
\mathbb{E} \left(\frac{f}{|x|^{\alpha}}\right)
\frac{\overline{f}}{|x|^{\alpha}}dx=2 \alpha{\rm Re}\int_{0}^{\infty}r^{Q-1}\int_{\wp}
r\frac{d}{dr}\left(\frac{f(ry)}
{r^{\alpha}}\right)\frac{\overline{f(ry)}}{r^{\alpha}}d\sigma(y)dr
\\
=\alpha\int_{0}^{\infty}r^{Q}\int_{\wp}
\frac{d}{dr}\left(\frac{|f(ry)|^{2}}
{r^{2\alpha}}\right)d\sigma(y)dr
=-\alpha Q\left\|\frac{f}{|x|^{\alpha}}
\right\|^{2}_{L^{2}(\mathbb{G})}.
\end{multline}
Summing up all above we arrive at
\begin{equation*}
\left\|\frac{1}{|x|^{\alpha}}\mathbb{E} f\right\|^{2}_{L^{2}(\mathbb{G})}=\left(\frac{Q}{2}-\alpha\right)^{2}\left\|\frac{f}{|x|^{\alpha}}
\right\|^{2}_{L^{2}(\mathbb{G})}+\left\|\frac{1}{|x|^{\alpha}}\mathbb{E} f+\frac{Q-2\alpha}{2|x|^{\alpha}}f
\right\|^{2}_{L^{2}(\mathbb{G})},
\end{equation*}
which implies \eqref{awS}.
\end{proof}

\subsection{Higher order Sobolev-Rellich inequalities}
\label{SEC:ho}

By iterating the established weighted Sobolev inequality \eqref{L_p_weighted} one obtains inequalities of higher order. Let us state the following:

\begin{cor} Let $\mathbb{G}$ be a homogeneous group of homogeneous dimension $Q$ and
let $|\cdot|$ be any homogeneous quasi-norm on $\mathbb{G}$. Let $1<p<\infty$, $k\in\mathbb{N}$ and $\alpha\in\mathbb{R}$ be such that $Q\neq \alpha p$. Then for any complex-valued function $f\in C^{\infty}_{0}(\mathbb{G}\backslash\{0\})$ we have
\begin{equation} \label{Lp_Sob_highorder}
\left\|\frac{f}{|x|^{\alpha}}\right\|_{L^{p}(\mathbb{G})}\leq
\left|\frac{p}{Q-\alpha p}\right|^{k}\left\|\frac{\mathbb{E}^{k}f}{|x|^{\alpha}}\right\|_{L^{p}(\mathbb{G})}.
\end{equation}
\end{cor}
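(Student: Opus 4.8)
The plan is to obtain \eqref{Lp_Sob_highorder} by iterating the weighted Sobolev inequality \eqref{L_p_weighted} of Theorem~\ref{L_p_weighted_th}. The one preliminary point that must be secured before iterating is that $\mathbb{E}$ maps $C_0^\infty(\mathbb{G}\backslash\{0\})$ into itself, so that $\mathbb{E}^{j}f$ is an admissible test function at every stage. I would verify this as follows: writing $x=ry$ with $r=|x|$ and $y\in\wp$, and using $x=D_ry$, one has $D_{e^{t}}x=(e^{t}r)y$, so by \eqref{dfdr} and \eqref{EQ:def-Euler},
\begin{equation*}
\mathbb{E}f(x)=r\frac{d}{dr}f(ry)=\frac{d}{dt}\,f(D_{e^{t}}x)\Big|_{t=0}.
\end{equation*}
Thus $\mathbb{E}$ is the infinitesimal generator of the dilation flow and, in exponential coordinates on $\mathbb{G}$, it is the smooth vector field $\sum_{j=1}^{n}\nu_j x_j\,\partial_{x_j}$; in particular it is independent of the chosen quasi-norm. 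Consequently $\mathbb{E}f\in C^\infty(\mathbb{G}\backslash\{0\})$ whenever $f$ is, and since $\mathbb{E}$ is a first-order differential operator, ${\rm supp}\,(\mathbb{E}f)\subseteq{\rm supp}\, f$, which is a compact subset of $\mathbb{G}\backslash\{0\}$. By induction, $\mathbb{E}^{j}f\in C_0^\infty(\mathbb{G}\backslash\{0\})$ for every $j\in\mathbb{N}$.

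Granting this, the plan is to apply \eqref{L_p_weighted} $k$ times in succession. The first application gives
\begin{equation*}
\left\|\frac{f}{|x|^{\alpha}}\right\|_{L^{p}(\mathbb{G})}\leq\left|\frac{p}{Q-\alpha p}\right|\left\|\frac{\mathbb{E}f}{|x|^{\alpha}}\right\|_{L^{p}(\mathbb{G})}.
\end{equation*}
Since $\mathbb{E}f\in C_0^\infty(\mathbb{G}\backslash\{0\})$ and the parameters $\alpha$ and $p$ are unchanged (in particular the hypothesis $Q\neq\alpha p$ continues to hold), \eqref{L_p_weighted} may be applied again with $f$ replaced by $\mathbb{E}f$, then by $\mathbb{E}^{2}f$, and so on. Composing the $k$ resulting one-step estimates yields
\begin{equation*}
\left\|\frac{f}{|x|^{\alpha}}\right\|_{L^{p}(\mathbb{G})}\leq\left|\frac{p}{Q-\alpha p}\right|^{k}\left\|\frac{\mathbb{E}^{k}f}{|x|^{\alpha}}\right\|_{L^{p}(\mathbb{G})},
\end{equation*}
which is \eqref{Lp_Sob_highorder}. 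Equivalently, one may phrase the argument as an induction on $k$: the base case $k=1$ is exactly \eqref{L_p_weighted}, and the inductive step applies the case $k=1$ to $\mathbb{E}^{k-1}f$ and the inductive hypothesis (the case $k-1$) to $f$.

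I do not expect a genuine obstacle here. The only step needing attention is the admissibility of $\mathbb{E}^{j}f$ as a test function at each iteration, which is precisely the identification of $\mathbb{E}$ with the (smooth) dilation generator described above; and the exponent $k$ on the constant appears automatically, since the one-step constant $\bigl|\tfrac{p}{Q-\alpha p}\bigr|$ depends only on $\alpha$ and $p$, neither of which changes when $f$ is replaced by $\mathbb{E}f$. (If one additionally wished to record sharpness of the constant in \eqref{Lp_Sob_highorder} — which the statement does not claim — one would test against functions behaving like $|x|^{-C}$, as in the proof of Theorem~\ref{L_p_weighted_th}, but this is not required here.)
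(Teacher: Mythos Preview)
Your proposal is correct and follows exactly the approach indicated in the paper, which states the corollary as an immediate consequence of iterating the weighted Sobolev inequality \eqref{L_p_weighted} without providing any further details. In fact, your write-up is more careful than the paper's, since you explicitly verify that $\mathbb{E}$ preserves $C_0^\infty(\mathbb{G}\backslash\{0\})$ so that the iteration is legitimate.
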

\begin{rem} In the abelian case of the Euclidean space $\mathbb{G}=\mathbb{R}^{n}$, we have $Q=n$ and taking $|\cdot|$ to be the Euclidean norm, in the special case $\alpha=0$ and $k=1$, the unweighted Sobolev type inequality \eqref{Lp_Sob_highorder} was obtained in \cite[Theorem 1.1]{OS09}. In the case $k=2$ the inequality \eqref{Lp_Sob_highorder} can be thought of as a (weighted) Sobolev-Rellich type inequality.
\end{rem}

In the case $p=2$ an interesting feature is that we also have the exact formula for the remainder which provides the sharpness of the constants as well.

\begin{thm}\label{S-high} Let $\mathbb{G}$ be a homogeneous group of homogeneous dimension $Q$ and
let $|\cdot|$ be any homogeneous quasi-norm on $\mathbb{G}$. Let $\alpha\in\mathbb{R}$ and $k\in\mathbb N$ be such that $Q\neq2\alpha$.
Then for all complex-valued functions $f\in C^{\infty}_{0}(\mathbb{G}\backslash\{0\})$
the following inequality holds:
\begin{equation}\label{EQ:high-order1}
\left\|\frac{f}{|x|^{\alpha}}
\right\|_{L^{2}(\mathbb{G})}\leq
\left(\frac{2}{|Q-2\alpha|}\right)^{k}
\left\|\frac{1}{|x|^{\alpha}}\mathbb{E}^{k}f\right\|_{L^{2}(\mathbb{G})}.
\end{equation}
The constant in \eqref{EQ:high-order1} is sharp and the equality is attained if and only if $f=0$.

Furthermore, for all $k\in\mathbb N$ and $\alpha\in\mathbb{R}$, we have
\begin{multline}\label{equality-high-rem}
\left\|\frac{1}{|x|^{\alpha}}\mathbb{E}^{k}f\right\|^{2}_{L^{2}(\mathbb{G})}=
\left(\frac{Q-2\alpha}{2}\right)^{2k}\left\|\frac{f}{|x|^{\alpha}}
\right\|^{2}_{L^{2}(\mathbb{G})}\\
+\sum_{m=1}^{k}\left(\frac{Q-2\alpha}{2}\right)^{2k-2m}
\left\|\frac{1}{|x|^{\alpha}}\mathbb{E}^{m}f+ \frac{Q-2\alpha}{2|x|^{\alpha}}\mathbb{E}^{m-1}f\right\|^{2}_{L^{2}(\mathbb{G})}.
\end{multline}
\end{thm}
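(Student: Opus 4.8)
The plan is to prove the remainder identity \eqref{equality-high-rem} by induction on $k$, and then deduce the inequality \eqref{EQ:high-order1} together with its sharpness by dropping the nonnegative sum and arguing as in Remark \ref{LpHardy}. The base case $k=1$ is precisely the identity \eqref{awS} of Theorem \ref{aSobolev}, after reorganising: writing $\left\|\tfrac{1}{|x|^{\alpha}}\mathbb{E} f\right\|^{2}_{L^{2}(\mathbb{G})} = \left(\tfrac{Q-2\alpha}{2}\right)^{2}\left\|\tfrac{f}{|x|^{\alpha}}\right\|^{2}_{L^{2}(\mathbb{G})} + \left\|\tfrac{1}{|x|^{\alpha}}\mathbb{E} f + \tfrac{Q-2\alpha}{2|x|^{\alpha}} f\right\|^{2}_{L^{2}(\mathbb{G})}$, which is the $k=1$ instance of \eqref{equality-high-rem} since the sum then has only the $m=1$ term.

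For the inductive step, I would assume \eqref{equality-high-rem} holds for some $k$ and apply it with $f$ replaced by $\mathbb{E} f$ (note $\mathbb{E} f \in C_0^\infty(\mathbb{G}\backslash\{0\})$ whenever $f$ is, since $\mathbb{E}$ preserves this class away from the origin). This gives
\begin{equation*}
\left\|\frac{1}{|x|^{\alpha}}\mathbb{E}^{k+1}f\right\|^{2}_{L^{2}(\mathbb{G})}=
\left(\frac{Q-2\alpha}{2}\right)^{2k}\left\|\frac{\mathbb{E} f}{|x|^{\alpha}}
\right\|^{2}_{L^{2}(\mathbb{G})}
+\sum_{m=1}^{k}\left(\frac{Q-2\alpha}{2}\right)^{2k-2m}
\left\|\frac{1}{|x|^{\alpha}}\mathbb{E}^{m+1}f+ \frac{Q-2\alpha}{2|x|^{\alpha}}\mathbb{E}^{m}f\right\|^{2}_{L^{2}(\mathbb{G})}.
\end{equation*}
The sum on the right, after reindexing $m \mapsto m-1$, already produces the terms $m=2,\dots,k+1$ of the target sum for $k+1$ (with the correct powers $\left(\tfrac{Q-2\alpha}{2}\right)^{2(k+1)-2m}$). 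It remains to treat the leftover term $\left(\tfrac{Q-2\alpha}{2}\right)^{2k}\left\|\tfrac{\mathbb{E} f}{|x|^{\alpha}}\right\|^{2}_{L^{2}(\mathbb{G})}$: I apply the base identity \eqref{awS} once more to expand $\left\|\tfrac{1}{|x|^{\alpha}}\mathbb{E} f\right\|^{2}_{L^{2}(\mathbb{G})}$ as $\left(\tfrac{Q-2\alpha}{2}\right)^{2}\left\|\tfrac{f}{|x|^{\alpha}}\right\|^{2}_{L^{2}(\mathbb{G})} + \left\|\tfrac{1}{|x|^{\alpha}}\mathbb{E} f + \tfrac{Q-2\alpha}{2|x|^{\alpha}} f\right\|^{2}_{L^{2}(\mathbb{G})}$. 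Multiplying by $\left(\tfrac{Q-2\alpha}{2}\right)^{2k}$ yields exactly the $k=0$ coefficient $\left(\tfrac{Q-2\alpha}{2}\right)^{2(k+1)}\left\|\tfrac{f}{|x|^{\alpha}}\right\|^{2}_{L^{2}(\mathbb{G})}$ plus the missing $m=1$ term of the sum for $k+1$. Assembling all pieces gives \eqref{equality-high-rem} at level $k+1$, completing the induction.

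Once the identity is established, the inequality \eqref{EQ:high-order1} follows immediately by discarding the nonnegative sum $\sum_{m=1}^{k}(\cdots)$ in \eqref{equality-high-rem} and taking square roots (using $Q\neq 2\alpha$). For sharpness and the equality case: equality in \eqref{EQ:high-order1} forces every summand to vanish, in particular the $m=k$ term $\tfrac{1}{|x|^{\alpha}}\mathbb{E}^{k}f + \tfrac{Q-2\alpha}{2|x|^{\alpha}}\mathbb{E}^{k-1}f = 0$, i.e. $\mathbb{E}(\mathbb{E}^{k-1}f) = -\tfrac{Q-2\alpha}{2}\mathbb{E}^{k-1}f$; by Lemma \ref{L:Euler} this makes $\mathbb{E}^{k-1}f$ positively homogeneous of order $-\tfrac{Q-2\alpha}{2}$, and iterating down through the vanishing of the $m=k-1,\dots,1$ terms shows $f$ itself is a nonzero homogeneous function of a fixed order, contradicting compact support in $\mathbb{G}\backslash\{0\}$ unless $f=0$; conversely $f=0$ gives equality. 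The sharpness of the constant $\left(\tfrac{2}{|Q-2\alpha|}\right)^{k}$ can be seen by testing on functions of the form $|x|^{-\tfrac{Q-2\alpha}{2}+\varepsilon}\,\phi(|x|)$ with suitable cutoffs, for which the remainder terms become negligible as $\varepsilon\to 0$, exactly as in the proof of sharpness for Corollary \ref{waSobolev}. The only mild obstacle is the bookkeeping of indices and powers of $\tfrac{Q-2\alpha}{2}$ in the inductive step; once the reindexing is set up correctly the two extra applications of \eqref{awS} slot in cleanly.
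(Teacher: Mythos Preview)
Your proposal is correct and follows essentially the same route as the paper: both arguments prove the identity \eqref{equality-high-rem} by iterating the base case \eqref{awS} (the paper writes out the step from $k=1$ to $k=2$ and then says ``this iteration process implies'' the general formula, while you phrase the same iteration as a formal induction), then drop the nonnegative sum to obtain \eqref{EQ:high-order1}, and finally use Lemma \ref{L:Euler} together with the incompatibility of nontrivial homogeneity with compact support for the equality case, and approximation of the homogeneous profile for sharpness. Your bookkeeping of the reindexing and your use of \eqref{awS} on the leftover term $\left(\tfrac{Q-2\alpha}{2}\right)^{2k}\bigl\|\tfrac{1}{|x|^{\alpha}}\mathbb{E} f\bigr\|_{L^{2}}^{2}$ are exactly what the paper's iteration does implicitly.
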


Although we often do not get sharp constants by iterative methods, since we have the formula \eqref{equality-high-rem}, we can apply it to prove that the iterative constant is sharp.
This may be a general feature of iterating Sobolev-Rellich type inequalities as the same phenomena was also investigated in $\mathbb{R}^{n}$ by Davies and Hinz \cite{Davies-Hinz}, but they have used very different methods for their analysis (see also \cite{Brez1} and \cite{Brez2}).

\begin{proof}[Proof of Theorem \ref{S-high}]
Let us iterate \eqref{awS}. For any $\alpha\in\mathbb{R}$ we start with
\begin{equation}\label{awS0}
\left\|\frac{1}{|x|^{\alpha}}\mathbb{E} f\right\|^{2}_{L^{2}(\mathbb{G})}=
\left(\frac{Q}{2}-\alpha\right)^{2}
\left\|\frac{f}{|x|^{\alpha}}\right\|^{2}_{L^{2}(\mathbb{G})}
+\left\|\frac{1}{|x|^{\alpha}}\mathbb{E} f+\frac{Q-2\alpha}{2|x|^{\alpha}}f
\right\|^{2}_{L^{2}(\mathbb{G})}.
\end{equation}
Putting $\mathbb{E} f$ instead of $f$ in \eqref{awS0} we obtain
\begin{equation}\label{awS0E}
\left\|\frac{1}{|x|^{\alpha}}\mathbb{E}^{2} f\right\|^{2}_{L^{2}(\mathbb{G})}=
\left(\frac{Q}{2}-\alpha\right)^{2}
\left\|\frac{\mathbb{E}f}{|x|^{\alpha}}\right\|^{2}_{L^{2}(\mathbb{G})}
+\left\|\frac{1}{|x|^{\alpha}}\mathbb{E}^{2} f+\frac{Q-2\alpha}{2|x|^{\alpha}}\mathbb{E}f
\right\|^{2}_{L^{2}(\mathbb{G})}.
\end{equation}
Combining \eqref{awS0E} with \eqref{awS0} we get
\begin{multline*}
\left\|\frac{1}{|x|^{\alpha}}\mathbb{E}^{2} f\right\|^{2}_{L^{2}(\mathbb{G})}=\left(\frac{Q}{2}-\alpha\right)^{4}
\left\|\frac{f}{|x|^{\alpha}}\right\|^{2}_{L^{2}(\mathbb{G})}\\+\left(\frac{Q}{2}-\alpha\right)^{2}
\left\|\frac{1}{|x|^{\alpha}}\mathbb{E} f+\frac{Q-2\alpha}{2|x|^{\alpha}}f
\right\|^{2}_{L^{2}(\mathbb{G})}\\
+\left\|\frac{1}{|x|^{\alpha}}\mathbb{E}^{2} f+\frac{Q-2\alpha}{2|x|^{\alpha}}\mathbb{E}f
\right\|^{2}_{L^{2}(\mathbb{G})}.
\end{multline*}
This iteration process implies
$$\left\|\frac{1}{|x|^{\alpha}}\mathbb{E}^{k}f\right\|^{2}_{L^{2}(\mathbb{G})}=
\left(\frac{Q-2\alpha}{2}\right)^{2k}\left\|\frac{f}{|x|^{\alpha}}
\right\|^{2}_{L^{2}(\mathbb{G})}$$
$$+\sum_{m=1}^{k}\left(\frac{Q-2\alpha}{2}\right)^{2k-2m}
\left\|\frac{1}{|x|^{\alpha}}\mathbb{E}^{m}f+ \frac{Q-2\alpha}{2|x|^{\alpha}}\mathbb{E}^{m-1}f\right\|^{2}_{L^{2}(\mathbb{G})}, \quad k=1,2,\ldots.$$
By dropping nonnegative terms, we obtain
\begin{equation}\label{awSoboleveq-high}
\left\|\frac{1}{|x|^{\alpha}}\mathbb{E}^{k}f\right\|^{2}_{L^{2}(\mathbb{G})}\geq
\left(\frac{Q-2\alpha}{2}\right)^{2k}\left\|\frac{f}{|x|^{\alpha}}
\right\|^{2}_{L^{2}(\mathbb{G})}.
\end{equation}
If $Q\neq2\alpha$, this means
\begin{equation}\label{awSoboleveq-high}
\left\|\frac{f}{|x|^{\alpha}}
\right\|^{2}_{L^{2}(\mathbb{G})} \leq\left(\frac{2}{Q-2\alpha}\right)^{2k}\left\|\frac{1}{|x|^{\alpha}}\mathbb{E}^{k}f\right\|^{2}_{L^{2}(\mathbb{G})}
,
\end{equation}
which implies \eqref{EQ:high-order1}.
Now let us show the sharpness of the constant in \eqref{EQ:high-order1}.
The equality $$\frac{1}{|x|^{\alpha}}\mathbb{E}^{m}f+ \frac{Q-2\alpha}{2|x|^{\alpha}}\mathbb{E}^{m-1}f=0$$
can be restated as $$\mathbb{E} (\mathbb{E}^{m-1}f)+
\frac{Q-2\alpha}{2}\mathbb{E}^{m-1}f=0,$$ and
by Lemma \ref{L:Euler} it follows that
$\mathbb{E}^{m-1}f$ is positively homogeneous of degree $-\frac{Q}{2}+\alpha$.
Thus, if $f$ is positively homogeneous of degree
$m-1-\frac{Q}{2}+\alpha$, then all the remainder terms vanish. Since this can be approximated by functions in
$C^{\infty}_{0}(\mathbb{G}\backslash\{0\})$, the constant $\left(\frac{2}{Q-2\alpha}\right)^{2k}$ is sharp.
Even if it were attained, then it would be on functions $f$ which are positively homogeneous of
degree $m-1-\frac{Q}{2}+\alpha$. In this case $\frac{f}{|x|^{\alpha+m-1}}$ would be positively homogeneous of
degree $-\frac{Q}{2}$, and these are in $L^{2}$ if and only if they are zero.
\end{proof}

\section{Euler-Hilbert-Sobolev space on homogeneous groups}
\label{SEC:EHS}
In this section we introduce an Euler-Hilbert-Sobolev space on homogeneous groups.
First let us define the Euler-Sobolev function space by
\begin{equation}\label{ESdef}\mathfrak{L}^{k,p}(\mathbb{G})\equiv \overline{C^{\infty}_{0}(\mathbb{G}\backslash\{0\})}^{\|\cdot\|_{\mathfrak{L}^{k,p}(\mathbb{G})}},\;k\in\mathbb{Z},
\end{equation}
where
$$\|f\|_{\mathfrak{L}^{k,p}(\mathbb{G})}:=\|\mathbb{E}^{k} f\|_{L^{p}(\mathbb{G})}.$$
By \eqref{ESdef}, it is easy to see that the higher order Sobolev-Rellich inequality \eqref{Lp_Sob_highorder} with $\alpha=0$ holds for all functions $f\in \mathfrak{L}^{k,p}(\mathbb{G})$:
\begin{equation}\label{ES1}
\|f\|_{L^{p}(\mathbb{G})}\leq\left(\frac{p}{Q}\right)^{k}\|\mathbb{E}^{k}f\|_{L^{p}(\mathbb{G})}, \;1<p<\infty, \;k\in\mathbb{N}.
\end{equation}
By taking into account the definition of the Euler-Sobolev function space \eqref{ESdef} and higher order Sobolev-Rellich inequality \eqref{ES1}, we obtain the following Proposition \ref{Banach1}:
\begin{prop} \label{Banach1} Let $\mathbb{G}$ be a homogeneous group of homogeneous dimension $Q$ and let $1<p<\infty$. Then the semi-normed space $(\mathfrak{L}^{k,p}(\mathbb{G}),\|\cdot\|_{\mathfrak{L}^{k,p}(\mathbb{G})})$, $k\in\mathbb{Z}$ is a complete space. The norm of the embedding operator $\iota : (\mathfrak{L}^{k,p}(\mathbb{G}), \|\cdot\|_{\mathfrak{L}^{k,p}(\mathbb{G})})\hookrightarrow (L^{p}(\mathbb{G}), \|\cdot\|_{L^{p}(\mathbb{G})})$ satisfies
\begin{equation} \label{emb}
\|\iota\|_{\mathfrak{L}^{k,p}(\mathbb{G})\rightarrow L^{p}(\mathbb{G})}\leq\left(\frac{p}{Q}\right)^k,\;k\in \mathbb{N},
\end{equation}
where we understand the embedding $\iota$ as an embedding of semi-normed subspace of $L^{p}(\mathbb{G})$.
\end{prop}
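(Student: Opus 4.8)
\emph{Proof proposal.} The plan is to derive everything from the higher order Sobolev--Rellich inequality \eqref{ES1}, which states precisely that, on the dense subspace $C_0^\infty(\mathbb{G}\backslash\{0\})$, the $L^p$-norm is controlled by $(p/Q)^k$ times the semi-norm $\|\cdot\|_{\mathfrak{L}^{k,p}(\mathbb{G})}=\|\mathbb{E}^k\cdot\|_{L^p(\mathbb{G})}$. First, for the embedding bound \eqref{emb}: by the definition \eqref{ESdef} the inclusion $C_0^\infty(\mathbb{G}\backslash\{0\})\hookrightarrow L^p(\mathbb{G})$ is uniformly continuous with respect to $\|\cdot\|_{\mathfrak{L}^{k,p}(\mathbb{G})}$ and hence extends uniquely to a bounded operator $\iota:\mathfrak{L}^{k,p}(\mathbb{G})\to L^p(\mathbb{G})$, whose norm cannot exceed the constant $(p/Q)^k$ already valid on the dense subspace; this yields \eqref{emb}. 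One should also record here that $\iota$ is genuinely well defined on the completion of a merely \emph{semi}-normed space, i.e. that $\|f\|_{\mathfrak{L}^{k,p}(\mathbb{G})}=0$ forces $f=0$ in $L^p(\mathbb{G})$ --- again an immediate consequence of \eqref{ES1}.

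For completeness, the quickest route is to observe that a completion is, by construction, complete, so the assertion is immediate from \eqref{ESdef}; what is worth spelling out is the concrete realization of $\mathfrak{L}^{k,p}(\mathbb{G})$ inside $L^p(\mathbb{G})$. Given a sequence $(f_j)\subset C_0^\infty(\mathbb{G}\backslash\{0\})$ Cauchy for $\|\cdot\|_{\mathfrak{L}^{k,p}(\mathbb{G})}$, the sequence $(\mathbb{E}^k f_j)$ is $L^p$-Cauchy, say $\mathbb{E}^k f_j\to g$ in $L^p(\mathbb{G})$, and by \eqref{ES1} applied to $f_j-f_l$ also $f_j\to f$ in $L^p(\mathbb{G})$ for some $f$. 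To see $g=\mathbb{E}^k f$ distributionally, pair with $\varphi\in C_0^\infty(\mathbb{G}\backslash\{0\})$ and use $\int_{\mathbb{G}}(\mathbb{E}^k f_j)\overline{\varphi}\,dx=\int_{\mathbb{G}}f_j\,\overline{(\mathbb{E}^{*})^k\varphi}\,dx$; by Lemma \ref{L2:E} the formal adjoint $\mathbb{E}^{*}=-Q\mathbb{I}-\mathbb{E}$ is a first order operator preserving $C_0^\infty(\mathbb{G}\backslash\{0\})$, so letting $j\to\infty$ gives $\int_{\mathbb{G}}g\,\overline{\varphi}\,dx=\int_{\mathbb{G}}f\,\overline{(\mathbb{E}^{*})^k\varphi}\,dx$, i.e. $\mathbb{E}^k f=g$. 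Hence $f$ is independent of the approximating sequence, $f\in\mathfrak{L}^{k,p}(\mathbb{G})$, and $\mathfrak{L}^{k,p}(\mathbb{G})$ is realized as a subspace of $L^p(\mathbb{G})$ carrying the norm $\|\mathbb{E}^k\cdot\|_{L^p(\mathbb{G})}$, complete because it is a completion. The case $k=0$ is $L^p(\mathbb{G})$ itself, and for $k\in\mathbb{Z}$ with $k<0$ one interprets $\mathbb{E}^k$ through the same duality, completeness again being that of the abstract completion.

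The step that requires genuine care --- the only real obstacle --- is the distributional closedness invoked above: that $f_j\to f$ and $\mathbb{E}^k f_j\to g$ in $L^p(\mathbb{G})$ force $g=\mathbb{E}^k f$, equivalently that $\iota$ is injective so that the abstract completion really is a function space inside $L^p(\mathbb{G})$. The explicit adjoint formula \eqref{L2:E1} reduces this to routine integration by parts against test functions supported away from the origin, with no boundary terms at $x=0$ since the test functions vanish there; everything else is the standard continuous-extension and completion machinery.
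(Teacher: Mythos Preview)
Your proposal is correct and follows the same approach the paper intends: the paper gives no separate proof of Proposition \ref{Banach1} at all, merely stating (just before the proposition) that it follows from the definition \eqref{ESdef} of $\mathfrak{L}^{k,p}(\mathbb{G})$ as a completion together with the higher order inequality \eqref{ES1}. Your write-up is a fleshed-out version of exactly this reasoning --- completeness from the abstract completion, the embedding bound from \eqref{ES1} on the dense core --- with the additional (and worthwhile) care of verifying that the abstract completion is concretely realized inside $L^p(\mathbb{G})$ via the distributional closedness argument using $\mathbb{E}^{*}=-Q\mathbb{I}-\mathbb{E}$ from Lemma \ref{L2:E}; the paper leaves this implicit.
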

By using Lemma \ref{L2:Komatsu} we can define fractional powers of the operator $\mathbb{A}=\mathbb{E}\mathbb{E}^{*}$ as in \cite[Chapter 5]{Martinez-Sanz:fractional} and we denote
$$|\mathbb{E}|^{\beta}:=\mathbb{A}^{\frac{\beta}{2}}, \;\;\;\;\;\beta\in \mathbb{C}.$$
For a brief account of the relevant theory of fractional powers we refer to \cite[App.A]{FR}.
\begin{thm} \label{fracineq}
Let $\mathbb{G}$ be a homogeneous group
of homogeneous dimension $Q$, $\beta\in \mathbb{C_{+}}$ and let $k>\frac{{\rm Re} \beta}{2}$ be a positive integer. Then for all complex-valued functions $f\in C_{0}^{\infty}(\mathbb{G}\backslash\{0\})$ we have
\begin{equation}\label{fracineq1}
\left\|f\right\|_{L^{2}(\mathbb{G})}\leq C(k-\frac{\beta}{2},k)\left(\frac{2}{Q}\right)^{{\rm Re}\beta}\left\|\mathbb{|E|}^{\beta} f\right\|_{L^{2}(\mathbb{G})},
\end{equation}
where \begin{equation}\label{fracineq2}C(\beta,k)=\frac{\Gamma(k+1)}{|\Gamma(\beta)\Gamma(k-\beta)|}\frac{2^{k-{\rm Re}\beta}}{{\rm Re}\beta (k-{\rm Re}\beta)}.
\end{equation}
\end{thm}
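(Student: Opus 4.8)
The plan is to realise $f$ as $\mathbb{A}^{-\beta/2}$ applied to $g:=|\mathbb{E}|^{\beta}f=\mathbb{A}^{\beta/2}f$, to represent $\mathbb{A}^{-\beta/2}$ by the Balakrishnan--Komatsu formula against the $k$-th power of the resolvent, and then to estimate the resolvent in two different ways: by the Komatsu resolvent bound of Lemma~\ref{L2:Komatsu} for large values of the integration parameter, and by the iterated $L^{2}$ Sobolev--Rellich inequality for small values. First I would check that $\mathbb{A}=\mathbb{E}\mathbb{E}^{*}$ is injective on $L^{2}(\mathbb{G})$: if $\mathbb{A}h=0$, then $\|\mathbb{E}^{*}h\|_{L^{2}(\mathbb{G})}^{2}=(\mathbb{A}h,h)_{L^{2}(\mathbb{G})}=0$, so $\mathbb{E}h=-Qh$ by \eqref{L2:E1}, hence $h$ is positively homogeneous of degree $-Q$ by Lemma~\ref{L:Euler}, and the polar decomposition \eqref{EQ:polar} shows such an $h$ lies in $L^{2}(\mathbb{G})$ only if $h=0$. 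Thus $\mathbb{A}^{-\beta/2}$ makes sense on the range of $\mathbb{A}^{\beta/2}$, and $f=\mathbb{A}^{-\beta/2}g$ for every $f\in C_{0}^{\infty}(\mathbb{G}\backslash\{0\})$.

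Next I would record the integer-order estimate: iterating Part~(iii) of Theorem~\ref{RemSobolev} gives $\|h\|_{L^{2}(\mathbb{G})}\le(2/Q)^{2k}\|\mathbb{E}^{2k}h\|_{L^{2}(\mathbb{G})}$, and since by \eqref{L2:E1} the operators $\mathbb{E}$ and $\mathbb{E}^{*}$ commute and $\mathbb{A}=\mathbb{E}\mathbb{E}^{*}=\mathbb{E}^{*}\mathbb{E}$ is formally self-adjoint, one has $\|\mathbb{E}^{2k}h\|_{L^{2}(\mathbb{G})}^{2}=(\mathbb{A}^{2k}h,h)_{L^{2}(\mathbb{G})}=\|\mathbb{A}^{k}h\|_{L^{2}(\mathbb{G})}^{2}$, so that $\|h\|_{L^{2}(\mathbb{G})}\le(2/Q)^{2k}\|\mathbb{A}^{k}h\|_{L^{2}(\mathbb{G})}$ (this extends from $C_{0}^{\infty}(\mathbb{G}\backslash\{0\})$ to $\mathrm{Dom}(\mathbb{A}^{k})$ by density, cf.\ \eqref{ES1}). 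Then, since $0<{\rm Re}(\beta/2)<k$, the fractional power calculus for Komatsu-non-negative operators (see \cite[Ch.~5]{Martinez-Sanz:fractional}) gives
$$\mathbb{A}^{-\beta/2}g=\frac{\Gamma(k)}{\Gamma(\beta/2)\Gamma(k-\beta/2)}\int_{0}^{\infty}t^{k-\beta/2-1}(t\mathbb{I}+\mathbb{A})^{-k}g\,dt,$$
the integral being absolutely convergent by Lemma~\ref{L2:Komatsu}. I would bound $\|(t\mathbb{I}+\mathbb{A})^{-k}g\|_{L^{2}(\mathbb{G})}$ by $t^{-k}\|g\|_{L^{2}(\mathbb{G})}$ using $\|(t\mathbb{I}+\mathbb{A})^{-1}\|\le t^{-1}$, and alternatively by $2^{k}(2/Q)^{2k}\|g\|_{L^{2}(\mathbb{G})}$ by applying the Sobolev--Rellich inequality above to $h=(t\mathbb{I}+\mathbb{A})^{-k}g$, together with $\mathbb{A}^{k}(t\mathbb{I}+\mathbb{A})^{-k}=(\mathbb{I}-t(t\mathbb{I}+\mathbb{A})^{-1})^{k}$ and $\|\mathbb{I}-t(t\mathbb{I}+\mathbb{A})^{-1}\|\le 1+t\cdot t^{-1}=2$. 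This yields
$$\|f\|_{L^{2}(\mathbb{G})}\le\frac{\Gamma(k)}{|\Gamma(\beta/2)\Gamma(k-\beta/2)|}\|g\|_{L^{2}(\mathbb{G})}\int_{0}^{\infty}t^{k-{\rm Re}(\beta/2)-1}\min\{t^{-k},\,2^{k}(2/Q)^{2k}\}\,dt.$$

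Finally I would split the remaining integral at the crossover point $t_{0}=Q^{2}/8$, where $t_{0}^{-k}=2^{k}(2/Q)^{2k}$: writing $r={\rm Re}(\beta/2)$, the part over $(0,t_{0}]$ equals $2^{k}(2/Q)^{2k}t_{0}^{\,k-r}/(k-r)$ and the part over $[t_{0},\infty)$ equals $t_{0}^{-r}/r$; using $2^{k}(2/Q)^{2k}t_{0}^{\,k}=1$ and $t_{0}^{-r}=2^{r}(2/Q)^{2r}$ one gets that the integral equals $2^{r}(2/Q)^{2r}k/(r(k-r))$. Inserting this and simplifying by means of $\Gamma(k+1)=k\Gamma(k)$, ${\rm Re}\,\beta=2r$, and the definition \eqref{fracineq2} with $\gamma=k-\beta/2$ (so that $\Gamma(k-\gamma)=\Gamma(\beta/2)$, ${\rm Re}\,\gamma=k-r$, $k-{\rm Re}\,\gamma=r$), I expect to obtain precisely $\|f\|_{L^{2}(\mathbb{G})}\le C(k-\tfrac{\beta}{2},k)(2/Q)^{{\rm Re}\,\beta}\|g\|_{L^{2}(\mathbb{G})}$, which is \eqref{fracineq1}. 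The arithmetic here is routine; the hard part will be the functional-analytic bookkeeping, namely verifying that $f\in C_{0}^{\infty}(\mathbb{G}\backslash\{0\})$ belongs to the class on which the Balakrishnan--Komatsu representation of $\mathbb{A}^{-\beta/2}$ is valid (this is where injectivity of $\mathbb{A}$ and $f\in\bigcap_{n}\mathrm{Dom}(\mathbb{A}^{n})$ enter) and justifying that the iterated Sobolev--Rellich inequality, established on $C_{0}^{\infty}(\mathbb{G}\backslash\{0\})$, applies to $(t\mathbb{I}+\mathbb{A})^{-k}g\in\mathrm{Dom}(\mathbb{A}^{k})$ by density.
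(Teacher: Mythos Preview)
Your proposal is correct and arrives at precisely the stated constant. The paper's own proof is shorter because it invokes \cite[Proposition~7.2.1]{Martinez-Sanz:fractional} as a black box, obtaining the moment inequality
\[
\bigl\||\mathbb{E}|^{-\beta}f\bigr\|_{L^{2}(\mathbb{G})}\le C\!\left(k-\tfrac{\beta}{2},k\right)\|f\|_{L^{2}(\mathbb{G})}^{1-\frac{{\rm Re}\beta}{2k}}\|\mathbb{A}^{-k}f\|_{L^{2}(\mathbb{G})}^{\frac{{\rm Re}\beta}{2k}},
\]
and then simply inserting the iterated Sobolev--Rellich bound $\|\mathbb{A}^{-k}f\|_{L^{2}}\le(2/Q)^{2k}\|f\|_{L^{2}}$ from Corollary~\ref{AE} and \eqref{EQ:high-order1}. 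What you wrote is exactly the standard derivation of that moment inequality: the Balakrishnan representation of $\mathbb{A}^{-\beta/2}$ against the $k$-th resolvent power, split at the crossover point between the Komatsu resolvent bound and the Sobolev--Rellich bound. So the two arguments are the same in substance; yours is more self-contained, the paper's is more economical. One small observation: since $\mathbb{A}=\mathbb{E}\mathbb{E}^{*}$ is in fact self-adjoint and non-negative on $L^{2}(\mathbb{G})$, the spectral calculus gives $\|\mathbb{A}(t\mathbb{I}+\mathbb{A})^{-1}\|\le 1$ rather than $2$, so your factor $2^{k}$ could be replaced by $1$, yielding a strictly better constant; the paper's constant, inherited from the general (not necessarily self-adjoint) Komatsu framework of \cite{Martinez-Sanz:fractional}, carries the same $2^{r}$ factor, which is why your cruder bound reproduces \eqref{fracineq1} on the nose.
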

\begin{proof}[Proof of Theorem \ref{fracineq}]
By using \cite[Proposition 7.2.1, p.176]{Martinez-Sanz:fractional} we obtain
\begin{equation}\label{fracineq01}
\left\|\mathbb{|E|}^{-\beta} f\right\|_{L^{2}(\mathbb{G})}\leq C(k-\frac{\beta}{2},k)\left\|f\right\|^{1-\frac{{\rm Re} \beta}{2k}}_{L^{2}(\mathbb{G})}\left\|\mathbb{A}^{-k}f\right\|^{\frac{{\rm Re} \beta}{2k}}_{L^{2}(\mathbb{G})}.
\end{equation}
By Corollary \ref{AE} and \eqref{EQ:high-order1} with $\alpha=0$ it follows that
\begin{multline*}C(k-\frac{\beta}{2},k)\left\|f\right\|^{1-\frac{{\rm Re} \beta}{2k}}_{L^{2}(\mathbb{G})}\left\|\mathbb{A}^{-k}f\right\|^{\frac{{\rm Re} \beta}{2k}}_{L^{2}(\mathbb{G})}\\ \leq C(k-\frac{\beta}{2},k)\left\|f\right\|^{1-\frac{{\rm Re} \beta}{2k}}_{L^{2}(\mathbb{G})}\left(\frac{4}{Q^{2}}\right)^{\frac{{\rm Re} \beta }{2}}\left\|f\right\|^{\frac{{\rm Re} \beta}{2k}}_{L^{2}(\mathbb{G})}\\
=C(k-\frac{\beta}{2},k)\left(\frac{4}{Q^{2}}\right)^{\frac{{\rm Re} \beta }{2}}\left\|f\right\|_{L^{2}(\mathbb{G})},
\end{multline*}
which combined with \eqref{fracineq01} implies \eqref{fracineq1}.
\end{proof}
Now we define the Euler-Hilbert-Sobolev function space by
\begin{equation}\label{EHSdef}
\mathbb{H}^{\beta}(\mathbb{G})\equiv\overline{C^{\infty}_{0}(\mathbb{G}\backslash\{0\})}^{\|\cdot\|_{\mathbb{H}^{\beta}(\mathbb{G})}},
\end{equation}
where
$$\|f\|_{\mathbb{H}^{\beta}(\mathbb{G})}:=\|\mathbb{|E|}^{\beta} f\|_{L^{2}(\mathbb{G})}.$$
By \eqref{EHSdef} we obtain the inequality \eqref{fracineq1} for all $f\in \mathbb{H}^{\beta}(\mathbb{G})$:
\begin{equation}\label{EHS1}
\left\|f\right\|_{L^{2}(\mathbb{G})}\leq C(k-\frac{\beta}{2},k)\left(\frac{2}{Q}\right)^{{\rm Re}\beta}\left\|\mathbb{|E|}^{\beta} f\right\|_{L^{2}(\mathbb{G})},
\end{equation}
where $\beta\in \mathbb{C_{+}},\; k>\frac{{\rm Re} \beta}{2}, \;k\in \mathbb{N}$ and $C(k-\frac{\beta}{2},k)$ is given by \eqref{fracineq2}.

By taking into account the definition of the Euler-Hilbert-Sobolev function space \eqref{EHSdef} and inequality \eqref{fracineq1}, we obtain the following Proposition \ref{Banach2}:
\begin{prop} \label{Banach2} The semi-normed space $(\mathbb{H}^{\beta},\|\cdot\|_{\mathbb{H}^{\beta}})$, $\beta\in\mathbb{C}$ is a complete space. Moreover, the norm of the embedding operator $\iota : (\mathbb{H}^{\beta}, \|\cdot\|_{\mathbb{H}^{\beta}})\hookrightarrow (L^{2}, \|\cdot\|_{L^{2}})$ satisfies
\begin{equation} \label{emb1}
\|\iota\|_{\mathbb{H}^{\beta}(\mathbb{G})\rightarrow L^{2}(\mathbb{G})}\leq C\left(k-\frac{\beta}{2},k\right)\left(\frac{2}{Q}\right)^{{\rm Re} \beta }, \quad \beta\in \mathbb{C_{+}}, \;k>\frac{{\rm Re} \beta}{2}, \;k\in \mathbb{N},
\end{equation}
where we understand the embedding $\iota$ as an embedding of semi-normed subspace of $L^{2}(\mathbb{G})$.
\end{prop}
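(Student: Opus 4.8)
The plan is to treat the two assertions of Proposition~\ref{Banach2} in turn, both reducing to material already established. For the operator-norm bound \eqref{emb1} I would appeal directly to inequality \eqref{EHS1}: by the definition \eqref{EHSdef} it already holds for every $f\in\mathbb{H}^{\beta}(\mathbb{G})$, not merely for $f\in C_{0}^{\infty}(\mathbb{G}\backslash\{0\})$. Since the embedding $\iota$ acts as the identity, so that $\iota f=f$ as an element of $L^{2}(\mathbb{G})$, one has
$$\|\iota f\|_{L^{2}(\mathbb{G})}=\|f\|_{L^{2}(\mathbb{G})}\leq C\left(k-\frac{\beta}{2},k\right)\left(\frac{2}{Q}\right)^{{\rm Re}\,\beta}\|f\|_{\mathbb{H}^{\beta}(\mathbb{G})},$$
which is precisely \eqref{emb1} after taking the supremum over $f$ with $\|f\|_{\mathbb{H}^{\beta}(\mathbb{G})}=1$; the estimate is valid for each admissible integer $k>{\rm Re}\,\beta/2$.

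For completeness I would first observe that by \eqref{EHSdef} the space $\mathbb{H}^{\beta}(\mathbb{G})$ is defined as a completion, hence is complete by construction for every $\beta\in\mathbb{C}$. The substance of the statement, for $\beta\in\mathbb{C_{+}}$, is rather that this abstract completion is realised concretely as a linear subspace of $L^{2}(\mathbb{G})$ on which $\iota$ is the natural inclusion. To see this I would take a sequence $\{f_{j}\}\subset C_{0}^{\infty}(\mathbb{G}\backslash\{0\})$ which is Cauchy for $\|\cdot\|_{\mathbb{H}^{\beta}(\mathbb{G})}$ and invoke \eqref{fracineq1} (equivalently \eqref{EHS1}) to obtain $\|f_{i}-f_{j}\|_{L^{2}(\mathbb{G})}\leq C(k-\frac{\beta}{2},k)(2/Q)^{{\rm Re}\,\beta}\|f_{i}-f_{j}\|_{\mathbb{H}^{\beta}(\mathbb{G})}\to 0$, so that $\{f_{j}\}$ converges in $L^{2}(\mathbb{G})$ to some $f$; the same inequality shows that $\mathbb{H}^{\beta}$-equivalent Cauchy sequences have the same $L^{2}$-limit, so the inclusion $\iota$ is a well-defined continuous linear injection of $\mathbb{H}^{\beta}(\mathbb{G})$ into $L^{2}(\mathbb{G})$, its image carrying the transported (generally stronger) norm. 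This runs exactly parallel to the proof of Proposition~\ref{Banach1}, with \eqref{fracineq1} now playing the role that the higher-order Sobolev--Rellich inequality \eqref{ES1} plays there.

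I do not expect a genuine obstacle: the proposition is in essence a corollary of Theorem~\ref{fracineq}. The one point deserving care is the identification of the formal completion with an honest subspace of $L^{2}(\mathbb{G})$, so that the symbol $\iota$ really denotes an embedding, and this is exactly what the a priori bound of Theorem~\ref{fracineq} provides. A secondary bookkeeping remark is that $|\mathbb{E}|^{\beta}=\mathbb{A}^{\beta/2}$, and hence $\|\cdot\|_{\mathbb{H}^{\beta}(\mathbb{G})}$, is guaranteed to separate points only for ${\rm Re}\,\beta>0$ (again via \eqref{fracineq1}); for general $\beta\in\mathbb{C}$ one reads \eqref{EHSdef} as the completion of a seminormed space, which is still complete, while the embedding claim is stated only for $\beta\in\mathbb{C_{+}}$.
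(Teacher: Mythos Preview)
Your proposal is correct and follows the same approach as the paper: the paper simply states that Proposition~\ref{Banach2} follows by combining the definition \eqref{EHSdef} with inequality \eqref{fracineq1}, exactly as you argue. If anything, you supply more detail than the paper does, particularly in spelling out why the abstract completion embeds into $L^{2}(\mathbb{G})$ and in noting the parallel with Proposition~\ref{Banach1}.
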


\section{Poincar\'{e} type inequality on $\mathbb{G}$}
\label{SEC:Poincare}
In this section we establish Poincar\'{e} type inequality on the homogeneous group
$\mathbb{G}$. Before stating our results, we introduce some notation. Let $\Omega \subset \mathbb{G}$ be an open set and let $\widehat{\mathfrak{L}}_{0}^{1,p}(\Omega)$ be the completion of $C^{\infty}_{0}(\Omega\backslash\{0\})$ with respect to
$$\|f\|_{\widehat{\mathfrak{L}}^{1,p}(\Omega)}=\|f\|_{L^{p}(\Omega)}+\|\mathbb{E}f\|_{L^{p}(\Omega)}, \;\;1<p<\infty.$$

\begin{thm}\label{Po}
Let $\Omega$ be a bounded open subset of $\;\mathbb{G}$. If $\;1<p<\infty, \;f\in \widehat{\mathfrak{L}}_{0}^{1,p}(\Omega)$ and $\mathcal{R}f\equiv\frac{1}{|x|}\mathbb{E}f\in L^{p}(\Omega)$, then we have the following Poincar\'{e} type inequality on $\Omega \subset \mathbb{G}$:
\begin{equation}\label{Po01}
\|f\|_{L^{p}(\Omega)}\leq  \frac{R p}{Q}\|\mathcal{R}f\|_{L^{p}(\Omega)}=\frac{R p}{Q}\left\|\frac{1}{|x|}\mathbb{E}f\right\|_{L^{p}(\Omega)},
\end{equation}
where $R=\underset{x\in \Omega}{\rm sup}|x|$.
\end{thm}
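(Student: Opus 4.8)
The plan is to mimic the proof of the $L^p$-Sobolev inequality \eqref{LpSobolevinC} in Theorem \ref{RemSobolev}, but keeping track of the weight $|x|$ that now appears because we estimate against $\mathcal{R}f = \frac{1}{|x|}\mathbb{E}f$ rather than against $\mathbb{E}f$ itself. By density it suffices to prove \eqref{Po01} for $f\in C_0^\infty(\Omega\setminus\{0\})$, since both sides are continuous in the norm $\|\cdot\|_{\widehat{\mathfrak{L}}^{1,p}(\Omega)}$ and the hypothesis $\mathcal{R}f\in L^p(\Omega)$ guarantees the right-hand side is finite in the limit; one extends $f$ by zero to all of $\mathbb{G}$ so the polar-coordinate machinery of Section \ref{SEC:prelim} applies.

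First I would reproduce the integration-by-parts identity \eqref{EQ:formula1} on $\mathbb{G}$: writing $x=ry$ with $y\in\wp$ and using \eqref{EQ:polar} together with \eqref{dfdr},
\begin{equation*}
\int_{\Omega}|f(x)|^{p}\,dx
=\int_{0}^{\infty}\!\!\int_{\wp}|f(ry)|^{p}r^{Q-1}\,d\sigma(y)\,dr
=-\frac{p}{Q}\,{\rm Re}\int_{\mathbb{G}}|f(x)|^{p-2}f(x)\,\overline{\mathbb{E}f(x)}\,dx.
\end{equation*}
Now the key new step: write $\mathbb{E}f = |x|\,\mathcal{R}f$ and estimate $|x|\le R$ on $\Omega$, so that
\begin{equation*}
\int_{\Omega}|f(x)|^{p}\,dx
\le \frac{p}{Q}\int_{\Omega}|f(x)|^{p-1}\,|x|\,|\mathcal{R}f(x)|\,dx
\le \frac{Rp}{Q}\int_{\Omega}|f(x)|^{p-1}\,|\mathcal{R}f(x)|\,dx.
\end{equation*}
Then apply Hölder's inequality with exponents $q=\frac{p}{p-1}$ and $p$ to the last integral, exactly as in the proof of Theorem \ref{RemSobolev}(i), obtaining
\begin{equation*}
\int_{\Omega}|f(x)|^{p}\,dx
\le \frac{Rp}{Q}\left(\int_{\Omega}|f(x)|^{p}\,dx\right)^{1-\frac1p}\|\mathcal{R}f\|_{L^{p}(\Omega)},
\end{equation*}
and dividing through by the (finite, nonzero) factor $\big(\int_{\Omega}|f|^{p}\big)^{1-1/p}$ yields \eqref{Po01}.

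The only genuinely delicate point is the density/approximation argument at the boundary of the class $\widehat{\mathfrak{L}}_0^{1,p}(\Omega)$: one must check that for $f$ in this completion the identity above still makes sense and that $\mathcal{R}f$, which a priori is only controlled through $\mathbb{E}f\in L^p(\Omega)$, can indeed be paired in the displayed way — this is precisely why the hypothesis $\mathcal{R}f\in L^p(\Omega)$ is imposed as a separate assumption. For $f\in C_0^\infty(\Omega\setminus\{0\})$ everything is classical, so I expect no obstacle there; passing to the limit requires only that $f_j\to f$ in $L^p(\Omega)$ and $\mathcal{R}f_j\to \mathcal{R}f$ in $L^p(\Omega)$ along an approximating sequence, after which both sides of \eqref{Po01} pass to the limit by continuity of the norms. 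I would remark that no sharpness claim is made for the constant $\frac{Rp}{Q}$, which is consistent with the crude bound $|x|\le R$ used above.
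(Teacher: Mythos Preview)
Your computation for $f\in C_0^\infty(\Omega\setminus\{0\})$ is correct and matches the paper. The gap is in the density step. You assert that ``both sides are continuous in the norm $\|\cdot\|_{\widehat{\mathfrak{L}}^{1,p}(\Omega)}$'' and later that one only needs $\mathcal{R}f_j\to\mathcal{R}f$ in $L^p(\Omega)$. But the norm on $\widehat{\mathfrak{L}}_0^{1,p}(\Omega)$ controls $\|\mathbb{E}f\|_{L^p(\Omega)}$, not $\|\mathcal{R}f\|_{L^p(\Omega)}=\|\tfrac{1}{|x|}\mathbb{E}f\|_{L^p(\Omega)}$; when $0\in\overline{\Omega}$ the map $f\mapsto\|\mathcal{R}f\|_{L^p(\Omega)}$ is \emph{not} continuous in that topology, and there is no reason an approximating sequence $f_j\in C_0^\infty(\Omega\setminus\{0\})$ should satisfy $\mathcal{R}f_j\to\mathcal{R}f$ in $L^p$. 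The extra hypothesis $\mathcal{R}f\in L^p(\Omega)$ tells you the right-hand side of \eqref{Po01} is finite for the limit function, but it does not manufacture convergence along the sequence.

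The paper avoids this by reordering your two steps: first it establishes, via density (Proposition~\ref{Poprop}), the intermediate inequality
\[
\|f\|_{L^p(\Omega)}\le \frac{p}{Q}\,\|\mathbb{E}f\|_{L^p(\Omega)}
\]
for every $f\in\widehat{\mathfrak{L}}_0^{1,p}(\Omega)$, which is legitimate because both sides here \emph{are} continuous in the $\widehat{\mathfrak{L}}^{1,p}$ norm. Only after that does it invoke the pointwise bound $|\mathbb{E}f(x)|=|x|\,|\mathcal{R}f(x)|\le R\,|\mathcal{R}f(x)|$ on $\Omega$, which holds for the limit function itself and needs no approximation. If you simply swap the order---pass to the limit in $\|f\|_{L^p}\le\frac{p}{Q}\|\mathbb{E}f\|_{L^p}$ first, then use $|x|\le R$---your argument becomes complete.
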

In order to prove Theorem \ref{Po}, we first show the following proposition.
\begin{prop}\label{Poprop}
Let $\Omega \subset \mathbb{G}$ be an open set. If $1<p<\infty, \;f\in \widehat{\mathfrak{L}}_{0}^{1,p}(\Omega)$ and $\mathbb{E}f\in L^{p}(\Omega)$, then we have
\begin{equation} \label{Po1}
\|f\|_{L^{p}(\Omega)}\leq \frac{p}{Q}\|\mathbb{E}f\|_{L^{p}(\Omega)}.
\end{equation}
\end{prop}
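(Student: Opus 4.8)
The plan is to mimic the proof of Part (i) of Theorem \ref{RemSobolev}, but carried out on the open set $\Omega$ instead of all of $\mathbb{G}$, and to observe that no boundary terms appear precisely because $f$ is in the completion $\widehat{\mathfrak{L}}_{0}^{1,p}(\Omega)$ of $C^{\infty}_{0}(\Omega\backslash\{0\})$. First I would reduce to the case $f\in C^{\infty}_{0}(\Omega\backslash\{0\})$; since both sides of \eqref{Po1} are continuous with respect to the norm $\|\cdot\|_{\widehat{\mathfrak{L}}^{1,p}(\Omega)}$ (the left side is dominated by this norm, and the right side equals $\frac{p}{Q}\|\mathbb{E}f\|_{L^p(\Omega)}$, which is also dominated by it), the inequality extends from the dense subspace to all of $\widehat{\mathfrak{L}}_{0}^{1,p}(\Omega)$ once it is established for smooth compactly supported functions.

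For $f\in C^{\infty}_{0}(\Omega\backslash\{0\})$, extend $f$ by zero to all of $\mathbb{G}$; the extension lies in $C^{\infty}_{0}(\mathbb{G}\backslash\{0\})$, and crucially $\int_\Omega |f|^p\,dx = \int_{\mathbb{G}}|f|^p\,dx$ and $\int_\Omega |\mathbb{E}f|^p\,dx = \int_{\mathbb{G}}|\mathbb{E}f|^p\,dx$, since $\mathbb{E}f = |x|\mathcal{R}f$ also vanishes outside $\Omega$. Then I would apply the chain of computations from the proof of Theorem \ref{RemSobolev}(i) verbatim: introduce polar coordinates via \eqref{EQ:polar}, integrate by parts in $r$ using \eqref{dfdr} to obtain
$$
\int_{\mathbb{G}}|f(x)|^{p}\,dx = -\frac{p}{Q}\,{\rm Re}\int_{\mathbb{G}}|f(x)|^{p-2}f(x)\,\overline{\mathbb{E}f(x)}\,dx,
$$
and then apply H\"older's inequality with exponents $p$ and $q=p/(p-1)$ to get
$$
\int_{\mathbb{G}}|f|^{p}\,dx \le \frac{p}{Q}\Big(\int_{\mathbb{G}}|f|^{p}\,dx\Big)^{1-\frac1p}\|\mathbb{E}f\|_{L^{p}(\mathbb{G})}.
$$
Dividing through gives $\|f\|_{L^p(\mathbb{G})}\le \frac{p}{Q}\|\mathbb{E}f\|_{L^p(\mathbb{G})}$, which, after identifying the $\mathbb{G}$-integrals with the $\Omega$-integrals, is exactly \eqref{Po1}. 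Alternatively, one can simply invoke \eqref{LpSobolevinC} directly for the zero-extension and then restrict back to $\Omega$; either way the content is the same.

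The only genuinely delicate point — and the one I would be careful to state explicitly — is the legitimacy of the integration by parts and the vanishing of the boundary contribution at $r\to 0$ and $r\to\infty$ in the polar integral; this is exactly where the hypothesis $f\in C^{\infty}_{0}(\Omega\backslash\{0\})$ is used (support bounded away from the origin and compact in $\Omega$), and it is precisely the reason the completion is taken with respect to $\|\cdot\|_{\widehat{\mathfrak{L}}^{1,p}(\Omega)}$ rather than merely $\|\mathbb{E}\cdot\|_{L^p(\Omega)}$: one needs $\|f\|_{L^p(\Omega)}$ in the norm to guarantee that the limiting object is an honest $L^p$ function on which the inequality, being an inequality between two continuous functionals, persists. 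There is no analytic obstacle beyond this bookkeeping; the result is essentially a localized restatement of Theorem \ref{RemSobolev}(i). (Theorem \ref{Po} itself will then follow from Proposition \ref{Poprop} by writing $\mathbb{E}f = |x|\mathcal{R}f$ and bounding $|x|\le R$ on $\Omega$.)
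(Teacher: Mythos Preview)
Your argument is correct and, in fact, more direct than the paper's. You reduce immediately to $f\in C^{\infty}_{0}(\Omega\backslash\{0\})$, invoke \eqref{LpSobolevinC} for the zero-extension, and then pass to the completion by continuity of both sides in the $\widehat{\mathfrak{L}}^{1,p}(\Omega)$-norm; nothing more is needed.

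The paper proceeds differently: after picking an approximating sequence $f_\ell\in C^{\infty}_{0}(\Omega\backslash\{0\})$, it introduces an additional radial cutoff $\zeta_\lambda(x)=\zeta(\lambda|x|)$ (equal to $1$ on $\{|x|\le 1/\lambda\}$ and $0$ on $\{|x|\ge 2/\lambda\}$), applies \eqref{LpSobolevinC} to $\zeta_\lambda f_\ell$, uses the Leibniz rule $\mathbb{E}(\zeta_\lambda f_\ell)=(\mathbb{E}\zeta_\lambda)f_\ell+\zeta_\lambda\mathbb{E}f_\ell$, sends $\ell\to\infty$ first, and finally lets $\lambda\to 0$, using that $\mathbb{E}\zeta_\lambda$ is uniformly bounded and supported in the annulus $\{1/\lambda<|x|<2/\lambda\}$ to make the commutator term $\|(\mathbb{E}\zeta_\lambda)f\|_{L^p(\Omega)}$ vanish in the limit. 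This two-step limiting procedure is a standard device when one wants to avoid committing to the identification of $\mathbb{E}f$ as a genuine $L^p$-limit, or when the ambient space is defined by weaker conditions; here, however, since $\widehat{\mathfrak{L}}_{0}^{1,p}(\Omega)$ is by definition the completion in the norm $\|f\|_{L^p}+\|\mathbb{E}f\|_{L^p}$, both $f_\ell\to f$ and $\mathbb{E}f_\ell\to\mathbb{E}f$ in $L^p$ are automatic, and your single-limit density argument already closes the proof. The cutoff buys robustness in more general settings but is not needed for the statement as formulated.
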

\begin{proof}[Proof of Proposition \ref{Poprop}]
Let $\zeta:\mathbb{R}\rightarrow\mathbb{R}$ be an even, smooth function satisfying
\begin{itemize}
\item
$0\leq\zeta\leq1,$
\item
$\zeta(r)=1 \;\;{\rm if}\;\; |r|\leq1,$
\item
$\zeta(r)=0 \;\;{\rm if}\;\; |r|\geq2.$
\end{itemize}
For $\lambda>0$, we set $\zeta_{\lambda}(x):=\zeta(\lambda|x|)$.
We have the inequality \eqref{Po1} for $f\in C_{0}^{\infty}(\mathbb{G}\backslash\{0\})$ by \eqref{LpSobolevinC}. There exists some $\{f_{\ell}\}_{\ell=1}^{\infty} \in C_{0}^{\infty}(\Omega\backslash\{0\})$ such that $f_{\ell}\rightarrow f$ in $\widehat{\mathfrak{L}}_{0}^{1,p}(\Omega)$ as $\ell\rightarrow\infty$. Let $\lambda>0$. From \eqref{LpSobolevinC} we obtain
$$\|\zeta_{\lambda}f_{\ell}\|_{L^{p}(\Omega)}\leq\frac{p}{Q}\left(\|(\mathbb{E}\zeta_{\lambda})f_{\ell}\|_{L^{p}(\Omega)}+
\|\zeta_{\lambda}(\mathbb{E}f_{\ell})\|_{L^{p}(\Omega)}\right)$$
for all $\ell\geq1$. It is easy to see that
$$\lim_{\ell\rightarrow\infty}\zeta_{\lambda}f_{\ell}=\zeta_{\lambda}f,$$
$$\lim_{\ell\rightarrow\infty}(\mathbb{E}\zeta_{\lambda})f_{\ell}=(\mathbb{E}\zeta_{\lambda})f,$$
$$\lim_{\ell\rightarrow\infty}\zeta_{\lambda}(\mathbb{E}f_{\ell})=\zeta_{\lambda}(\mathbb{E} f)$$
in $L^{p}(\Omega)$. These properties imply that
$$\|\zeta_{\lambda}f\|_{L^{p}(\Omega)}\leq\frac{p}{Q}\left\{\|(\mathbb{E}\zeta_{\lambda})f\|_{L^{p}(\Omega)}+
\|\zeta_{\lambda}(\mathbb{E}f)\|_{L^{p}(\Omega)}\right\}.$$
Since
$$|(\mathbb{E}\zeta_{\lambda})(x)|\leq \begin{cases}
\sup|\mathbb{E}\zeta|, \;\;{\rm if}\;\; \lambda^{-1}<|x|<2\lambda^{-1};\\
0, \;\; {\rm otherwise},
\end{cases}$$
we obtain \eqref{Po1} in the limit as $\lambda\rightarrow 0$.
\end{proof}
\begin{proof}[Proof of Theorem \ref{Po}] Since $R=\underset{x\in \Omega}{\rm sup}|x|$ using Proposition \ref{Poprop} we obtain
\begin{equation*}\|f\|_{L^{p}(\Omega)}\leq \frac{p}{Q}\|\mathbb{E}f\|_{L^{p}(\Omega)}\\ \leq
\frac{R p}{Q}\|\mathcal{R}f\|_{L^{p}(\Omega)}=\frac{R p}{Q}\left\|\frac{1}{|x|}\mathbb{E}f\right\|_{L^{p}(\Omega)},
\end{equation*}
which gives \eqref{Po01}.
\end{proof}

\section{Sobolev-Lorentz-Zygmund spaces}
\label{SLZ}
In this section, we consider applications of critical Hardy type inequalities to function spaces. The function spaces below extend some known results in the abelian case $\mathbb{R}^{n}$, see e.g. \cite{MOW15c}.

We define the Lorentz type spaces by
$$L_{|\cdot|,Q,p,q}(\mathbb{G}):=\{f\in L^{1}_{loc}(\mathbb{G}):\|f\|_{L_{|\cdot|,Q,p,q}(\mathbb{G})}<\infty\}, \quad 0\leq p,q\leq\infty,$$
where
$$\|f\|_{L_{|\cdot|,Q,p,q}(\mathbb{G})}:=\left(\int_{\mathbb{G}}(|x|^{\frac{Q}{p}}|f(x)|)^{q}
\frac{1}{|x|^{Q}}dx\right)^{\frac{1}{q}}.$$
We assume that the homogeneous dimension $Q$ and the homogeneous quasi-norm $|\cdot|$ are fixed. Therefore, we can use the short notation
$$L_{p,q}(\mathbb{G}):=L_{|\cdot|,Q,p,q}(\mathbb{G}).$$
The Lorentz-Zygmund spaces on $\mathbb{G}$ can be defined by
$$L_{p,q,\lambda}(\mathbb{G}):=\{f\in L^{1}_{loc}(\mathbb{G}):\|f\|_{L_{p,q,\lambda}(\mathbb{G})}<\infty\}, \quad 0\leq p,q\leq\infty, \; \lambda \in \mathbb{R},$$
where
$$\|f\|_{L_{p,q,\lambda}(\mathbb{G})}:=\underset{R>0}{\rm sup}\left(\int_{\mathbb{G}}\left(|x|^{\frac{Q}{p}}
\left|\log\frac{R}{|x|}\right|^{\lambda}|f(x)|\right)^{q}
\frac{1}{|x|^{Q}}dx\right)^{\frac{1}{q}}.$$
Then we define the Sobolev-Lorentz-Zygmund spaces by
$$
W^{1}L_{p,q,\lambda}(\mathbb{G}):=\left\{f\in L_{p,q,\lambda}(\mathbb{G}):\frac{1}{|x|}\mathbb{E} f \in L_{p,q,\lambda}(\mathbb{G})\right\},
$$
endowed with the norm
$$\|\cdot\|_{W^{1}L_{p,q,\lambda}(\mathbb{G})}:=\|\cdot\|_{L_{p,q,\lambda}(\mathbb{G})}+
\left\|\frac{1}{|x|}\mathbb{E}\cdot\right\|_{L_{p,q,\lambda}(\mathbb{G})},$$
and $W^{1}_{0}L_{p,q,\lambda}(\mathbb{G}):=\overline{C_{0}^{\infty}(\mathbb{G})}^{\|\cdot\|_{W^{1}L_{p,q,\lambda}(\mathbb{G})}}$. The Lorentz-Zygmund spaces involving the double logarithmic weights are introduced by
$$L_{p,q,\lambda_{1},\lambda_{2}}(\mathbb{G}):=\{f\in L^{1}_{loc}(\mathbb{G}): \|f\|_{L_{p,q,\lambda_{1},\lambda_{2}}(\mathbb{G})}<\infty\},$$
where $\lambda_{1},\lambda_{2} \in \mathbb{R}$ and
$$
\|f\|_{L_{p,q,\lambda_{1},\lambda_{2}}(\mathbb{G})}:=
$$
$$\underset{R>0}{\rm sup}\left(\int_{\mathbb{G}}\left(|x|^{\frac{Q}{p}}
\left|\log\frac{R}{|x|}\right|^{\lambda_{1}}\left|\log\left|\log\frac{R}{|x|}\right|\right|^{\lambda_{2}}
|f(x)|\right)^{q}
\frac{dx}{|x|^{Q}}\right)^{\frac{1}{q}}.
$$
\begin{rem} The space $L_{p,q,\lambda_{1},\lambda_{2}}(\mathbb{G})$ extends the spaces $L_{p,q,\lambda}(\mathbb{G})$ and $L_{p,q}(\mathbb{G})$ in the sence that $L_{p,q,\lambda,0}(\mathbb{G})=L_{p,q,\lambda}(\mathbb{G})$ and $L_{p,q,0,0}(\mathbb{G})=L_{p,q}(\mathbb{G})$.
\end{rem}
Similarly, the Sobolev-Lorentz-Zygmund spaces $W^{1}L_{p,q,\lambda_{1},\lambda_{2}}(\mathbb{G})$ are defined by
\begin{equation}\label{SLZdef}
W^{1}L_{p,q,\lambda_{1},\lambda_{2}}(\mathbb{G}):=\left\{f\in L_{p,q,\lambda_{1},\lambda_{2}}(\mathbb{G}):\frac{1}{|x|}\mathbb{E}f \in L_{p,q,\lambda_{1},\lambda_{2}}(\mathbb{G})\right\},
\end{equation}
endowed with the norm
$$\|\cdot\|_{W^{1}L_{p,q,\lambda_{1},\lambda_{2}}(\mathbb{G})}:=\|\cdot\|_{L_{p,q,\lambda_{1},\lambda_{2}}(\mathbb{G})}+
\left\|\frac{1}{|x|}\mathbb{E}\cdot\right\|_{L_{p,q,\lambda_{1},\lambda_{2}}(\mathbb{G})},$$
and
\begin{equation}\label{SLZdef1}W^{1}_{0}L_{p,q,\lambda_{1},\lambda_{2}}(\mathbb{G}):=\overline{C_{0}^{\infty}
(\mathbb{G})}^{\|\cdot\|_{W^{1}L_{p,q,\lambda_{1},\lambda_{2}}(\mathbb{G})}}.\end{equation}

Now we introduce the Lorentz-Zygmund type spaces $\mathfrak{L}_{p,q,\lambda}(\mathbb{G})$ taking into account the special behavior of functions,
$$\mathfrak{L}_{p,q,\lambda}(\mathbb{G}):=\{f \in L^{1}_{loc}(\mathbb{G}):\|f\|_{\mathfrak{L}_{p,q,\lambda}(\mathbb{G})}< \infty \}, \quad \lambda \in \mathbb{R},$$
where
$$\|f\|_{\mathfrak{L}_{p,q,\lambda}(\mathbb{G})}:=\underset{R>0}{\rm sup}\left(\int_{\mathbb{G}}\left(|x|^\frac{Q}{p}
\left|\log\frac{R}{|x|}\right|^{\lambda}|f-f_{R}|\right)^{q}\frac{dx}{|x|^{Q}}\right)^{\frac{1}{q}}.$$
For $p=\infty$ we define
$$\|f\|_{\mathfrak{L}_{\infty,q,\lambda}(\mathbb{G})}:=\underset{R>0}{\rm sup}\left(\int_{\mathbb{G}}\left(
\left|\log\frac{R}{|x|}\right|^{\lambda}|f-f_{R}|\right)^{q}\frac{dx}{|x|^{Q}}\right)^{\frac{1}{q}},$$
where $f_{R}(x):=f(R\frac{x}{|x|})$.

Moreover, we define the Lorentz-Zygmund type spaces $\mathfrak{L}_{p,q,\lambda_{1},\lambda_{2}}(\mathbb{G})$ by
\begin{equation}\label{LZdef}\mathfrak{L}_{p,q,\lambda_{1},\lambda_{2}}(\mathbb{G}):=\{f \in L^{1}_{loc}(\mathbb{G}):\|f\|_{\mathfrak{L}_{p,q,\lambda_{1},\lambda_{2}}(\mathbb{G})}< \infty \},\end{equation}
where
\begin{multline*}
\|f\|_{\mathfrak{L}_{p,q,\lambda_{1},\lambda_{2}}(\mathbb{G})}:=
\underset{R>0}{\rm sup}\left(\int_{\mathbb{G}}\left(|x|^\frac{Q}{p}
\left|\log\frac{e R}{|x|}\right|^{\lambda_{1}}\left|\log\left|\log\frac{e R}{|x|}\right|\right|^{\lambda_{2}} \right.\right.
\\ \left.\left. \times
\left(\chi_{B(0,eR)}(x)|f-f_{R}|
+\chi_{B^{c}(0,eR)}(x)|f-f_{e^{2}R}|\right)\right)^{q}\frac{dx}{|x|^{Q}}\right)^{\frac{1}{q}},
\end{multline*}
$$\chi_{B(0,eR)}(x)=\begin{cases}
1, \;\; x\in B(0,eR);\\
0, \;\; x\notin B(0,eR).
\end{cases}$$
For $p=\infty$ we define
\begin{multline*}
\|f\|_{\mathfrak{L}_{\infty,q,\lambda_{1},\lambda_{2}}(\mathbb{G})}:=
\underset{R>0}{\rm sup}\left(\int_{\mathbb{G}}\left(
\left|\log\frac{e R}{|x|}\right|^{\lambda_{1}}\left|\log\left|\log\frac{e R}{|x|}\right|\right|^{\lambda_{2}} \right.\right.
\\\left. \left. \times \left(\chi_{B(0,eR)}(x)|f-f_{R}|
+\chi_{B^{c}(0,eR)}(x)|f-f_{e^{2}R}|\right)\right)^{q}\frac{dx}{|x|^{Q}}\right)^{\frac{1}{q}}.
\end{multline*}
\begin{thm}\label{SLZ0} Let $\mathbb{G}$ be a homogeneous group of homogeneous dimension $Q$ and
let $|\cdot|$ be any homogeneous quasi-norm on $\mathbb{G}$. Let $1<\gamma<\infty$ and $\max\{1,\gamma-1\}<q<\infty$. Then the continuous embedding
\begin{equation*}W^{1}_{0}L_{Q,q,\frac{q-1}{q},
\frac{q-\gamma}{q}}(\mathbb{G})\hookrightarrow \mathfrak{L}_{\infty,q,-\frac{1}{q},
-\frac{\gamma}{q}}(\mathbb{G})
\end{equation*}
holds. In particular, for all $f\in W^{1}_{0}L_{{Q,q,\frac{q-1}{q},
\frac{q-\gamma}{q}}}(\mathbb{G})$ and for any $R>0$ the following inequality holds
\begin{multline}\label{SLZ1}
\left(\int_{\mathbb{G}}\frac{\chi_{B(0,eR)}(x)|f-f_{R}|^{q}+
\chi_{B^{c}(0,eR)}(x)|f-f_{e^{2}R}|^{q}}
{\left|\log\left|\log\frac{e R}{|x|}\right|\right|^{\gamma}\left|\log\frac{e R}{|x|}\right|}\frac{dx}{|x|^{Q}}\right)^{\frac{1}{q}}\\
\leq \frac{q}{\gamma-1}\left(\int_{\mathbb{G}}|x|^{q-Q}
\left|\log\frac{e R}{|x|}\right|^{q-1}\left|\log\left|\log\frac{e R}{|x|}\right|\right|^{q-\gamma}\left|\frac{1}{|x|}\mathbb{E}f\right|
^{q}dx\right)^{\frac{1}{q}},
\end{multline}
where the embedding constant $\frac{q}{\gamma-1}$ is sharp and $f_{R}(x):=f(R\frac{x}{|x|})$.
\end{thm}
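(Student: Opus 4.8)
The plan is to reduce \eqref{SLZ1} to a one-dimensional weighted Hardy inequality on the line, via the polar decomposition \eqref{EQ:polar} together with a double-logarithmic change of the radial variable, and then to read off the sharpness of $\frac{q}{\gamma-1}$ from the sharpness of that one-dimensional inequality. Fix $R>0$ once and for all; it suffices to prove \eqref{SLZ1} for this $R$, since taking the supremum over $R>0$ on both sides then yields the claimed continuous embedding $W^{1}_{0}L_{Q,q,\frac{q-1}{q},\frac{q-\gamma}{q}}(\mathbb{G})\hookrightarrow\mathfrak{L}_{\infty,q,-\frac{1}{q},-\frac{\gamma}{q}}(\mathbb{G})$, by the definitions \eqref{SLZdef1} and \eqref{LZdef}. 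By density it is enough to take $f\in C^{\infty}_{0}(\mathbb{G})$.

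First I would pass to polar coordinates $x=ry$, $r=|x|$, $y\in\wp$, using \eqref{EQ:polar}, so that $\frac{dx}{|x|^{Q}}=\frac{dr}{r}\,d\sigma(y)$ and $dx=r^{Q-1}\,dr\,d\sigma(y)$; for fixed $y$ put $g(r):=f(ry)$, and note that $\frac{1}{|x|}\mathbb{E}f(ry)=g'(r)$ by \eqref{dfdr} and \eqref{EQ:def-Euler}, while $f_{R}(ry)=g(R)$ and $f_{e^{2}R}(ry)=g(e^{2}R)$. Splitting the $r$-integral at $r=eR$ to match the two characteristic functions, I would substitute $u=\log\log\frac{eR}{r}$ on the region $r\in(0,eR)$ — so that $r=R$, carrying the base value $g(R)$, corresponds to $u=0$ — and $v=\log\log\frac{r}{eR}$ on $r\in(eR,\infty)$ — so that $r=e^{2}R$, carrying $g(e^{2}R)$, corresponds to $v=0$. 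Computing the Jacobians, in which the single-logarithm factor $|\log\frac{eR}{r}|$ cancels against $\frac{dr}{r}$ in the left-hand integrand and against the weight $|\log\frac{eR}{r}|^{q-1}$ together with the powers of $r$ in the right-hand integrand, one finds (with $H(u):=g(r(u))$) that, after raising \eqref{SLZ1} to the power $q$, the region $r<eR$ contributes $\int_{\wp}\int_{-\infty}^{\infty}\frac{|H(u)-H(0)|^{q}}{|u|^{\gamma}}\,du\,d\sigma(y)$ to the left-hand side and $\left(\frac{q}{\gamma-1}\right)^{q}\int_{\wp}\int_{-\infty}^{\infty}|H'(u)|^{q}|u|^{q-\gamma}\,du\,d\sigma(y)$ to the right-hand side, with the analogous identity for $r>eR$ in the variable $v$.

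It then remains to invoke the classical one-dimensional weighted Hardy inequality: for $q>1$, $\gamma>1$ and absolutely continuous $H$ with $H(0)=0$,
\[
\int_{0}^{\infty}\frac{|H(u)|^{q}}{u^{\gamma}}\,du\le\left(\frac{q}{\gamma-1}\right)^{q}\int_{0}^{\infty}|H'(u)|^{q}u^{q-\gamma}\,du,
\]
which follows from $H(u)=\int_{0}^{u}H'$ and Hardy's inequality with weight $u^{q-\gamma}$ — admissible exactly because $q-\gamma>-1$, i.e. $q>\gamma-1$, one of the standing hypotheses (together with $q>1$, also assumed). Applying this, and its mirror image under $u\mapsto-u$, to $H-H(0)$, and likewise in the $v$-variable, then integrating over $\wp$ and summing the two regions, proves \eqref{SLZ1} for $f\in C^{\infty}_{0}(\mathbb{G})$; the extension to all of $W^{1}_{0}L_{Q,q,\frac{q-1}{q},\frac{q-\gamma}{q}}(\mathbb{G})$, and hence the embedding, follows since this space is by \eqref{SLZdef1} the closure of $C^{\infty}_{0}(\mathbb{G})$ in the corresponding norm.

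For the sharpness of $\frac{q}{\gamma-1}$ I would test on radial $f(x)=\phi(|x|)$, for which $f_{R}$ and $f_{e^{2}R}$ are constants and \eqref{SLZ1} collapses exactly onto the displayed one-dimensional inequality, whose sharpness is classical: it is approached by truncated power functions $H_{N}$ equal to (a smoothed version of) $u\mapsto u^{(\gamma-1)/q}$ on $1\le u\le N$ and vanishing for $u\le 1$, for which both integrals grow like $\log N$ and their ratio tends to $\left(\frac{q}{\gamma-1}\right)^{q}$ as $N\to\infty$. Pulling such $H_{N}$ back through $u=\log\log\frac{eR}{r}$ yields radial functions supported in a fixed annulus inside $B(0,eR)\setminus\{0\}$, so that the region $r\ge eR$ contributes nothing and the ratio of the two sides of \eqref{SLZ1} tends to $\frac{q}{\gamma-1}$. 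I expect the main obstacles to be bookkeeping ones: verifying that the double-logarithmic weights in \eqref{SLZ1} match \emph{precisely} the clean weights $u^{-\gamma}$ and $u^{q-\gamma}$ produced by the substitutions, including all cancellations of the single logarithm and of the powers of $r$; and, for the sharpness, arranging the test functions to be genuinely admissible — that is, approximable in the norm of $W^{1}_{0}L_{Q,q,\frac{q-1}{q},\frac{q-\gamma}{q}}(\mathbb{G})$ — while supported so that only one of the two regions is active.
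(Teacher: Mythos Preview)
Your approach is correct, and it takes a genuinely different route from the paper's. The paper splits the range into $B(0,eR)$ and $B^{c}(0,eR)$, works directly in the radial variable $r$, integrates by parts against the weight $(\log(\log\frac{eR}{r}))^{1-\gamma}$, handles the boundary terms at $r=R$ and $r=e^{2}R$ by the elementary inequalities $\log\log\frac{eR}{r}\ge\frac{R-r}{R\log(eR/r)}$, and then applies H\"older --- effectively rederiving the weighted Hardy inequality in situ. You instead make the double-logarithmic substitutions $u=\log\log\frac{eR}{r}$ on $(0,eR)$ and $v=\log\log\frac{r}{eR}$ on $(eR,\infty)$, which collapse both sides exactly to $\int|H-H(0)|^{q}|u|^{-\gamma}\,du$ versus $(\tfrac{q}{\gamma-1})^{q}\int|H'|^{q}|u|^{q-\gamma}\,du$, so that the whole inequality becomes the classical one-dimensional weighted Hardy inequality with best constant $\frac{q}{\gamma-1}$. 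This is shorter and makes the role of the hypotheses $\gamma>1$ and $q>\gamma-1$ transparent as exactly the admissibility conditions for that Hardy inequality. Your sharpness argument is in fact the paper's in disguise: the paper's test functions $f_{\ell}(x)=(\log\log\frac{eR}{|x|})^{(\gamma-1)/q}$ on $\frac{1}{\ell}\le|x|\le\frac{R}{2}$ are precisely $u^{(\gamma-1)/q}$ on a growing interval after your substitution.

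One point deserves more care than you give it: the passage from $C_{0}^{\infty}(\mathbb{G})$ to $W^{1}_{0}L_{Q,q,\frac{q-1}{q},\frac{q-\gamma}{q}}(\mathbb{G})$ is not automatic, because the left-hand side of \eqref{SLZ1} involves the trace-type quantities $f_{R}(x)=f(R\tfrac{x}{|x|})$ and $f_{e^{2}R}(x)$, which are not a priori continuous in the Sobolev--Lorentz--Zygmund norm. The paper deals with this by applying the already-proved inequality to differences $f_{m}-f_{k}$ to get a Cauchy sequence in the weighted $L^{q}$ on the left, and then arguing that the a.e.\ limit of $f_{m}(R\tfrac{x}{|x|})$ is $f(R\tfrac{x}{|x|})$. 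Your sketch should flag this step rather than absorb it into ``by density''.
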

\begin{rem}
Despite the integrand on the right-hand side of \eqref{SLZ1} has singularities for $|x|=R, |x|=eR$, and $|x|=e^{2}R$ we do not need to subtract the boundary value of functions on $|x|=e R$ on the left-hand side.
\end{rem}
\begin{rem} In the abelian case of the Euclidean space $\mathbb{G}=\mathbb{R}^{n}$, we have $Q=n$ and taking $|\cdot|$ to be the Euclidean norm, Theorem \ref{SLZ0} implies \cite[Theorem 1.2]{MOW15c}.
\end{rem}
In order to prove Theorem \ref{SLZ0}, let us first present the following proposition.
\begin{prop} \label{SLZ01} Let $\mathbb{G}$ be a homogeneous group of homogeneous dimension $Q$ and
let $|\cdot|$ be any homogeneous quasi-norm on $\mathbb{G}$. Let $1<\gamma<\infty$ and $\max\{1,\gamma-1\}<q<\infty$. Then for all $f\in C_{0}^{\infty}(\mathbb{G})$ and any $R>0$ the following inequality holds
\begin{multline}\label{SLZ2}
\left(\int_{B(0,e R)}\frac{|f-f_{R}|^{q}}{\left|\log\left|\log\frac{e R}{|x|}\right|\right|^{\gamma}\left|\log\frac{e R}{|x|}\right|}\frac{dx}{|x|^{Q}}\right)^{\frac{1}{q}}\\
\leq\frac{q}{\gamma-1}\left(\int_{B(0,e R)}|x|^{q-Q}
\left|\log\frac{e R}{|x|}\right|^{q-1}\left|\log\left|\log\frac{e R}{|x|}\right|\right|^{q-\gamma}\left|\frac{1}{|x|}\mathbb{E}f\right|
^{q}dx\right)^{\frac{1}{q}}.
\end{multline}
\end{prop}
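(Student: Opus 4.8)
The plan is to reduce the multi-dimensional inequality \eqref{SLZ2} to a one-dimensional weighted Hardy-type inequality on the half-line via the polar decomposition \eqref{EQ:polar}, and then prove that one-dimensional inequality by an integration-by-parts argument combined with H\"older's inequality. First I would fix $R>0$ and pass to polar coordinates $x=r y$, $r=|x|\in(0,\infty)$, $y\in\wp$. Since $f\in C_0^\infty(\mathbb{G})$, for each fixed $y$ the function $g(r):=f(ry)-f(Ry)=f(ry)-f_R(ry)$ is smooth, and by \eqref{dfdr} we have $\frac{d}{dr}g(r)=\frac{d}{dr}f(ry)=\mathcal{R}f(ry)=\frac{1}{r}\mathbb{E}f(ry)$, so that $g(R)=0$. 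The domain $B(0,eR)$ becomes $r\in(0,eR)$, and after substituting $r^{Q-1}d\sigma(y)dr$ for $dx$ the inequality \eqref{SLZ2} will follow by integrating over $\wp$ from the one-variable estimate
$$
\left(\int_0^{eR}\frac{|g(r)|^q}{\left|\log\left|\log\frac{eR}{r}\right|\right|^{\gamma}\left|\log\frac{eR}{r}\right|}\frac{dr}{r}\right)^{1/q}
\le \frac{q}{\gamma-1}\left(\int_0^{eR} r^{q-1}\left|\log\frac{eR}{r}\right|^{q-1}\left|\log\left|\log\frac{eR}{r}\right|\right|^{q-\gamma}|g'(r)|^q\,dr\right)^{1/q},
$$
valid for smooth $g$ with $g(R)=0$ — indeed, raising both sides to the $q$-th power, multiplying by $r^{Q-1}$, and integrating $d\sigma(y)$ over $\wp$ reproduces \eqref{SLZ2} exactly, since $|g'(r)|^q r^{Q-1}=\left|\frac{1}{|x|}\mathbb{E}f\right|^q |x|^{q}\cdot r^{Q-1-q}$ matches the weight $|x|^{q-Q}$ on the right-hand side.

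To prove the one-dimensional inequality I would perform the change of variables $t=\log\frac{eR}{r}$, so $r$ ranging over $(0,eR)$ corresponds to $t\in(0,\infty)$, $\frac{dr}{r}=-dt$, and $r\,dr$ transforms accordingly; the point $r=R$ corresponds to $t=1$, which is where $g$ vanishes. In the variable $t$ the left-hand side becomes (up to the sign, absorbed by orientation) $\left(\int_0^\infty \frac{|G(t)|^q}{|\log t|^{\gamma}\,t}\,dt\right)^{1/q}$ with $G(t):=g(r(t))$, $G(1)=0$, and the right-hand side becomes a constant times $\left(\int_0^\infty t^{q-1}|\log t|^{q-\gamma}|G'(t)|^q\,dt\right)^{1/q}$ after tracking the powers of $r=eR e^{-t}$. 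The heart of the matter is then the scalar Hardy inequality
$$
\int_0^\infty\frac{|G(t)|^q}{t\,|\log t|^{\gamma}}\,dt \le \left(\frac{q}{\gamma-1}\right)^q\int_0^\infty \frac{|G'(t)|^q\,t^{q-1}}{|\log t|^{\gamma-q}}\,dt
$$
for $G$ with $G(1)=0$; this I would establish by writing $G(t)=\int_1^t G'(s)\,ds$, estimating $|G(t)|$ pointwise, integrating by parts in the left-hand integral using that $\frac{d}{dt}\left(|\log t|^{1-\gamma}\right)$ produces exactly the weight $\frac{1}{t|\log t|^{\gamma}}$ up to the factor $\gamma-1$ (splitting the integral at $t=1$ to handle the two signs of $\log t$, where the hypothesis $\gamma>1$ guarantees the boundary terms at $t=1$ vanish and those at $0,\infty$ are controlled), and then applying H\"older's inequality with exponents $q$ and $q/(q-1)$. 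The condition $\max\{1,\gamma-1\}<q$ is precisely what makes the relevant exponents admissible and the boundary contributions integrable.

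The main obstacle I anticipate is the careful bookkeeping near the degeneracies: the integrand has singularities at $r=R$ (i.e. $t=1$, where $\log\frac{eR}{r}=1$ so $\log|\log\frac{eR}{r}|=0$) and at $r=eR$ (i.e. $t=0$), and one must verify that after integration by parts the boundary terms genuinely vanish or have the right sign rather than merely hoping the weights cooperate. Since $g(R)=0$, near $t=1$ we have $|G(t)|\lesssim|t-1|$, which beats the $|\log t|^{-\gamma}\sim|t-1|^{-\gamma}$ singularity only after the integration by parts has been carried out — so the order of operations matters. The sharpness of the constant $\frac{q}{\gamma-1}$ I would address separately by exhibiting a near-extremal family, e.g. functions behaving like $\left|\log\left|\log\frac{eR}{|x|}\right|\right|^{\frac{\gamma-1}{q}+\varepsilon}$ truncated to lie in $C_0^\infty$, and letting $\varepsilon\to 0$; this is the standard mechanism for logarithmic Hardy inequalities and I expect it to go through once the main estimate is in place.
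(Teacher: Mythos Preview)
Your approach is correct and essentially the same as the paper's: both reduce via polar coordinates, split the radial interval at $r=R$, integrate by parts using the antiderivative $\frac{1}{1-\gamma}|\log(\cdot)|^{1-\gamma}$, verify the boundary terms vanish thanks to $q>\gamma-1$ and $g(R)=0$, and finish with H\"older. The only cosmetic differences are that you first substitute $t=\log\frac{eR}{r}$ and prove the one-dimensional inequality for each fixed $y\in\wp$ before integrating over the sphere, whereas the paper works directly in $r$ and carries the $\wp$-integral along throughout; also note that Proposition~\ref{SLZ01} itself does not assert sharpness (that is handled later in Theorem~\ref{SLZ0}), so your discussion of a near-extremal family is not needed here.
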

\begin{proof}[Proof of Proposition \ref{SLZ01}]
First of all we consider the integrals in \eqref{SLZ2} restricted to $B(0,R)$. Using polar coordinates and integration by parts, we obtain
\begin{multline*}
\int_{B(0,R)}\frac{|f-f_{R}|^{q}}{\left|\log\left|\log\frac{e R}{|x|}\right|\right|^{\gamma}\left|\log\frac{e R}{|x|}\right|}\frac{dx}{|x|^{Q}}\\
=\int_{0}^{R}\frac{1}{r(\log\frac{e R}{r})(\log(\log \frac{e R}{r}))^{\gamma}}\int_{\wp}|f(r y)-f(R y)|^{q}d\sigma(y)dr\\
=\frac{1}{\gamma-1}\left[\left(\log\left(\log\frac{e R}{r}\right)\right)^{-\gamma+1}\int_{\wp}|f(r y)-f(R y)|^{q}d\sigma(y)\right]^{r=R}_{r=0}\\
-\frac{1}{\gamma-1}\int_{0}^{R}\left(\log\left(\log\frac{e R}{r}\right)\right)^{-\gamma+1}
\frac{d}{dr}\int_{\wp}|f(r y)-f(R y)|^{q}d\sigma(y)dr\\
=-\frac{q}{\gamma-1}\int_{0}^{R}\left(\log\left(\log\frac{e R}{r}\right)\right)^{-\gamma+1}
{\rm Re}\int_{\wp}|f(r y)-f(R y)|^{q-2}\\
\times(f(r y)-f(R y))\overline{\frac{df(ry)}{dr}}d\sigma(y)dr,
\end{multline*}
where $\sigma$ is the Borel measure on $\wp, q-\gamma+1>0$, so that the boundary term at $r=R$ vanishes due to inequalities
\begin{multline*}\log\left(\log \frac{e R}{r}\right)=\int_{1}^{\log \frac{e R}{r}}\frac{dt}{t}\geq \frac{\log\frac{e R}{r}-1}{\log\frac{e R}{r}}=\frac{\log\frac{R}{r}}{\log\frac{e R}{r}}\\
=\frac{1}{\log\frac{e R}{r}}\int_{1}^{\frac{R}{r}}\frac{dt}{t}\geq\frac{1}{\log\frac{e R}{r}}\frac{\frac{R}{r}-1}{\frac{R}{r}}=\frac{R-r}{R \log \frac{e R}{r}}
\end{multline*}
and
$$|f(ry)-f(Ry)|\leq C (R-r)$$
for $0<r\leq R$.
It follows that
\begin{multline*}
\int_{0}^{R}\frac{1}{r(\log\frac{e R}{r})(\log(\log \frac{e R}{r}))^{\gamma}}\int_{\wp}|f(r y)-f(R y)|^{q}d\sigma(y)dr\\
\leq \frac{q}{\gamma-1}\int_{0}^{R}\frac{1}{(\log(\log\frac{e R}{r}))^{\gamma-1}}\int_{\wp}|f(r y)-f(R y)|^{q-1}\left|\frac{df(r y)}{dr}\right|d\sigma(y)dr\\
=\frac{q}{\gamma-1}\int_{0}^{R}\frac{1}
{r^{\frac{q-1}{q}}(\log\frac{e R}{r})^{\frac{q-1}{q}} r^{-\frac{q-1}{q}}(\log\frac{e R}{r})^{-\frac{q-1}{q}}}\\
\times \frac{1}{(\log(\log\frac{e R}{r}))^{\frac{(q-1)\gamma}{q}}
(\log(\log \frac{e R}{r}))^{\frac{\gamma-q}{q}}}\\
\times\int_{\wp}|f(r y)-f(R y)|^{q-1}\left|\frac{df(ry)}{dr}\right|d\sigma(y)dr.
\end{multline*}
By the H\"{o}lder inequality, we obtain
\begin{multline*}
\int_{0}^{R}\frac{1}{r(\log\frac{e R}{r})(\log(\log \frac{e R}{r}))^{\gamma}}\int_{\wp}|f(r y)-f(R y)|^{q}d\sigma(y)dr\\
\leq \frac{q}{\gamma-1} \left(\int_{0}^{R} \int_{\wp}
\frac{|f(r y)-f(R y)|^{q}}{r(\log \frac{e R}{r})(\log(\log \frac{e R}{r}))^{\gamma}}d\sigma(y)dr\right)^\frac{q-1}{q}\\
\times \left(\int_{0}^{R} \int_{\wp}r^{q-1}\left(\log \frac{e R}{r}\right)^{q-1}\left(\log\left(\log\frac{e R}{r}\right)\right)^{q-\gamma}\left|\frac{df(ry)}{dr}\right|^{q}
d\sigma(y)dr\right)^\frac{1}{q}.
\end{multline*}
This implies that
\begin{multline}\label{SLZ3}
\left(\int_{B(0,R)}\frac{|f-f_{R}|^{q}}{\left|\log\left|\log\frac{e R}{|x|}\right|\right|^{\gamma}
\left|\log\frac{e R}{|x|}\right|}\frac{dx}{|x|^{Q}}\right)^\frac{1}{q}\\
\leq\frac{q}{\gamma-1}\left(\int_{B(0,R)}|x|^{q-Q}
\left|\log\frac{e R}{|x|}\right|^{q-1}\left|\log\left|\log\frac{e R}{|x|}\right|\right|^{q-\gamma}\left|\frac{1}{|x|}\mathbb{E}f\right|
^{q}dx\right)^{\frac{1}{q}}.
\end{multline}
Now we consider the integrals in \eqref{SLZ2} restricted on $B(0,e R) \backslash B(0,R)$.
\begin{multline*}
\int_{B(0,e R)\backslash B(0,R)}\frac{|f-f_{R}|^{q}}{\left|\log\left|\log\frac{e R}{|x|}\right|\right|^{\gamma}
\left|\log\frac{e R}{|x|}\right|}\frac{dx}{|x|^{Q}}\\
=\int_{R}^{eR}\frac{1}{r(\log\frac{e R}{r})(\log((\log \frac{e R}{r})^{-1}))^{\gamma}}\int_{\wp}|f(r y)-f(R y)|^{q}d\sigma(y)dr\\
=-\frac{1}{\gamma-1}\left[\left(\log\left(\left(\log\frac{e R}{r}\right)^{-1}\right)\right)^{-\gamma+1}\int_{\wp}|f(r y)-f(R y)|^{q}d\sigma(y)\right]^{r=eR}_{r=R}\\
+\frac{1}{\gamma-1}\int_{R}^{eR}\left(\log\left(\left(\log\frac{e R}{r}\right)^{-1}\right)\right)^{-\gamma+1}
\frac{d}{dr}\int_{\wp}|f(r y)-f(R y)|^{q}d\sigma(y)dr\\
=\frac{q}{\gamma-1}\int_{R}^{eR}\left(\log\left(\left(\log\frac{e R}{r}\right)^{-1}\right)\right)^{-\gamma+1}
{\rm Re}\int_{\wp}|f(r y)-f(R y)|^{q-2}\\
\times(f(r y)-f(R y))\overline{\frac{df(ry)}{dr}}d\sigma(y)dr.
\end{multline*}
Here $\sigma$ is the Borel measure on $\wp, q-\gamma+1>0$, so that the boundary term at $r=R$ vanishes due to inequalities
\begin{multline*}
\log\left(\left(\log \frac{e R}{r}\right)^{-1}\right)=\int_{1}^{\left(\log \frac{e R}{r}\right)^{-1}}\frac{dt}{t} \\ \geq \left(\log\frac{e R}{r}\right)\left(\left(\log\frac{e R}{r}\right)^{-1}-1\right)=1-\log\frac{e R}{r}\geq\frac{r-R}{R}
\end{multline*}
and
$$|f(ry)-f(Ry)|\leq C (R-r)$$
for $R\leq r \leq eR$.
It follows that
\begin{multline*}
\int_{R}^{eR}\frac{1}{r(\log\frac{e R}{r})(\log((\log \frac{e R}{r})^{-1}))^{\gamma}}\int_{\wp}|f(r y)-f(R y)|^{q}d\sigma(y)dr\\
\leq \frac{q}{\gamma-1}\int_{R}^{eR}\left(\log\left(\left(\log\frac{e R}{r}\right)^{-1}\right)\right)^{-\gamma+1}\\
\times\int_{\wp}|f(r y)-f(R y)|^{q-1}\left|\frac{df(ry)}{dr}\right|d\sigma(y)dr
\\=\frac{q}{\gamma-1}
\int_{R}^{eR}\frac{1}
{r^{\frac{q-1}{q}}(\log\frac{e R}{r})^{\frac{q-1}{q}}
r^{-\frac{q-1}{q}}(\log\frac{e R}{r})^{-\frac{q-1}{q}}}
\\
\times \frac{1}{(\log((\log\frac{e R}{r})^{-1}))^{\frac{(q-1)\gamma}{q}}
(\log((\log \frac{e R}{r})^{-1}))^{\frac{\gamma-q}{q}}}\\
\times\int_{\wp}|f(r y)-f(R y)|^{q-1}\left|\frac{df(ry)}{dr}\right|d\sigma(y)dr.
\end{multline*}
By the H\"{o}lder inequality, we obtain
\begin{multline*}
\int_{R}^{eR}\frac{1}{r(\log\frac{e R}{r})(\log((\log \frac{e R}{r})^{-1}))^{\gamma}}\int_{\wp}|f(r y)-f(R y)|^{q}d\sigma(y)dr\\
\leq \frac{q}{\gamma-1} \left(\int_{R}^{eR} \int_{\wp}
\frac{|f(r y)-f(R y)|^{q}}{r(\log \frac{e R}{r})(\log((\log \frac{e R}{r})^{-1}))^{\gamma}}d\sigma(y)dr\right)^\frac{q-1}{q}\\
\times \left(\int_{R}^{eR} \int_{\wp}r^{q-1}\left(\log \frac{e R}{r}\right)^{q-1}\left(\log\left(\left(\log\frac{e R}{r}\right)^{-1}\right)\right)^{q-\gamma} \right.\\\left.
\times\left|\frac{df(r y)}{dr}\right|^{q}
d\sigma(y)dr\right)^{\frac{1}{q}}.
\end{multline*}
This implies that
\begin{multline}\label{SLZ4}
\left(\int_{B(0,e R) \backslash B(0,R)}\frac{|f-f_{R}|^{q}}{\left|\log\left|\log\frac{e R}{|x|}\right|\right|^{\gamma}
\left|\log\frac{e R}{|x|}\right|}\frac{dx}{|x|^{Q}}\right)^\frac{1}{q}
\leq\frac{q}{\gamma-1}\\\times \left(\int_{B(0,e R)\backslash B(0,R)}|x|^{q-Q}
\left|\log\frac{e R}{|x|}\right|^{q-1}\left|\log\left|\log\frac{e R}{|x|}\right|\right|^{q-\gamma}\left|\frac{1}{|x|}\mathbb{E}f\right|
^{q}dx\right)^{\frac{1}{q}}.
\end{multline}
The inequalities \eqref{SLZ3} and \eqref{SLZ4} imply \eqref{SLZ2}.
\end{proof}
Similarly, one can prove a dual inequality of \eqref{SLZ2} stated as follows.
\begin{prop} \label{SLZ04} Let $\mathbb{G}$ be a homogeneous group of homogeneous dimension $Q$ and
let $|\cdot|$ be any homogeneous quasi-norm on $\mathbb{G}$. Let $1<\gamma<\infty$ and $\max\{1,\gamma-1\}<q<\infty$. Then for all $f\in C_{0}^{\infty}(\mathbb{G})$ and for any $R>0$, the following inequality holds
\begin{multline}\label{SLZ05}
\left(\int_{B^{c}(0,R)}\frac{|f-f_{eR}|^{q}}{\left|\log\left|\log\frac{ R}{|x|}\right|\right|^{\gamma}
\left|\log\frac{R}{|x|}\right|}\frac{dx}{|x|^{Q}}\right)^{\frac{1}{q}}\\
\leq\frac{q}{\gamma-1}\left(\int_{B^{c}(0,R)}|x|^{q-Q}
\left|\log\frac{R}{|x|}\right|^{q-1}\left|\log\left|\log\frac{ R}{|x|}\right|\right|^{q-\gamma}\left|\frac{1}{|x|}\mathbb{E}f\right|
^{q}dx\right)^{\frac{1}{q}}.
\end{multline}
\end{prop}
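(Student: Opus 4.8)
The plan is to mirror the proof of Proposition \ref{SLZ01}; the structural reason the dual estimate \eqref{SLZ05} holds is the radial inversion $r\mapsto eR^{2}/r$, which carries $B^{c}(0,R)$ onto $B(0,eR)$ and interchanges the two logarithmic singularities of the integrand (those at $|x|=R$ and $|x|=eR$). Accordingly, I would split the domain as $B^{c}(0,R)=\bigl(B(0,eR)\setminus B(0,R)\bigr)\cup B^{c}(0,eR)$ and handle each piece exactly as the annulus $B(0,R)$ and the rest of $B(0,eR)$ were handled in the proof of Proposition \ref{SLZ01}. On $B^{c}(0,R)$ one has $\left|\log(R/|x|)\right|=\log(|x|/R)$, and moreover $\left|\log\left|\log(R/|x|)\right|\right|=\log\left((\log(|x|/R))^{-1}\right)$ on the annulus $R<|x|<eR$ (where $\log(|x|/R)\in(0,1)$), while $\left|\log\left|\log(R/|x|)\right|\right|=\log\log(|x|/R)$ on $B^{c}(0,eR)$ (where $\log(|x|/R)>1$). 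This is precisely the dichotomy appearing in the proof of Proposition \ref{SLZ01}, with the roles of the two endpoints swapped.

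First I would pass to polar coordinates via \eqref{EQ:polar} on each piece, abbreviating $\Phi(r):=\int_{\wp}|f(ry)-f(eRy)|^{q}\,d\sigma(y)$. On the annulus the radial weight $\bigl(r\log(r/R)\,(\log((\log(r/R))^{-1}))^{\gamma}\bigr)^{-1}$ is the $r$-derivative of $\frac{1}{\gamma-1}\bigl(\log((\log(r/R))^{-1})\bigr)^{1-\gamma}$, and on $(eR,\infty)$ the radial weight $\bigl(r\log(r/R)\,(\log\log(r/R))^{\gamma}\bigr)^{-1}$ is the $r$-derivative of $-\frac{1}{\gamma-1}\bigl(\log\log(r/R)\bigr)^{1-\gamma}$. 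Integrating by parts in $r$ against $\Phi$ on each piece produces the constant $\frac{q}{\gamma-1}$ together with the term ${\rm Re}\int_{\wp}|f-f_{eR}|^{q-2}(f-f_{eR})\overline{\frac{df(ry)}{dr}}\,d\sigma(y)$; bounding this in absolute value by $\int_{\wp}|f-f_{eR}|^{q-1}\bigl|\frac{df(ry)}{dr}\bigr|\,d\sigma(y)$ and then applying H\"older's inequality in $d\sigma(y)\,dr$ with exponents $\frac{q}{q-1}$ and $q$, splitting the remaining logarithmic weight exactly as in the proof of Proposition \ref{SLZ01}, reproduces \eqref{SLZ05} with all integrals restricted to that region. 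Summing the two one-piece inequalities and taking a $q$-th root then yields \eqref{SLZ05} in full.

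The one point that needs genuine (if still routine) care is the vanishing of the boundary terms in the integration by parts, and here the geometry is the mirror image of that in Proposition \ref{SLZ01}: at $r=eR$ the relevant antiderivative blows up while $\Phi(eR)=0$, so one has an indeterminate product. This is controlled by the elementary inequality $\log s\ge 1-s^{-1}$, which gives $\log\left((\log(r/R))^{-1}\right)\ge (eR-r)/(eR)$ for $R<r<eR$ and $\log\log(r/R)\ge c\,(r-eR)$ for $r$ slightly larger than $eR$ (with $c>0$ depending only on $R$); combined with $|f(ry)-f(eRy)|\le C|r-eR|$, valid since $f\in C_{0}^{\infty}(\mathbb{G})$, the boundary term is then $O\bigl(|r-eR|^{q-\gamma+1}\bigr)$, which tends to $0$ because $q-\gamma+1>0$ under the hypothesis $q>\max\{1,\gamma-1\}$. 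At $r=R$ the antiderivative itself tends to $0$ while $\Phi$ stays bounded, and at $r=\infty$ the antiderivative tends to $0$ while $\Phi$ is bounded (indeed eventually constant, since $f$ has compact support), so those boundary terms vanish with no extra effort. Beyond this bookkeeping I do not anticipate any obstacle: the whole argument is word-for-word the analogue, under the correspondence $r\leftrightarrow eR^{2}/r$, of the proof of Proposition \ref{SLZ01}.
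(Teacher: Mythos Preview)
Your proposal is correct and is exactly the route the paper takes: the paper does not spell out a proof of Proposition~\ref{SLZ04} at all but simply says it is proved ``similarly'' to Proposition~\ref{SLZ01}, which is precisely the mirror argument (split $B^{c}(0,R)$ into the annulus $R<|x|<eR$ and the exterior $|x|>eR$, integrate by parts in $r$ against the same logarithmic antiderivatives, control the boundary term at $r=eR$ via $q-\gamma+1>0$, and finish with H\"older) that you have outlined. Your handling of the boundary terms and the elementary logarithmic estimates is accurate; nothing further is needed.
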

Now let us prove \eqref{SLZ1} in Theorem \ref{SLZ0}.
\begin{proof}[Proof of Theorem \ref{SLZ0}] Using \eqref{SLZ05} with $R$ replaced by $e R$, we have
\begin{multline}\label{SLZ6}
\left(\int_{B^{c}(0,e R)}\frac{|f-f_{e^{2}R}|^{q}}{\left|\log\left|\log\frac{ e R}{|x|}\right|\right|^{\gamma}
\left|\log\frac{e R}{|x|}\right|}\frac{dx}{|x|^{Q}}\right)^{\frac{1}{q}}\\
\leq\frac{q}{\gamma-1}\left(\int_{B^{c}(0,e R)}|x|^{q-Q}
\left|\log\frac{e R}{|x|}\right|^{q-1}\left|\log\left|\log\frac{ e R}{|x|}\right|\right|^{q-\gamma}\left|\frac{1}{|x|}\mathbb{E}f\right|
^{q}dx\right)^{\frac{1}{q}}.
\end{multline}
Then from \eqref{SLZ2} and \eqref{SLZ6}, we obtain \eqref{SLZ1} for $f\in C_{0}^{\infty}(\mathbb{G})$.

Now we prove \eqref{SLZ1} for $f\in W^{1}_{0}L_{Q,q,\frac{q-1}{q},
\frac{q-\gamma}{q}}(\mathbb{G})$. We show first that \eqref{SLZ2} holds for $f\in W^{1}_{0}L_{Q,q,\frac{q-1}{q},
\frac{q-\gamma}{q}}(\mathbb{G})$. Let $\{f_{m}\}\subset C_{0}^{\infty}(\mathbb{G})$ be a sequence such that $f_{m}\rightarrow f$ in $W^{1}_{0}L_{Q,q,\frac{q-1}{q},
\frac{q-\gamma}{q}}(\mathbb{G})$ as $m\rightarrow \infty$ and almost everywhere by the definition \eqref{SLZdef1}. If we define
$$f_{R,m}(x):=\frac{f_{m}(x)-f_{m}(R\frac{x}{|x|})}{\left|\log\left|\log\frac{e R}{|x|}\right|\right|^{\frac{\gamma}{q}}\left|\log\frac{e R}{|x|}\right|^{\frac{1}{q}}},$$
then $\{f_{R,m}\}_{m \in \mathbb{N}}$ is a Cauchy sequence in $L^{q}(\mathbb{G};dx/|x|^{Q})$, which is a weighted Lebesgue space, since the inequality \eqref{SLZ2} holds for $f_{m}-f_{k}\in C_{0}^{\infty}(\mathbb{G})$. Consequently, there exists $g_{R}\in L^{q}(\mathbb{G};dx/|x|^{Q})$ such that $f_{R,m}\rightarrow g_{R}$ in $L^{q}(\mathbb{G};dx/|x|^{Q})$ as $m\rightarrow \infty$. From
\begin{multline*}
\left\{x \in \mathbb{G}\backslash\{0\}:f_{m}\left(R\frac{x}{|x|}\right)\not \rightarrow
 f\left(R\frac{x}{|x|}\right)\right\}\\
\subset \bigcup_{r>0}\left\{x \in \mathbb{G}\backslash\{0\}: f_{m}\left(r\frac{x}{|x|}\right)\not \rightarrow
 f\left(r\frac{x}{|x|}\right)\right\}\\ =\{x \in \mathbb{G}\backslash\{0\}:f_{m}(x)\not \rightarrow f(x)\},
 \end{multline*}
it follows that $f_{m}\left(R\frac{x}{|x|}\right)\rightarrow
 f\left(R\frac{x}{|x|}\right)$, that is, almost everywhere we obtain
 $$\frac{f(x)-f\left(R\frac{x}{|x|}\right)}{\left|\log\left|\log\frac{e R}{|x|}\right|\right|^{\frac{\gamma}{q}}\left|\log\frac{e R}{|x|}\right|^{\frac{1}{q}}}=g_{R}(x).$$
That is the inequality \eqref{SLZ2} holds for all $f\in W^{1}_{0}L_{Q,q,\frac{q-1}{q},
\frac{q-\gamma}{q}}(\mathbb{G})$.
In a similar way we obtain inequality \eqref{SLZ6} for all $f\in W^{1}_{0}L_{Q,q,\frac{q-1}{q},
\frac{q-\gamma}{q}}(\mathbb{G})$. Since the inequalities \eqref{SLZ2} and \eqref{SLZ6} hold for all $f\in W^{1}_{0}L_{Q,q,\frac{q-1}{q},
\frac{q-\gamma}{q}}(\mathbb{G})$, we obtain the inequality \eqref{SLZ1} for $f\in W^{1}_{0}L_{Q,q,\frac{q-1}{q},
\frac{q-\gamma}{q}}(\mathbb{G})$.

Now let us prove the sharpness of the constant $\frac{q}{\gamma-1}$ in \eqref{SLZ1}.
From \eqref{SLZ1} for all $f\in W^{1}_{0}L_{Q,q,\frac{q-1}{q},\frac{q-\gamma}{q}}(B(0,R))$, one obtains
\begin{multline}\label{SLZ7}
\left(\int_{B(0,R)}\frac{|f(x)|^{q}}{\left|\log\left|\log\frac{ e R}{|x|}\right|\right|^{\gamma}
\left|\log
\frac{e R}{|x|}\right|}\frac{dx}{|x|^{Q}}\right)^{\frac{1}{q}}\\
\leq\frac{q}{\gamma-1}\left(\int_{B(0,R)}|x|^{q-Q}
\left|\log\frac{e R}{|x|}\right|^{q-1}\left|\log\left|\log\frac{ e R}{|x|}\right|\right|^{q-\gamma}\left|\frac{1}{|x|}\mathbb{E}f\right|
^{q}dx\right)^{\frac{1}{q}}.
\end{multline}
Therefore, it is enough to show the sharpness of the constant $\frac{q}{\gamma-1}$ in \eqref{SLZ7}. As in the abelian case (see \cite[Section 3]{MOW15c}), we consider a sequence of functions $\{f_{\ell}\}$ for large $\ell\in \mathbb{N}$ defined by
$$f_{\ell}(x):=\begin{cases}
(\log(\log(\ell eR)))^{\frac{\gamma-1}{q}}, \;\;\;{\rm when}\;\;\;|x|\leq\frac{1}{\ell},\\
\left(\log\left(\log \frac{eR}{|x|} \right)\right)^{\frac{\gamma-1}{q}}, \;\;\;{\rm when}\;\;\;\frac{1}{\ell}\leq|x|\leq\frac{R}{2},\\
(\log(\log(2e)))^{\frac{\gamma-1}{q}}\frac{2}{R}(R-|x|), \;\;\;{\rm when}\;\;\;\frac{R}{2}\leq|x|\leq R.
\end{cases}$$
It is easy to see that $f_{\ell}\in W^{1}_{0}L_{Q,q,\frac{q-1}{q},
\frac{q-\gamma}{q}}(B(0,R))$. Letting $\widetilde{f}_{\ell}(r):=f_{\ell}(x)$ with $r=|x|\geq0$, we obtain
$$\frac{d}{dr}\widetilde{f}_{\ell}(r)=\begin{cases}
0, \;\;\;{\rm when}\;\;\;r<\frac{1}{\ell},\\
-\frac{\gamma-1}{q}r^{-1}\left(\log\left(\log\frac{eR}{r}\right)\right)^{\frac{\gamma-1}{q}-1}
(\log\frac{eR}{r})^{-1}, \;\;\;{\rm when}\;\;\;\frac{1}{\ell}<r<\frac{R}{2},\\
-\frac{2}{R}(\log(\log(2e)))^{\frac{\gamma-1}{q}}, \;\;\;{\rm when}\;\;\;\frac{R}{2}< r<R.
\end{cases}$$
Denoting by $|\wp|$ the $Q-1$ dimensional surface measure of the unit sphere,
by a direct calculation we have
\begin{multline*}
\int_{B(0,R)}|x|^{q-Q}
\left|\log\frac{e R}{|x|}\right|^{q-1}\left|\log\left|\log\frac{ e R}{|x|}\right|\right|^{q-\gamma}\left|\frac{1}{|x|}\mathbb{E}f_{\ell}(x)\right|
^{q}dx\\
=|\wp|\int_{0}^{R}r^{q-1}\left|\log\frac{e R}{r}\right|^{q-1}\left|\log\left|\log\frac{ e R}{r}\right|\right|^{q-\gamma}\left|\frac{d}{dr}\widetilde{f}_{\ell}(r)\right|^{q}dr\\
=|\wp|\left(\frac{\gamma-1}{q}\right)^{q}\int_{\frac{1}{\ell}}^{\frac{R}{2}}r^{-1}\left(\log\frac{e R}{r}\right)^{-1}\left(\log\left(\log\frac{ e R}{r}\right)\right)^{-1}dr\\
+(\log(\log(2e)))^{\gamma-1}\left(\frac{2}{R}\right)^{q}|\wp|
\int_{\frac{R}{2}}^{R}r^{q-1}\left(\log\frac{e R}{r}\right)^{q-1}\left(\log\left(\log\frac{ e R}{r}\right)\right)^{q-\gamma}dr\\=
-|\wp|\left(\frac{\gamma-1}{q}\right)^{q}\int_{\frac{1}{\ell}}^{\frac{R}{2}}\frac{d}{dr}\left(\log\left(\log\left(\log \frac{ e R}{r}\right)\right)\right)
\\+(\log(\log(2e)))^{\gamma-1}\left(\frac{2}{R}\right)^{q}|\wp|
\int_{\frac{R}{2}}^{R}r^{q-1}\left(\log\frac{e R}{r}\right)^{q-1}\left(\log\left(\log\frac{ e R}{r}\right)\right)^{q-\gamma}dr\\
\end{multline*}
\begin{equation} \label{SLZ8}
=:|\wp|\left(\frac{\gamma-1}{q}\right)^{q}(\log(\log(\log \ell e R))-\log(\log(\log 2 e)))+|\wp|C_{\gamma,q},
\end{equation}
where $$C_{\gamma,q}:=(2 e)^{q}(\log(\log(2e)))^{\gamma-1}\int_{0}^{(\log(\log(2e)))}s^{q-\gamma}
e^{q(s-e^{s})}ds.
$$
By the assumption $q-\gamma+1>0$, we get $C_{\gamma,q}<+\infty$. On the other hand, we see
\begin{multline*}
\int_{B(0,R)}\frac{|f_{\ell}(x)|^{q}}{\left|\log\left|\log\frac{ e R}{|x|}\right|\right|^{\gamma}
\left|\log\frac{e R}{|x|}\right|}\frac{dx}{|x|^{Q}}
=|\wp|\int_{0}^{R}\frac{|\widetilde{f}_{\ell}(r)|^{q}}{\left|\log\left|\log\frac{ e R}{r}\right|\right|^{\gamma}
\left|\log\frac{e R}{r}\right|}\frac{dr}{r}\\
=|\wp|(\log(\log(\ell eR)))^{\gamma-1}
\int_{0}^{\frac{1}{\ell}}r^{-1}
\left(\log\frac{e R}{r}\right)^{-1}\left(\log\left(\log\frac{ e R}{r}\right)\right)^{-\gamma}dr\\
+|\wp|\int_{\frac{1}{\ell}}^{\frac{R}{2}}r^{-1}
\left(\log\frac{e R}{r}\right)^{-1}\left(\log\left(\log\frac{ e R}{r}\right)\right)^{-1}dr\\
+|\wp|(\log(\log(2e)))^{\gamma-1}\left(\frac{2}{R}\right)^{q}
\int_{\frac{R}{2}}^{R}r^{-1}
(R-r)^{q}\left(\log\frac{e R}{r}\right)^{-1}\\ \times \left(\log\left(\log\frac{ e R}{r}\right)\right)^{-\gamma}dr
\end{multline*}
\begin{equation}\label{SLZ9}
=:\frac{|\wp|}{\gamma-1}+|\wp|(\log(\log(\log(\ell eR))-\log(\log(\log(2e)))
+|\wp|C_{R,\gamma,q},
\end{equation}
where
\begin{multline*}
C_{R,\gamma,q}:=(\log(\log(2e)))^{\gamma-1}\left(\frac{2}{R}\right)^{q}\\
\times\int_{\frac{R}{2}}^{R}r^{-1}
(R-r)^{q}\left(\log\frac{e R}{r}\right)^{-1}\left(\log\left(\log\frac{ e R}{r}\right)\right)^{-\gamma}dr.
\end{multline*}
The inequality $\log(\log\frac{eR}{r})\geq \frac{R-r}{R}$ for all $r\leq R$ and the assumption $q-\gamma>-1$, imply $C_{R,\gamma,q}<+\infty$. Then, by \eqref{SLZ8} and \eqref{SLZ9}, we have
\begin{multline*}
\int_{B(0,R)}|x|^{q-Q}
\left|\log\frac{e R}{|x|}\right|^{q-1}\left|\log\left|\log\frac{ e R}{|x|}\right|\right|^{q-\gamma}\left|\frac{1}{|x|}\mathbb{E}f_{\ell}\right|
^{q}dx\\
\times \left(\int_{B(0,R)}\frac{|f_{\ell}(x)|^{q}}{\left|\log\left|\log\frac{ e R}{|x|}\right|\right|^{\gamma}
\left|\log\frac{e R}{|x|}\right|}\frac{dx}{|x|^{Q}}\right)^{-1}\rightarrow \left(\frac{\gamma-1}{q}\right)^{q}
\end{multline*}
as $\ell \rightarrow \infty$, which implies that the constant $\frac{q}{\gamma-1}$ in \eqref{SLZ7} is sharp.
\end{proof}


\end{document}